\numberwithin{equation}{section}
\let\Re=\undefined\DeclareMathOperator*{\Re}{Re}
\let\Im=\undefined\DeclareMathOperator*{\Im}{Im}
\newcommand{\R}{\mathbb{R}}
\newcommand{\C}{\mathbb{C}}
\newcommand{\Z}{\mathbb{Z}}
\newcommand{\HH}{\mathcal{H}}
\newtheorem{theorem}{Theorem}[section]
\newtheorem{lemma}[theorem]{Lemma}
\newtheorem{proposition}[theorem]{Proposition}
\theoremstyle{definition}
\newtheorem{definition}[theorem]{Definition}
\newtheorem{remark}[theorem]{Remark}
\newcommand{\Extend}[5]{\ext@arrow0099{\arrowfill@#1#2#3}{#4}{#5}}
\begin{document}
\title[wave equation in magnetic fields]{Decay estimates for one Aharonov-Bohm solenoid in a uniform magnetic field II: wave equation}

\author{Haoran Wang}
\address{Department of Mathematics, Beijing Institute of Technology, Beijing 100081;}
\email{wanghaoran@bit.edu.cn}

\author{Fang Zhang}
\address{Department of Mathematics, Beijing Institute of Technology, Beijing 100081;}
\email{zhangfang@bit.edu.cn}

\author{Junyong Zhang}
\address{Department of Mathematics, Beijing Institute of Technology, Beijing 100081;}
\email{zhang\_junyong@bit.edu.cn}

\begin{abstract}
This is the second of a series of papers in which we investigate the decay estimates for dispersive equations with Aharonov-Bohm solenoids in a uniform magnetic field.
In our first starting paper \cite{WZZ}, we have studied the Strichartz estimates for Schr\"odinger equation with one Aharonov-Bohm solenoid  in a uniform magnetic field. The wave equation in this setting becomes more delicate since
a difficulty is raised from the square root of the eigenvalue of the Schr\"odinger operator $H_{\alpha, B_0}$ so that we cannot directly construct the half-wave propagator.
An independent interesting result concerning the Gaussian upper bounds of the heat kernel is proved by using two different methods. The first one is based on establishing Davies-Gaffney inequality in this setting
and the second one is straightforward to construct the heat kernel (which efficiently captures the magnetic effects) based on the Schulman-Sunada formula.
As byproducts, we prove optimal bounds for the heat kernel and show the Bernstein inequality and the square function inequality for Schr\"odinger operator with one Aharonov-Bohm solenoid in a uniform magnetic field.
\end{abstract}

\maketitle

\begin{center}
 \begin{minipage}{120mm}
   { \small {\bf Key Words:   Strichartz estimates, Davies-Gaffney inequality, wave equation, Aharonov-Bohm solenoids, uniform magnetic field}
      {}
   }\\
    { \small {\bf AMS Classification:}
      { 42B37, 35Q40.}
      }
 \end{minipage}
 \end{center}

\section{Introduction}

In this paper, as a sequence of recent papers \cite{FZZ22,GYZZ22,WZZ}, we study the decay and Strichartz estimates for the wave equation on the plane pierced by one infinitesimally thin Aharonov-Bohm solenoid and subjected to a perpendicular uniform magnetic field of constant magnitude $B_0$.
More precisely, we study the wave equation
\begin{equation}\label{eq:wave}
\begin{cases}
\partial_{tt} u(t,x)+H_{\alpha,B_0} u(t,x)=0,\\
u(0,x)=u_0(x),\quad \partial_t u(0, x)=u_1(x),
\end{cases}
\end{equation}
where the magnetic Schr\"odinger operator
\begin{equation}\label{H-A}
H_{\alpha,B_0}=-(\nabla+i(A_B(x)+A_{\mathrm{hmf}}(x)))^2,
\end{equation} 
is the same as the one considered in \cite{WZZ}.
Here, $A_B(x)$ is the Aharonov-Bohm potential (initially introduced in \cite{AB59})
\begin{equation}\label{AB-potential}
A_B(x)=\alpha\Big(-\frac{x_2}{|x|^2},\frac{x_1}{|x|^2}\Big),\quad x=(x_1,x_2)\in\mathbb{R}^2\setminus\{0\},
\end{equation}
where $\alpha\in\mathbb{R}$ represents the circulation of $A_B$ around the solenoid; $A_{\mathrm{hmf}}(x)$ is given by
\begin{equation}\label{A-hmf}
A_{\mathrm{hmf}}(x)=\frac{B_0}{2}(-x_2,x_1),\quad B_0>0,
\end{equation}
which generates the background uniform magnetic field.

We stress that the model is on $\R^2$ and the magnetic field $B$ is given by
\begin{equation}\label{B-n}
B(x):=DA-DA^t,\quad B_{ij}=\frac{\partial A^i}{\partial x_j}-\frac{\partial A^j}{\partial x_i},\quad i,j=1,2.
\end{equation}
Hence, the generated magnetic field $B(x)=B_0+\alpha \delta(x)$ is actually a superposition of the uniform field and the Aharonov-Bohm field, where $\delta$ is the usual Dirac delta.
As mentioned in \cite{WZZ}, the Aharonov-Bohm potential that produces the singular magnetic field has the same homogeneity as $\nabla$ (homogenous of degree $-1$) so that the perturbation from the Aharonov-Bohm potential \eqref{AB-potential} is critical; the potential $A_{\mathrm{hmf}}(x)$ is unbounded at infinity and the uniform magnetic filed $B(x)=B_0$ from \eqref{B-n} generates a trapped well. Moreover, due to the presence of the potential \eqref{A-hmf}, the spectrum of the operator $H_{\alpha,B_0}$ consists of pure point, and thus the dispersive behavior of wave equation associated with $H_{\alpha,B_0}$ will be distinguished from the models in \cite{FZZ22,GYZZ22}.  \vspace{0.2cm}

The Hamiltonian $H_{\alpha,B_0}$ can be defined as a self-adjoint operator on $L^2$, via Friedrichs' Extension Theorem (see e.g. \cite[Thm. VI.2.1]{K} and \cite[X.3]{RS}), with a natural form domain, which in 2D turns out to be equivalent to
$$
\mathcal D(H_{\alpha,B_0})\simeq \HH^1_{\alpha,B_0}:=\left\{f\in L^2(\R^2;\C):\int_{\R^2}\left|\big(\nabla +i(A_B+A_{\mathrm{hmf}}\big)f\right|^2\,dx<+\infty\right\}.
$$
We refer to \cite[Section 2]{WZZ} for the Friedrichs' extension via quadratic forms and to \cite{ESV} for  more about the self-adjoint extension theory. In what follows and throughout, the operator $H_{\alpha,B_0}$ should be regarded as a self-adjoint operator generated by the procedure of the Friedrichs' extension. Therefore, the half-wave propagator $e^{it\sqrt{H_{\alpha,B_0}}}$ can be treated as one-parameter groups of operators on $L^2(\R^2)$.
This allows to study a large class of dispersive estimates, such as time-decay (perhaps local in time), Strichartz and local smoothing for dispersive evolutions as \eqref{eq:wave}. The validity of such properties has been central object of deep investigation of dispersive equations in the last decades, due to their relevance in the description of linear and nonlinear dynamics. To better frame our results, let us briefly sketch the state of the art about these problems.

\vspace{0.2cm}

Due to the significance of dispersive and Strichartz estimates in harmonic analysis and partial differential equations, there are too much literature to cite all of them here. But we would like to refer to \cite{BPSS,BPST,CS,DF,DFVV,EGS1,EGS2,S} and the references therein for various dispersive equations with electromagnetic potentials in mathematics and physics. The dispersive equations with the Aharonov-Bohm potential, as a diffraction physical model, have attracted more and more researchers to study from the mathematical perspective. In \cite{FFFP,FFFP1}, the authors studied the validity of the time decay estimates for the Schr\"odinger equation with the Aharonov-Bohm potential. However, due to the lack of pseudo-conformal invariance (which plays a critical role in the Schr\"odinger case), the arguments of \cite{FFFP,FFFP1} break down for the wave equation. Very recently, Fanelli, Zheng and the last author \cite{FZZ22} established Strichartz estimate for the wave equation by constructing the odd sine propagator. To solve open problems, raised in the survey \cite{Fanelli} on the dispersive estimates for other equations (e.g. Klein-Gordon, Dirac, etc.), Gao, Yin, Zheng and the last author \cite{GYZZ22} constructed the spectral measure and then applied to prove the time decay and Strichartz estimates for the Klein-Gordon equation.
The potential models in \cite{FFFP,FFFP1,FZZ22,GYZZ22} are all scaling-invariant and without unbounded (at infinity) perturbations, which is a special case of our model \eqref{H-A} (with $B_0\equiv 0$). In this paper, as \cite{WZZ}, we proceed to consider the wave equation in the magnetic fields mixed with the Aharonov-Bohm and the uniform ones.\vspace{0.2cm}


Before stating our main results, let us introduce some preliminary notations.
We define the magnetic Besov spaces as follows. Let $\varphi\in C_c^\infty(\mathbb{R}\setminus\{0\})$ satisfy $0\leq\varphi\leq 1,\text{supp}\,\varphi\subset[1/2,1]$, and
\begin{equation}\label{LP-dp}
\sum_{j\in\Z}\varphi(2^{-j}\lambda)=1,\quad \varphi_j(\lambda):=\varphi(2^{-j}\lambda), \, j\in\Z,\quad \phi_0(\lambda):=\sum_{j\leq0}\varphi(2^{-j}\lambda).
\end{equation}

\begin{definition}[Magnetic Besov spaces associated with $H_{\alpha,B_0}$]\label{def:besov} For $s\in\R$ and $1\leq p,r<\infty$, the homogeneous Besov norm $\|\cdot\|_{\dot{\mathcal{B}}^s_{p,r}(\R^2)}$ is defined by
\begin{equation}\label{Besov}
\|f\|_{\dot{\mathcal{B}}^s_{p,r}(\R^2)}=\Big(\sum_{j\in\Z}2^{jsr}\|\varphi_j(\sqrt{H_{\alpha,B_0}})f\|_{L^p(\R^2)}^r\Big)^{1/r}.
\end{equation}
In particular, for $p=r=2$, we have the Sobolev norm
\begin{equation}\label{Sobolev1}
\begin{split}
\|f\|_{\dot \HH^s_{\alpha,B_0}(\R^2)}:=\|f\|_{\dot{\mathcal{B}}^s_{2,2}(\R^2)}.
\end{split}
\end{equation}

\end{definition}

\begin{remark}  Alternatively,  the Sobolev space can be defined by
\begin{equation*}
\dot \HH^{s}_{\alpha,B_0}(\R^2):= H_{\alpha,B_0}^{-\frac s2}L^2(\R^2),
\end{equation*}
with the norm
\begin{equation}\label{Sobolev2}
\begin{split}
\|f\|_{\dot \HH^s_{\alpha,B_0}(\R^2)}&:=\big\|H_{\alpha,B_0}^{\frac s2}f\big\|_{L^2(\R^2)}.
\end{split}
\end{equation}
By the spectral theory of operators on $L^2$, the norms in \eqref{Sobolev1} and \eqref{Sobolev2} are equivalent; see Proposition \ref{prop:E-S-B} below.
\end{remark}

\begin{definition}\label{ad-pair}
A pair $(q,p)\in [2,\infty]\times [2,\infty)$ is said to be admissible, if $(q,p)$ satisfies
\begin{equation}\label{adm}
\frac{2}q\leq\frac{1}2-\frac1p.
\end{equation}
For $s\in\R$, we denote $(q,p)\in \Lambda^W_{s}$ if $(q,p)$ is admissible and satisfies
\begin{equation}\label{scaling}
\frac1q+\frac {2}p=1-s.
\end{equation}
\end{definition}
Now we state our main theorem.
\begin{theorem}\label{thm:dispersive} Let $H_{\alpha,B_0}$ be as in \eqref{H-A} and $t\in I:=[0,T]$ with any finite $T$.
 Then there exists a constant $C_T$ depending on $T$ such that
\begin{equation}\label{dis-w}
\|e^{it\sqrt{H_{\alpha,B_0}}}f\|_{L^\infty(\R^2)}\leq C_T  |t|^{-1/2}\|f\|_{\dot{\mathcal{B}}^{3/2}_{1,1}(\R^2)}, \quad t\in I, \quad  t\neq 0.
\end{equation}
Let $u(t,x)$ be the solution of \eqref{eq:wave} with initial data $(u_0,u_1)\in  \dot\HH_{\alpha,B_0}^{s}(\R^2)\times \dot\HH_{\alpha,B_0}^{s-1}(\R^2)$, then the Strichartz estimates 
\begin{equation}\label{stri:w}
\|u(t,x)\|_{L^q(I;L^p(\R^2))}\leq C_T\left(\|u_0\|_{ \dot\HH_{\alpha,B_0}^s(\R^2)}+\|u_1\|_{
\dot\HH_{\alpha,B_0}^{s-1}(\R^2)}\right)
\end{equation}
hold for $(q,p)\in \Lambda^W_{s}$ and $0\leq s<1$.
\end{theorem}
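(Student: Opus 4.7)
My strategy follows a standard two-step approach: (i) establish the frequency-localized dispersive estimate using the explicit heat kernel and then sum dyadically to obtain \eqref{dis-w}; (ii) derive the Strichartz estimate \eqref{stri:w} from the dispersive bound via the abstract Keel-Tao theorem combined with the square function inequality asserted in the paper.

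\medskip

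\noindent\emph{Step 1 (dispersive estimate).} By the definition \eqref{Besov} of the Besov norm, \eqref{dis-w} reduces by an $\ell^1\hookrightarrow\ell^\infty$ summation to the scale-uniform bound
$$
\bigl\|e^{it\sqrt{H_{\alpha,B_0}}}\varphi_j(\sqrt{H_{\alpha,B_0}})\bigr\|_{L^1(\R^2)\to L^\infty(\R^2)}\leq C\,|t|^{-1/2}\,2^{3j/2},\qquad j\in\Z,\;t\in(0,T].
$$
For $|t|\leq 2^{-j}$ this is implied by the Bernstein inequality (a consequence of the Gaussian heat kernel bound proved earlier), which gives the stronger estimate $\lesssim 2^{2j}$; so the real content is the range $|t|\geq 2^{-j}$. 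To establish this I would start from the explicit heat kernel formula derived via the Schulman-Sunada summation (which encodes the magnetic phase as a winding-number series) and represent the half-wave propagator by a complex subordination / Hadamard-type transform; the $|t|^{-1/2}$ decay should then emerge from a stationary phase analysis of the resulting oscillatory integral, with the Davies-Gaffney inequality furnishing finite speed of propagation to localize spatial integrations and to yield the local-in-time constant $C_T$.

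\medskip

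\noindent\emph{Step 2 (Strichartz estimate).} Once the frequency-localized dispersive bound is in hand, combining it with $L^2$-isometry and invoking the Keel-Tao theorem produces, for each $(q,p)$ satisfying the admissibility condition \eqref{adm} and each $j\in\Z$,
$$
\bigl\|e^{it\sqrt{H_{\alpha,B_0}}}\varphi_j(\sqrt{H_{\alpha,B_0}}) f\bigr\|_{L^q(I;L^p(\R^2))}\lesssim 2^{js}\,\bigl\|\varphi_j(\sqrt{H_{\alpha,B_0}}) f\bigr\|_{L^2(\R^2)},
$$
where the exponent $js$ is determined by scaling once the pair lies in $\Lambda^W_s$ via \eqref{scaling}. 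The square function inequality announced in the paper then permits square-summing in $j$ to convert the right-hand side into $\|f\|_{\dot\HH^s_{\alpha,B_0}}$. Writing $u(t)=\cos(t\sqrt{H_{\alpha,B_0}})u_0+\sin(t\sqrt{H_{\alpha,B_0}})H_{\alpha,B_0}^{-1/2}u_1$ and applying the same estimate to each term (with the factor $H_{\alpha,B_0}^{-1/2}$ absorbing the shift from $\dot\HH^s$ to $\dot\HH^{s-1}$ for the initial velocity) yields \eqref{stri:w}; the range $0\le s<1$ is exactly what \eqref{adm} and \eqref{scaling} together permit.

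\medskip

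\noindent\emph{Main obstacle.} The principal difficulty lies entirely in Step 1. As emphasized in the abstract, the pure-point spectrum of $H_{\alpha,B_0}$ (Landau levels modified by the Aharonov-Bohm flux) makes the phases $t\sqrt{\lambda_n}$ non-polynomial in the quantum numbers, so no Mehler-type closed-form expansion of the half-wave propagator is available; this is precisely the obstruction highlighted in the abstract, and it is what distinguishes the wave case from the Schr\"odinger case treated in \cite{WZZ}. The technical core of the proof is therefore the passage from the closed-form heat kernel to a usable representation of the wave propagator in which the $|t|^{-1/2}$ decay can be extracted uniformly in the dyadic frequency scale $2^j$, with the local-in-time constant $C_T$ reflecting the fact that global dispersion cannot hold under a pure-point spectrum.
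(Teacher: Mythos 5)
Your high-level architecture is right and matches the paper for the most part: frequency-localize, treat $2^j t\lesssim 1$ by Bernstein plus $L^2$ boundedness, treat $2^j t\gtrsim 1$ via a subordination formula, then feed the microlocalized decay into an abstract Keel--Tao theorem and square-sum via the Littlewood--Paley inequality. Step~2 is essentially identical to the paper's Section~6. But Step~1 has a genuine gap in the intended mechanism. You propose to go \emph{heat kernel} $\to$ \emph{half-wave propagator} via complex subordination and then extract $|t|^{-1/2}$ from stationary phase. The subordination used in the paper, $e^{-y\sqrt{x}}=\frac{y}{2\sqrt{\pi}}\int_0^\infty e^{-sx}e^{-y^2/(4s)}s^{-3/2}\,ds$, does not, after continuation to $y=\epsilon - it$, yield a usable integral against the heat semigroup: the essential step is the further change of variables $s=r(\epsilon-it)$, after which the integrand contains $e^{itrx}$, i.e.\ the \emph{Schr\"odinger} propagator. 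The decay is then driven by the $L^1\to L^\infty$ bound $\|e^{itH_{\alpha,B_0}}\|_{L^1\to L^\infty}\lesssim |\sin(tB_0)|^{-1}$ proved in \cite{WZZ}, combined with the explicit $(2^jt)^{1/2}$ prefactor that the subordination formula produces; the non-stationary part of the $r$-integral only contributes a Schwartz-tail remainder. Without invoking the Schr\"odinger dispersive input, stationary phase on the subordination kernel alone does not produce decay; your proposal omits this ingredient entirely.

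Two further misattributions that matter. First, Davies--Gaffney in this paper upgrades the on-diagonal heat kernel bound to off-diagonal Gaussian bounds, which are used to establish the Bernstein and square-function inequalities; it is not used as a finite-speed-of-propagation statement in the wave dispersive estimate, and finite speed of propagation plays no role in obtaining $C_T$. Second, the local-in-time constant $C_T$ and the restriction $2^{-j}|t|\leq\pi/(8B_0)$ arise because the Schr\"odinger estimate $|\sin(2^{-j}tsB_0)|^{-1}\sim(2^{-j}t)^{-1}$ is only comparable to $(2^{-j}t)^{-1}$ when $2^{-j}t$ is below a multiple of the Larmor period; the paper picks $j_0$ with $2^{-j_0}T\leq\pi/(8B_0)$ and, for $j<j_0$, falls back on Bernstein plus unitarity. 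This frequency-threshold argument, not propagation-speed localization, is what makes the estimate $T$-dependent while permitting $T$ to exceed $\pi/(2B_0)$.
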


\begin{remark} The local-in-time decay estimate \eqref{dis-w} is quite different from the Schr\"odinger counterpart
(see \cite[Theorem 1.1]{WZZ})
\begin{equation*}
\begin{split}
&\big\|e^{itH_{\alpha,B_0}} f\big\|_{L^\infty(\R^2)}\leq C |\sin(tB_0)|^{-1}\big\| f\big\|_{L^1\R^2)},\quad t\neq \frac{k\pi}{B_0}, \,k\in \Z,
\end{split}
\end{equation*}
which is similar to the harmonic oscillators (see Koch and Tataru \cite{KT05}). The period $\pi/B_0$ is essentially the Larmor period. However, for the wave equation,
provided that the data $f=\varphi(2^{-j}\sqrt{H_{\alpha,B_0}})f$ is localized at frequency scale $2^j$ with $j\in\Z$, we can prove (see \eqref{<1} below)
\begin{equation*}
\begin{split}
\big\|\varphi(2^{-j}\sqrt{H_{\alpha,B_0}})&e^{it\sqrt{H_{\alpha,B_0}}}f\big\|_{L^\infty(\R^2)}\\&\lesssim 2^{2j}\big(1+2^jt\big)^{-N}\|\varphi(2^{-j}\sqrt{H_{\alpha,B_0}}) f\|_{L^1(\R^2)}, \quad\text{for}\quad 2^j t\lesssim 1
\end{split}
\end{equation*}
and (see \eqref{est: mic-decay1} below)
\begin{equation*}
\begin{split}
&\big\|\varphi(2^{-j}\sqrt{H_{\alpha,B_0}})e^{it\sqrt{H_{\alpha,B_0}}}f\big\|_{L^\infty(\R^2)}\\&\lesssim 2^{2j}\big(1+2^jt\big)^{-\frac12}\|\varphi(2^{-j}\sqrt{H_{\alpha,B_0}}) f\|_{L^1(\R^2)},
\quad\text{for}\quad 2^j t\lesssim 1,\quad
 2^{-j}|t|\leq \frac{\pi}{8B_0}.
\end{split}
\end{equation*}
The decay estimates for waves depend on the frequency.
The Strichartz estimates \eqref{stri:w} is still local-in-time but the endpoint of the time interval $T$ can beyond $\frac{\pi}{2B_0}$ which is the upper bound of $T$ for Schr\"odinger's Strichartz estimates.
 Due to the unbounded potentials caused a trapped well, the Strichartz estimate is impossible to be global-in-time (for example, the Strichartz estimates for dispersive equations on sphere or torus), but still captures integration regularity behavior near $t=0$.
\end{remark}
Now let us figure out some points in our proof.\vspace{0.2cm}

\begin{itemize}

\item As mentioned above,  for the Schr\"odinger equation considered in \cite{WZZ}, the explicit eigenvalues and eigenfunctions of the operator $H_{\alpha,B_0}$ are the key ingredients. In particular, the eigenvalues are given by
\begin{equation*}
\lambda_{k,m}=(2m+1+|k+\alpha|)B_0+(k+\alpha)B_0,\quad m,\,k\in\mathbb{Z},\, m\geq0,
\end{equation*}
see \eqref{eigen-v} below. One feature of $\lambda_{k,m}$ is that $k$ and $m$ can be separated in the series convergent argument.  However, for the half-wave propagator, this feature breaks down for the square root of $\lambda_{k,m}$. Therefore, we cannot directly construct the wave propagator by following the argument of \cite{FZZ22,WZZ}. \vspace{0.1cm}

\item Due to the uniform magnetic field caused a trapped well, the spectral measure will involve a factor $\sin(tB_0)$ (which is a short-time decay but not long-time). This lead to the failure of the spectral measure argument in \cite{GYZZ22}. \vspace{0.1cm}

\item To go around constructing the spectral measure, we turn to prove the Bernstein inequality to deal with the low frequency.
For the high frequency, we use the classical subordination formula
\begin{equation*}
e^{-y\sqrt{H_{\alpha,B_0}}}=\frac{y}{2\sqrt{\pi}}\int_0^\infty e^{-sH_{\alpha,B_0}}e^{-\frac{y^2}{4s}}s^{-\frac{3}{2}}ds,\quad y>0,
\end{equation*}
which provides  a connecting bridge between the Schr\"odinger propagator and the half-wave propagator. This idea is originated from \cite{MS} and \cite{DPR}. The dispersive estimates proved in \cite{WZZ} are then used to address the high frequency of the waves. \vspace{0.1cm}

\item The Littlewood-Paley theory (including Bernstein inequality and the square function inequality) associated with the Schr\"odinger operator $H_{\alpha,B_0}$ are proved by establishing
the Gaussian upper bounds for the heat kernel. \vspace{0.1cm}

\item The heat kernel estimates for magnetic Schr\"odinger operators have its own interest, we provide two methods to study the heat kernel. Unfortunately, due to the fact $A(x)=A_{\mathrm{hmf}}(x)+A_B(x)\notin L^2_{\mathrm{loc}}(\R^2)$, Simon's diamagnetic pointwise inequality (see e.g.  \cite[Theorem B.13.2]{Simon}, \cite{AHS1})  cannot be directly used. Even though we cannot recover all the magnetic effects to prove the optimal heat kernel estimates, but we can prove
\begin{equation*}
\big|e^{-tH_{\alpha,B_0}}(x,y)\big|\lesssim  \frac{1}{t}e^{-\frac{|x-y|^2}{Ct}},
\end{equation*}
which is enough for proving the Bernstein inequality and the square function inequality.
We first prove the on-diagonal estimates and then extend to the off-diagonal estimates by establishing the Davies-Gaffney inequality.
The key points are the argument of \cite{CS08} and \cite{Grig} applying to the magnetic operator $H_{\alpha,B_0}$.
To recover more magnetic effects, we use the Schulman-Sunada formula from \cite{stov,stov1} to construct the heat kernel and prove
\begin{equation*}
\Big| e^{-t H_{\alpha, B_0}}(x,y)\Big|\leq C\frac{B_0 e^{-\alpha tB_0}}{4\pi \sinh (tB_0)} e^{-\frac{B_0|x-y|^2}{4\tanh (tB_0)}},
\end{equation*}
which is better than the previous one.
For more discussion on the heat kernel estimates, we refer to the remarks in Section \ref{sec:heat}.

\end{itemize}

 \vspace{0.2cm}

The paper is organized as follows. In Section \ref{sec:pre}, as a preliminary step, we briefly recall the self-adjoint extension and the spectrum of the operator $H_{\alpha,B_0}$, and prove  the equivalence between Sobolev norm and a special Besov  norm. In Section \ref{sec:heat}, we construct the heat kernel and prove the Gaussian upper bounds. In Section \ref{sec:BS}, we prove the Bernstein inequalities and the square function inequality
by using the heat kernel estimates.  Finally, in Section \ref{sec:decay} and Section \ref{sec:str}, we prove the dispersive estimate \eqref{dis-w} and the Strichartz estimate \eqref{stri:w} in Theorem \ref{thm:dispersive} respectively.
\vspace{0.2cm}

{\bf Acknowledgments:}\quad  The authors thank L. Fanelli, P. \v{S}t'ov\'{\i}\v{c}ek and P. D'Ancona for helpful discussions. This work is supported by National Natural Science Foundation of China (12171031, 11901041, 11831004).
\vspace{0.2cm}

\section{preliminaries} \label{sec:pre}

In this section, we first repeat the preliminary section of \cite{WZZ} to recall two known results about the Friedrichs self-adjoint extension of the operator $H_{\alpha,B_0}$ and
the spectrum of  $H_{\alpha,B_0}$. Next, we use the spectral argument to prove the equivalence between the Sobolev norm and a special Besov  norm.

\subsection{Quadratic form and self-adjoint extension}

Define the space $\mathcal{H}_{\alpha,B_0}^1(\R^2)$ as the completion of $\mathcal{C}_c^\infty(\R^2\setminus\{0\};\C)$
with respect to the norm
\begin{equation*}
\|f\|_{\mathcal{H}_{\alpha,B_0}^1(\R^2)}=\Big(\int_{\R^2}|\nabla_{\alpha,B_0} f(x)|^2 dx\Big)^{\frac12}
\end{equation*}
where
\begin{equation*}
\nabla_{\alpha,B_0} f(x)=\nabla f+i(A_B+A_{\mathrm{hmf}})f.
\end{equation*}
The quadratic form $Q_{\alpha,B_0}$ associated with $H_{\alpha,B_0}$ is defined by
\begin{equation*}
\begin{split}
Q_{\alpha,B_0}: & \quad \quad \mathcal{H}_{\alpha,B_0}^1\to \R\\
Q_{\alpha,B_0}(f)&=\int_{\R^2}|\nabla_{\alpha,B_0} f(x)|^2dx.
\end{split}
\end{equation*}
Then the quadratic form $Q_{\alpha,B_0}$ is positive definite, which implies that the operator
$H_{\alpha,B_0}$ is symmetric semi bounded from below and thus admits a self-adjoint extension (Friedrichs extension)
$H^{F}_{\alpha,B_0}$
with the natural form domain
\begin{equation*}
\mathcal{D}=\Big\{f\in \mathcal{H}_{\alpha,B_0}^1(\R^2):  H^{F}_{\alpha,B_0}f\in L^{2}(\R^2)\Big\}
\end{equation*}
Even though the operator $H_{\alpha,B_0}$ has many other self-adjoint extensions (see \cite{ESV}) by the von Neumann extension theory,  in this whole paper, we use the simplest Friedrichs extension and briefly write $H_{\alpha,B_0}$ as its Friedrichs extension $H^{F}_{\alpha, B_0}$.

\subsection{The spectrum of the operator $H_{\alpha,B_0}$}

In this subsection, we exhibit the eigenvalues and eigenfunctions of the Schr\"odinger operator
\begin{equation*}
H_{\alpha,B_0}=-(\nabla+i(A_B(x)+A_{\mathrm{hmf}}(x)))^2,
\end{equation*}
where the magnetic vector potentials are in \eqref{AB-potential} and
\eqref{A-hmf}.
\begin{proposition}[The spectrum for $H_{\alpha,B_0}$]\label{prop:spect}
Let  $H_{\alpha,B_0}$ be the self-adjoint Schr\"odinger operator in \eqref{H-A}.
Then the eigenvalues of $H_{\alpha,B_0}$ are given by
\begin{equation}\label{eigen-v}
\lambda_{k,m}=(2m+1+|k+\alpha|)B_0+(k+\alpha)B_0,\quad m,\,k\in\mathbb{Z},\, m\geq0,
\end{equation}
with (finite) multiplicity
\begin{equation*}
\#\Bigg\{j\in\mathbb{Z}:\frac{\lambda_{k,m}-(j+\alpha)B_0}{2B_0}-\frac{|j+\alpha|+1}{2}\in\mathbb{N}\Bigg\}.
\end{equation*}
Furthermore, let $\theta=\frac{x}{|x|}$, the corresponding eigenfunction is given by
\begin{equation}\label{eigen-f}
V_{k,m}(x)=|x|^{|k+\alpha|}e^{-\frac{B_0 |x|^2}{4}}\, P_{k,m}\Bigg(\frac{B_0|x|^2}{2}\Bigg)e^{ik\theta}
\end{equation}
where $P_{k,m}$ is the polynomial of degree $m$ given by
\begin{equation*}
P_{k,m}(r)=\sum_{n=0}^m\frac{(-m)_n}{(1+|k+\alpha|)_n}\frac{r^n}{n!}.
\end{equation*}
with $(a)_n$ ($a\in\R$) the Pochhammer's symbol
\begin{align*}
(a)_n=
\begin{cases}
1,&n=0;\\
a(a+1)\cdots(a+n-1),&n=1,2,\cdots
\end{cases}
\end{align*}\end{proposition}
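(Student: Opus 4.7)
The plan is to diagonalize $H_{\alpha,B_0}$ by separation of variables, exploiting the rotational symmetry of the two potentials. Since both $A_B$ and $A_{\mathrm{hmf}}$ are divergence-free (away from the origin) and purely tangential with $A_\theta = \alpha/r + B_0 r/2$, the operator $H_{\alpha,B_0}$ commutes with the angular momentum $-i\partial_\theta$. Thus $L^2(\R^2)$ decomposes orthogonally as $\bigoplus_{k\in\Z}\HH_k$ with $\HH_k=\{g(r)e^{ik\theta}:g\in L^2(\R_+,r\,dr)\}$, and a routine computation in polar coordinates using $A\cdot\nabla f = (A_\theta/r)\partial_\theta f$ shows that on $\HH_k$ the operator reduces to the radial operator
\[
L_k = -\partial_r^2 - \tfrac{1}{r}\partial_r + \tfrac{(k+\alpha)^2}{r^2} + \tfrac{B_0^2 r^2}{4} + (k+\alpha)B_0.
\]

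Next I would solve the radial eigenvalue problem $L_k g = \lambda g$ using the ansatz $g(r) = r^{|k+\alpha|} e^{-B_0 r^2/4} h(\rho)$ with $\rho = B_0 r^2/2$. A direct calculation transforms the ODE into Laguerre's equation
\[
\rho\, h''(\rho) + \bigl(1+|k+\alpha| - \rho\bigr) h'(\rho) + \mu\, h(\rho) = 0,
\]
where $\mu = (\lambda - (k+\alpha)B_0)/(2B_0) - (|k+\alpha|+1)/2$. The requirement that $g$ lie in the Friedrichs form domain $\HH^1_{\alpha,B_0}$ rules out the second indicial branch at $r=0$ and forces the Kummer solution ${}_1F_1(-\mu;\,1+|k+\alpha|;\,\rho)$ to terminate, i.e.\ $\mu = m \in \{0,1,2,\ldots\}$. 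This yields precisely $\lambda_{k,m} = (2m+1+|k+\alpha|)B_0 + (k+\alpha)B_0$ and identifies the truncated series with the polynomial $P_{k,m}$ stated in the proposition.

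The multiplicity formula is then bookkeeping: $V_{j,m'}$ contributes to the $\lambda_{k,m}$-eigenspace exactly when $(\lambda_{k,m}-(j+\alpha)B_0)/(2B_0) - (|j+\alpha|+1)/2$ is a nonnegative integer, which recovers the displayed indexing set. Completeness of $\{V_{k,m}\}_{k\in\Z,\,m\geq 0}$ in $L^2(\R^2)$ follows by combining the Fourier decomposition in $\theta$ with the completeness of generalized Laguerre polynomials in the weighted space $L^2(\R_+, e^{-\rho}\rho^{|k+\alpha|}\,d\rho)$ on each angular sector.

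The main subtlety to confront is the selection of the regular indicial branch $r^{+|k+\alpha|}$ at the origin: when $|k+\alpha|<1$ both indicial solutions are locally in $L^2(r\,dr)$, so integrability alone does not single out the eigenfunction. Here the Friedrichs extension is decisive, since its form domain requires the full covariant gradient to belong to $L^2$, and only the regular branch satisfies this when $\alpha\notin\Z$. Once this boundary condition is justified, the remainder is a classical Laguerre diagonalization of a two-dimensional isotropic oscillator with a (shifted, possibly non-integer) centrifugal barrier.
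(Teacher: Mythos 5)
The paper itself does not prove Proposition~\ref{prop:spect}; the remark immediately following the three remarks says ``We refer to \cite{WZZ} for the proof,'' so there is no in-text argument to compare against. That said, your proposal is the standard (and almost certainly the intended) route, and it is correct: the rotational symmetry gives the orthogonal decomposition $L^2(\R^2)=\bigoplus_{k}\HH_k$, the reduction on $\HH_k$ to $L_k = -\partial_r^2 - r^{-1}\partial_r + (k+\alpha)^2 r^{-2} + \tfrac{B_0^2 r^2}{4} + (k+\alpha)B_0$ is exactly right, and the Laguerre/Kummer substitution $g = r^{|k+\alpha|}e^{-B_0 r^2/4}h(B_0 r^2/2)$ together with the terminating confluent hypergeometric series gives \eqref{eigen-v} and \eqref{eigen-f}. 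You also correctly identify the real subtlety, namely that for $|k+\alpha|<1$ both indicial branches $r^{\pm|k+\alpha|}$ are locally $L^2(r\,dr)$, and that it is the finiteness of the form energy (in particular of $\int (k+\alpha)^2 r^{-2}|g|^2\,r\,dr$) in the Friedrichs domain that excludes $r^{-|k+\alpha|}$. One small imprecision worth separating: membership in the form domain handles the boundary condition at $r=0$, while it is $L^2$-integrability at $r=\infty$ (the subexponential growth $e^{\rho}$ of a nonterminating ${}_1F_1$ overwhelming $e^{-\rho/2}$) that forces $\mu$ to be a nonnegative integer; you fold both into one sentence, but both conditions are indeed consequences of $g\in\HH^1_{\alpha,B_0}\subset L^2$. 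The multiplicity count and the completeness via Fourier decomposition plus Laguerre completeness on each angular sector are also correct.
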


\begin{remark} One can verify that the orthogonality holds
$$\int_{\R^2}V_{k_1,m_1}(x) V_{k_2,m_2}(x) \,dx=0,\quad \text{if}\quad (k_1, m_1)\neq (k_2, m_2).$$
\end{remark}

\begin{remark} Let $L^\alpha_m(t)$ be the generalized Laguerre polynomials
\begin{equation*}
L^\alpha_m(t)=\sum_{n=0}^m (-1)^n \Bigg(
  \begin{array}{c}
    m+\alpha \\
    m-n \\
  \end{array}
\Bigg)\frac{t^n}{n!},
\end{equation*}
then one has the well known orthogonality relation
\begin{equation*}
\int_0^\infty x^{\alpha} e^{-x}L^\alpha_m(x) L^\alpha_n (x)\, dx=\frac{\Gamma(n+\alpha+1)}{n!} \delta_{n,m},\end{equation*}
where $\delta_{n,m}$ is the Kronecker delta. Let $\tilde{r}=\frac{B_0|x|^2}{2}$ and $\alpha_k=|k+\alpha|$, then
\begin{equation}\label{P-L}
P_{k,m}(\tilde{r})=\sum_{n=0}^m\frac{(-1)^n m(m-1)\cdots(m-(n-1))}{(\alpha_k +1)(\alpha_k+2)\cdots(\alpha_k+n)}\frac{\tilde{r}^n}{n!}=\Bigg(
  \begin{array}{c}
    m+\alpha_k \\
    m \\
  \end{array}
\Bigg)^{-1}L^{\alpha_k}_m(\tilde{r}).
\end{equation}
Therefore,
\begin{equation}\label{V-km-2}
\|V_{k,m}(x)\|^2_{L^2(\R^2)} =\pi \Big(\frac{2}{B_0}\Big)^{\alpha_k+1}\Gamma(1+\alpha_k) \Bigg(
  \begin{array}{c}
    m+\alpha_k \\
    m \\
  \end{array}
\Bigg)^{-1}. \end{equation}
\end{remark}

\begin{remark}
Recall the Poisson kernel formula for Laguerre polynomials \cite[(6.2.25)]{AAR01}: for $a, b, c, \alpha>0$
\begin{equation*}
\begin{split}
&\sum_{m=0}^\infty e^{-cm}\frac{m !}{\Gamma(m+\alpha+1)} L_m^{\alpha} (a) L_m^{\alpha} (b)\\
&=\frac{e^{\frac{\alpha c}2}}{(ab)^{\frac{\alpha}2}(1-e^{-c})} \exp\left(-\frac{(a+b)e^{-c}}{1-e^{-c}}\right) I_{\alpha}\left(\frac{2\sqrt{ab}e^{-\frac{c}2}}{1-e^{-c}}\right)
\end{split}
\end{equation*}
then this together with \eqref{P-L} gives
\begin{equation}\label{Po-L}
\begin{split}
&\sum_{m=0}^\infty e^{-cm}\frac{m !}{\Gamma(m+\alpha_k+1)}
\Bigg(\begin{array}{c}
    m+\alpha_k \\
    m \\
  \end{array}
\Bigg)^2 P_{k,m} (a) P_{k,m} (b)\\
&=\frac{e^{\frac{\alpha_k c}2}}{(ab)^{\frac{\alpha_k}2}(1-e^{-c})} \exp\left(-\frac{(a+b)e^{-c}}{1-e^{-c}}\right) I_{\alpha_k}\left(\frac{2\sqrt{ab}e^{-\frac{c}2}}{1-e^{-c}}\right).
\end{split}
\end{equation}
\end{remark}
We refer to \cite{WZZ} for the proof.

\subsection{The Sobolev spaces}
In this subsection, we will prove the equivalence of two norms.

\begin{proposition}[Equivalent norms]\label{prop:E-S-B}
Let the Sobolev norm and Besov norm be defined in \eqref{Sobolev2} and \eqref{Besov} respectively. For $s\in\R$, then
there exist positive constants $c, C$ such that
\begin{equation}\label{S-B-H}
c\|f\|_{\dot \HH^s_{\alpha,B_0}(\R^2)}\leq \|f\|_{\dot{\mathcal{B}}^s_{2,2}(\R^2)}\leq C \|f\|_{\dot \HH^s_{\alpha,B_0}(\R^2)},
\end{equation}
and
\begin{equation}\label{S-B-H'}
c\|f\|_{ \HH^s_{\alpha,B_0}(\R^2)}\leq \|f\|_{{\mathcal{B}}^s_{2,2}(\R^2)}\leq C \|f\|_{ \HH^s_{\alpha,B_0}(\R^2)}.
\end{equation}
\end{proposition}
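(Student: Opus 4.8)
The plan is to rewrite both norms in \eqref{S-B-H} as integrals against the spectral measure of $H_{\alpha,B_0}$ and then reduce the claim to a pointwise comparison of two spectral multipliers. Since $H_{\alpha,B_0}$ is self-adjoint (its Friedrichs extension), the spectral theorem supplies a projection-valued measure $dE_\lambda$, and by Proposition~\ref{prop:spect} every eigenvalue satisfies $\lambda_{k,m}=(2m+1)B_0+\big(|k+\alpha|+(k+\alpha)\big)B_0\geq B_0>0$, so $\sigma(H_{\alpha,B_0})\subset[B_0,\infty)$. For a real Borel function $g$ and $f\in L^2(\R^2)$ one has $\|g(\sqrt{H_{\alpha,B_0}})f\|_{L^2}^2=\int_0^\infty g(\sqrt\lambda)^2\,d\mu_f(\lambda)$, where $\mu_f:=\langle E_{(\cdot)}f,f\rangle$ is a finite nonnegative measure supported in $[B_0,\infty)$. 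Applying this with $g(\mu)=\mu^s$ rewrites \eqref{Sobolev2} as $\|f\|_{\dot\HH^s_{\alpha,B_0}}^2=\int_0^\infty\lambda^s\,d\mu_f(\lambda)$, and with $g=\varphi_j$ rewrites \eqref{Besov} (for $p=r=2$) as $\|f\|_{\dot{\mathcal B}^s_{2,2}}^2=\sum_{j\in\Z}2^{2js}\int_0^\infty\varphi(2^{-j}\sqrt\lambda)^2\,d\mu_f(\lambda)$.

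Next I would interchange the sum and the integral — legitimate by Tonelli's theorem, as all terms are nonnegative — to get $\|f\|_{\dot{\mathcal B}^s_{2,2}}^2=\int_0^\infty m_s(\lambda)\,d\mu_f(\lambda)$ with $m_s(\lambda):=\sum_{j\in\Z}2^{2js}\,\varphi(2^{-j}\sqrt\lambda)^2$. Then \eqref{S-B-H} is an immediate consequence of the pointwise two-sided bound $m_s(\lambda)\simeq\lambda^s$ on $(0,\infty)$, with constants depending only on $s$ and $\varphi$. To establish this bound, I would write $\mu=\sqrt\lambda$, $g_s(\mu):=\sum_j2^{2js}\varphi(2^{-j}\mu)^2$, and note the exact scaling identity $g_s(2\mu)=2^{2s}g_s(\mu)$ obtained by reindexing $j\mapsto j-1$; hence $\mu\mapsto\mu^{-2s}g_s(\mu)$ is invariant under $\mu\mapsto2\mu$. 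This function is continuous, and by the properties in \eqref{LP-dp} (for each $\mu$ at most two consecutive $\varphi(2^{-j}\mu)$ are nonzero and they sum to $1$, whence $\sum_j\varphi(2^{-j}\mu)^2\geq\tfrac14$) it is bounded above and below by positive constants on $[1,2]$, hence — by the scaling invariance — on all of $(0,\infty)$. This gives $g_s(\mu)\simeq\mu^{2s}$, i.e. $m_s(\lambda)\simeq\lambda^s$, completing \eqref{S-B-H}.

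For the inhomogeneous estimate \eqref{S-B-H'} I would run exactly the same argument with $\lambda^s$ replaced by $(1+\lambda)^s$ and the low-frequency piece $\sum_{j\leq0}\varphi_j$ replaced by the single bump $\phi_0$ of \eqref{LP-dp}; the corresponding multiplier $\phi_0(\sqrt\lambda)^2+\sum_{j\geq1}2^{2js}\varphi(2^{-j}\sqrt\lambda)^2$ is $\simeq1\simeq(1+\lambda)^s$ for $\lambda\lesssim1$ and differs from $m_s(\lambda)$ by only finitely many harmless low-$j$ terms for $\lambda\gtrsim1$, where it is $\simeq\lambda^s\simeq(1+\lambda)^s$. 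Alternatively, since $\sigma(H_{\alpha,B_0})$ is bounded away from $0$, the homogeneous and inhomogeneous norms are themselves globally equivalent, so \eqref{S-B-H'} follows directly from \eqref{S-B-H}. I expect the only step carrying any real content to be the elementary multiplier estimate $m_s\simeq\lambda^s$ together with justifying the sum--integral interchange; everything else is routine bookkeeping with the spectral theorem, and there is no genuine obstacle here — the statement is essentially the standard fact that a Littlewood--Paley square function reproduces the Sobolev norm, transported through the functional calculus of $H_{\alpha,B_0}$.
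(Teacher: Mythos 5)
Your proof is correct, and it reaches the same underlying comparison as the paper but packages it differently. The paper expands $f$ in the explicit $L^2$-orthonormal eigenbasis $\{\tilde V_{k,m}\}$ from Proposition~\ref{prop:spect} and compares the two norms coefficient by coefficient: on the support of $\varphi(\sqrt{\lambda_{k,m}}/2^j)$ one has $2^{2js}\simeq\lambda_{k,m}^{s}$, and the finite overlap of the dyadic cutoffs (finitely many nonzero terms in $\sum_j\varphi(\lambda/2^j)=1$) gives both inequalities in \eqref{S-B-H}. You instead invoke the abstract projection-valued measure, exchange sum and integral by Tonelli, and isolate the scalar multiplier bound $m_s(\lambda)=\sum_j 2^{2js}\varphi(2^{-j}\sqrt\lambda)^2\simeq\lambda^s$, proved cleanly via the dyadic scaling identity and the lower bound $\sum_j\varphi(2^{-j}\mu)^2\geq 1/4$ coming from \eqref{LP-dp}; since the spectrum here is pure point, your spectral measure is exactly the paper's eigenfunction expansion, so the content is identical, but your formulation is operator-independent and would apply verbatim to any nonnegative self-adjoint operator. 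Two small advantages of your route: the two-sided multiplier estimate replaces the paper's separate upper/lower arguments (the lower one implicitly using Cauchy--Schwarz over the finitely many contributing $j$), and for \eqref{S-B-H'} you supply an actual argument --- either the modified multiplier with $\phi_0$, or, more simply, the observation that $\sigma(H_{\alpha,B_0})\subset[B_0,\infty)$ (since $\lambda_{k,m}\geq B_0$) makes the homogeneous and inhomogeneous norms globally equivalent --- where the paper only says the proof is similar. One minor caveat: in the inhomogeneous case the constants then depend on $B_0$ (and $s$), which is harmless but worth stating.
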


\begin{proof} Let $\tilde{V}_{k,m}$ be the $L^2$-normalization of $V_{k,m}$ in \eqref{eigen-f}, then the eigenfunctions
$\Big\{\tilde{V}_{k,m}\Big\}_{k\in\Z, m\in\mathbb{N}} $
form an orthonormal basis of $L^2(\mathbb{R}^2)$ corresponding to the eigenfunctions of $H_{\alpha,B_0}$. \vspace{0.2cm}

 By the functional calculus, for any well-behaved functions $F$ (e.g. bounded Borel measurable function) and $f\in L^2$,
we can write
\begin{equation*}
F(H_{\alpha,B_0}) f=\sum_{k\in\Z, \atop m\in\mathbb{N}} F(\lambda_{k,m}) c_{k,m}\tilde{V}_{k,m}(x).
\end{equation*}
where
\begin{equation*}
c_{k,m}=\int_{\mathbb{R}^2}f(y)\overline{\tilde{V}_{k,m}(y)} dy.
\end{equation*}
Then
\begin{equation}\label{norm-f}
\|F(H_{\alpha,B_0}) f\|_{L^2(\R^2)}=\Big(\sum_{k\in\Z, \atop m\in\mathbb{N}} \big|F(\lambda_{k,m}) c_{k,m}\big|^2\Big)^{1/2}.
\end{equation}
In particular, we have
\begin{equation*}
\|f\|_{\dot \HH^s_{\alpha,B_0}(\R^2)}=\|H^{\frac s2}_{\alpha,B_0} f\|_{L^2(\R^2)}=\Big(\sum_{k\in\Z, \atop m\in\mathbb{N}} \big|\lambda^\frac{s}2_{k,m} c_{k,m}\big|^2\Big)^{1/2}.
\end{equation*}
 Let $\varphi\in C_c^\infty(\mathbb{R}\setminus\{0\})$ in \eqref{LP-dp}. On the one hand, by the definition and \eqref{norm-f}, we have
\begin{equation*}
\begin{split}
\|f\|_{\dot{\mathcal{B}}^s_{2,2}(\R^2)}&=\Big(\sum_{j\in\Z}2^{2js}\|\varphi_j(\sqrt{H_{\alpha,B_0}})f\|_{L^2(\R^2)}^2\Big)^{1/2}\\
&=\Big(\sum_{j\in\Z}\sum_{k\in\Z, \atop m\in\mathbb{N}}  2^{2js}\big|
\varphi\Big(\frac{\sqrt{\lambda_{k,m}}}{2^j}\Big) c_{k,m}\big|^2\Big)^{1/2}\\
&\cong \Big(\sum_{j\in\Z}\sum_{k\in\Z, \atop m\in\mathbb{N}}  \lambda_{k,m}^{s}\big|
\varphi\Big(\frac{\sqrt{\lambda_{k,m}}}{2^j}\Big) c_{k,m}\big|^2\Big)^{1/2}\\
&\lesssim \Big(\sum_{k\in\Z, \atop m\in\mathbb{N}}  |\lambda_{k,m}^{\frac s2}c_{k,m}|^2\sum_{j\in\Z}\big|
\varphi\Big(\frac{\sqrt{\lambda_{k,m}}}{2^j}\Big) \big|^2\Big)^{1/2}\\
&\lesssim \Big(\sum_{k\in\Z, \atop m\in\mathbb{N}}  |\lambda_{k,m}^{\frac s2}c_{k,m}|^2\Big)^{1/2}=\|f\|_{\dot \HH^s_{\alpha,B_0}(\R^2)}.
\end{split}
\end{equation*}
On the other hand,  we have
\begin{equation*}
\begin{split}
\|f\|_{\dot \HH^s_{\alpha,B_0}(\R^2)}&=\Big(\sum_{k\in\Z, \atop m\in\mathbb{N}}  |\lambda_{k,m}^{\frac s2}c_{k,m}|^2\Big)^{1/2}\\
&=\Big(\sum_{k\in\Z, \atop m\in\mathbb{N}}  \Big|\sum_{j\in\Z} \varphi\Big(\frac{\sqrt{\lambda_{k,m}}}{2^j}\Big)\lambda_{k,m}^{\frac s2}c_{k,m}\Big|^2\Big)^{1/2}\\
&\leq C\Big(\sum_{j\in\Z}\sum_{k\in\Z, \atop m\in\mathbb{N}}  2^{2js}\big|
\varphi\Big(\frac{\sqrt{\lambda_{k,m}}}{2^j}\Big) c_{k,m}\big|^2\Big)^{1/2}\\
&\lesssim\Big(\sum_{j\in\Z}2^{2js}\|\varphi_j(\sqrt{H_{\alpha,B_0}})f\|_{L^2(\R^2)}^2\Big)^{1/2}=\|f\|_{\dot{\mathcal{B}}^s_{2,2}(\R^2)}.
\end{split}
\end{equation*}
In the above inequality, we have used the fact that, for a fixed $\lambda$, there are only finite terms in the summation
$$1=\sum_{j\in\Z}\varphi\big(\frac{\lambda}{2^j}\big).$$
Above all, we have proved \eqref{S-B-H}. One can prove \eqref{S-B-H'} similarly.

\end{proof}

\section{Heat kernel estimates}\label{sec:heat}

In this section, for our purpose of the Littlewood-Paley theory associated with $H_{\alpha,B_0}$, we study the heat kernel estimates associated with the magnetic Schr\"odinger operator $H_{\alpha,B_0}$.
We provide two methods to study the heat kernel. In the first method,
we first combine the strategies of \cite{FFFP1,GYZZ22,FZZ22} to construct the heat kernel by using the spectrum property in Proposition \ref{prop:spect}.
And then we use the representation of the heat kernel to obtain the on-diagonal estimates. Finally we extend the on-diagonal bounds by
adding the Gaussian factor $\exp(-d^2(x,y)/Ct)$ to obtain the off-diagonal Gaussian bounds. In the second one, we directly construct the heat kernel by using the Schulman-Sunada formula in \cite{stov,stov1} and then optimal the established bounds.
 \vspace{0.2cm}

\subsection{Method I: }More precisely, we will first prove the following result.

\begin{proposition}\label{prop:H}
Let $H_{\alpha,B_0}$ be the operator in \eqref{H-A} and suppose $x=r_1(\cos\theta_1,\sin\theta_1)$
and $y=r_2(\cos\theta_2,\sin\theta_2)$. Let $u(t,x)$ be the solution of the heat equation
\begin{equation}\label{equ:H}
\begin{cases}
\big(\partial_t+H_{\alpha,B_0}\big)u(t,x)=0,\\
u(0,x)=f(x).
\end{cases}
\end{equation}
Then
\begin{equation*}
u(t,x)=e^{-tH_{\alpha,B_0}}f=\int_{\R^2} K_H(t; x,y) f(y)\, dy,\quad t>0,
\end{equation*}
where the kernel of the heat propagator $e^{-tH_{\alpha,B_0}}$ is given by
\begin{equation}\label{rep:h}
K_H(t; x,y)=\Big(\frac{B_0e^{-\alpha tB_0}}{4\pi^2\sinh tB_0}\Big) e^{-\frac{B_0(r_1^2+r_2^2)\cosh tB_0}{4\sinh tB_0}}\sum_{k\in\Z}e^{ik(\theta_1-\theta_2+itB_0)}I_{\alpha_k}\left(\frac{B_0r_1r_2}{2\sinh tB_0}\right).
\end{equation}
Furthermore, there exists a constant $C$ such that
\begin{equation}\label{est:H-radial}
|K_H(t; x,y)|\leq C\frac{B_0e^{(1-\alpha)B_0 t}}{\sinh tB_0}e^{-\frac{B_0(r_1-r_2)^2}{4\tanh tB_0}}.
\end{equation}
\end{proposition}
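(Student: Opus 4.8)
The plan is to prove Proposition~\ref{prop:H} in two stages: first derive the closed series \eqref{rep:h} for the heat kernel from the spectral decomposition of $H_{\alpha,B_0}$ together with the Laguerre Poisson formula \eqref{Po-L}, and then read off the Gaussian bound \eqref{est:H-radial} from \eqref{rep:h} via an elementary inequality for a bilateral sum of modified Bessel functions. Throughout we take $t>0$ (so $\sinh(tB_0)>0$) and abbreviate $\alpha_k=|k+\alpha|$, $a=\tfrac{B_0r_1^2}{2}$, $b=\tfrac{B_0r_2^2}{2}$.

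\emph{Step 1: the representation.} Let $\tilde V_{k,m}$ be the $L^2$-normalisation of the eigenfunctions \eqref{eigen-f}, which by Proposition~\ref{prop:spect} form an orthonormal basis of $L^2(\R^2)$. By the functional calculus $e^{-tH_{\alpha,B_0}}f=\sum_{k,m}e^{-t\lambda_{k,m}}\langle f,\tilde V_{k,m}\rangle\tilde V_{k,m}$ solves \eqref{equ:H}, with integral kernel
$$K_H(t;x,y)=\sum_{k\in\Z}\sum_{m\ge 0}e^{-t\lambda_{k,m}}\,\frac{V_{k,m}(x)\,\overline{V_{k,m}(y)}}{\|V_{k,m}\|_{L^2(\R^2)}^2}.$$
I would insert \eqref{eigen-v}, \eqref{eigen-f} and the norm \eqref{V-km-2}, peel off the factor $e^{-2mtB_0}$ from $e^{-t\lambda_{k,m}}$, and use the algebraic identity $\Gamma(\alpha_k+1)^{-1}\binom{m+\alpha_k}{m}=\frac{m!}{\Gamma(m+\alpha_k+1)}\binom{m+\alpha_k}{m}^2$ to recognize the inner sum over $m$ as exactly the left-hand side of \eqref{Po-L} with $c=2tB_0$. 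Performing that summation and simplifying with $1-e^{-2tB_0}=2e^{-tB_0}\sinh(tB_0)$, $\frac{e^{-2tB_0}}{1-e^{-2tB_0}}=\frac{e^{-tB_0}}{2\sinh(tB_0)}$ and $1+\frac{e^{-tB_0}}{\sinh(tB_0)}=\frac{\cosh(tB_0)}{\sinh(tB_0)}$, all powers of $r_1,r_2$ and of $B_0$ collapse, the phase recombines as $e^{ik(\theta_1-\theta_2)}e^{-ktB_0}=e^{ik(\theta_1-\theta_2+itB_0)}$, and \eqref{rep:h} drops out after collecting the constants. Absolute convergence of the double series for $t>0$ — which also legitimises interchanging the two summations — follows from the geometric factor $e^{-2mtB_0}$ in $m$ together with the super-exponential decay of $I_{\alpha_k}\!\big(\tfrac{B_0r_1r_2}{2\sinh tB_0}\big)$ in $|k|$, which outpaces the growth $e^{|k|tB_0}$ of $e^{-ktB_0}$ as $k\to-\infty$.

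\emph{Step 2: the Gaussian bound.} Set $z=\tfrac{B_0r_1r_2}{2\sinh(tB_0)}>0$. Since $I_\nu(z)>0$ and $|e^{ik(\theta_1-\theta_2+itB_0)}|=e^{-ktB_0}$, \eqref{rep:h} gives
$$|K_H(t;x,y)|\le\frac{B_0e^{-\alpha tB_0}}{4\pi^2\sinh(tB_0)}\,e^{-\frac{B_0(r_1^2+r_2^2)\cosh(tB_0)}{4\sinh(tB_0)}}\sum_{k\in\Z}e^{-ktB_0}I_{\alpha_k}(z),$$
and the crux is the elementary inequality $\sum_{k\in\Z}e^{-ktB_0}I_{\alpha_k}(z)\lesssim e^{tB_0}e^{z\cosh(tB_0)}$ for $t>0$. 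One reduces to $0\le\alpha<1$ (the general case only affects the fractional part of $\alpha$). Then, by the classical monotonicity of $\nu\mapsto I_\nu(z)$ on $[0,\infty)$ for fixed $z>0$, one has $I_{\alpha_k}(z)=I_{k+\alpha}(z)\le I_k(z)$ for $k\ge0$ and $I_{\alpha_k}(z)=I_{|k|-\alpha}(z)\le I_{|k|-1}(z)$ for $k\le-1$, so after re-indexing the second tail by $j=|k|-1$,
$$\sum_{k\in\Z}e^{-ktB_0}I_{\alpha_k}(z)\le\sum_{k\ge0}e^{-ktB_0}I_k(z)+e^{tB_0}\sum_{j\ge0}e^{jtB_0}I_j(z)\le\big(1+e^{tB_0}\big)e^{z\cosh(tB_0)},$$
the last step because each half-sum of positive terms is bounded by the full bilateral sum $\sum_{k\in\Z}I_{|k|}(z)e^{\pm ktB_0}=e^{\frac z2(e^{tB_0}+e^{-tB_0})}=e^{z\cosh(tB_0)}$ (generating function of the $I_k$, using $I_{-k}=I_k$). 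Feeding this back in and using
$$-\frac{B_0(r_1^2+r_2^2)\cosh(tB_0)}{4\sinh(tB_0)}+z\cosh(tB_0)=-\frac{B_0(r_1-r_2)^2\cosh(tB_0)}{4\sinh(tB_0)}=-\frac{B_0(r_1-r_2)^2}{4\tanh(tB_0)},$$
one obtains $|K_H(t;x,y)|\le\frac{B_0e^{(1-\alpha)tB_0}}{2\pi^2\sinh(tB_0)}e^{-\frac{B_0(r_1-r_2)^2}{4\tanh(tB_0)}}$, which is \eqref{est:H-radial}.

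\emph{Where the difficulty lies.} The bulkiest part is the bookkeeping of Step~1 — matching the $m$-sum precisely to \eqref{Po-L} and pushing the hyperbolic-function algebra through without losing track of constants — but the genuinely delicate point is the Bessel sum of Step~2: one must control $\sum_k e^{-ktB_0}I_{|k+\alpha|}(z)$ by the closed form $e^{z\cosh tB_0}$ of its integer-order analogue, and it is precisely in absorbing the fractional shift $\alpha$ into the orders — via the monotonicity of $I_\nu$ in $\nu$ — that the harmless factor $e^{(1-\alpha)tB_0}$ in \eqref{est:H-radial} is produced.
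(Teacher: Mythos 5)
Your proposal is correct and follows essentially the same route as the paper: the spectral expansion in the eigenfunctions $\tilde V_{k,m}$ combined with the Laguerre Poisson kernel formula \eqref{Po-L} to produce \eqref{rep:h}, and then the bound on $\sum_{k\in\Z}e^{-ktB_0}I_{\alpha_k}(z)$ via monotonicity of $I_\nu$ in the order together with the generating identity $\sum_{k\in\Z}e^{kt}I_{|k|}(z)=e^{z\cosh t}$, yielding the factor $e^{(1-\alpha)tB_0}$ and the Gaussian term exactly as in \eqref{est:H-radial}. The only differences are cosmetic (you dominate each half-sum by the full bilateral sum directly, avoiding the paper's extra $I_0(z)\le Ce^z$ step), so nothing essential is changed.
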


\begin{remark} The argument is a bit different from the proof for the Schr\"odinger propagator. In particular, at first glance, the factor $e^{-tB_0k}$ is a trouble
in the summation of the formula \eqref{rep:h} when $k\in -\mathbb{N}$, but it converges due to
the factor $\sinh(tB_0)$ in the modified Bessel function.
\end{remark}

%

\begin{proof}
We construct the representation formula \eqref{rep:h} of the heat flow $e^{-tH_{\alpha,B_0}}$ by combining the argument of \cite{FFFP1} and \cite{FZZ22, GYZZ22}.
This is close to the construction of Schr\"odinger flow in our previous paper \cite{WZZ}, however, we provide the details again for self-contained.
\vspace{0.2cm}

Our starting point is the Proposition \ref{prop:spect}. Let $\tilde{V}_{k,m}$ be the $L^2$-normalization of $V_{k,m}$ in \eqref{eigen-f}, then the eigenfunctions
$\Big\{\tilde{V}_{k,m}\Big\}_{k\in\Z, m\in\mathbb{N}} $
form an orthonormal basis of $L^2(\mathbb{R}^2)$ corresponding to the eigenfunctions of $H_{\alpha,B_0}$. \vspace{0.2cm}

We expand the initial data $f(x)\in L^2$ as
\begin{equation*}
f(x)=\sum_{k\in\Z, \atop m\in\mathbb{N}} c_{k,m}\tilde{V}_{k,m}(x)
\end{equation*}
where
\begin{equation}\label{cmk1}
c_{k,m}=\int_{\mathbb{R}^2}f(x)\overline{\tilde{V}_{k,m}(x)}\, dx.
\end{equation}
The solution $u(t,x)$ of \eqref{equ:H} can be written as
\begin{equation}\label{sol:expan}
u(t,x)=\sum_{k\in\Z, \atop m\in\mathbb{N}} u_{k,m}(t)\tilde{V}_{k,m}(x),
\end{equation}
where $u_{k,m}(t)$ satisfies the ODE
\begin{equation*}
\begin{cases}
&u'_{k,m}(t)=-\lambda_{k,m}u_{k,m}(t),\\
& u_{k,m}(0)=c_{k,m},\quad k\in\Z, \, m\in\mathbb{N}.
\end{cases}
\end{equation*}
Thus we obtain $u_{k,m}(t)=c_{k,m}e^{-t\lambda_{k,m}}$.
Therefore the solution \eqref{sol:expan} becomes
\begin{equation*}
u(t,x)=\sum_{k\in\Z, \atop m\in\mathbb{N}} c_{k,m}e^{-t\lambda_{k,m}}\tilde{V}_{k,m}(x).
\end{equation*}
Plugging \eqref{cmk1} into the above expression yields
\begin{equation*}
u(t,x)=\sum_{k\in\Z, \atop m\in\mathbb{N}} e^{-t\lambda_{k,m}}\left(\int_{\mathbb{R}^2}f(y)\overline{\tilde{V}_{k,m}(y)} dy\right)\tilde{V}_{k,m}(x).
\end{equation*}
We write $f$ in a harmonic spherical expansion
\begin{equation*}
f(y)=\sum_{k\in\Z} f_k(r_2) e^{ik\theta_2},
\end{equation*}
where
\begin{equation}\label{f-k}
f_k(r_2)=\frac{1}{2\pi}\int_0^{2\pi} f(r_2, \theta_2) e^{-ik\theta_2}\, d\theta_2,\quad r_2=|y|,
\end{equation}
we thus have
\begin{align*}
u(t,x)&=\sum_{k\in\Z, \atop m\in\mathbb{N}} e^{-t\lambda_{k,m}}\frac{V_{k,m}(x)}{\|V_{k,m}\|^2_{L^2}}\Bigg(\int_0^\infty f_k(r_2)e^{-\frac{B_0r_2^2}{4}}\, P_{k,m}\Big(\frac{B_0r_2^2}{2}\Big) r_2^{1+\alpha_k}\mathrm{d}r_2\Bigg)\\
&=\Big(\frac{B_0}{2\pi}\Big)\sum_{k\in\Z}e^{ik\theta_1}\frac{B_0^{\alpha_k}e^{-t\beta_k}}{2^{\alpha_k}\Gamma(1+\alpha_k)}\Bigg[\sum_{m=0}^\infty
\Bigg(
  \begin{array}{c}
    m+\alpha_k \\
    m \\
  \end{array}
\Bigg)e^{-2tmB_0}\\
&\times\Bigg(\int_0^\infty f_k(r_2)(r_1r_2)^{\alpha_k}e^{-\frac{B_0(r_1^2+r_2^2)}{4}}P_{k,m}\left(\frac{B_0r_2^2}{2}\right)P_{k,m}\left(\frac{B_0r_1^2}{2}\right)r_2 \mathrm{d}r_2\Bigg)\Bigg],
\end{align*}
where $\alpha_k=|k+\alpha|$ and we use \eqref{eigen-v},\eqref{eigen-f},\eqref{V-km-2} and
\begin{equation*}
\begin{split}
\lambda_{k,m}&=(2m+1+|k+\alpha|)B_0+(k+\alpha)B_0\\
&:=2mB_0+\beta_k
\end{split}
\end{equation*}
with $\beta_k=(1+|k+\alpha|)B_0+(k+\alpha)B_0\geq B_0>0$.\vspace{0.2cm}

Using the formula \eqref{Po-L} and \eqref{f-k}, we obtain
\begin{align*}
u(t,x)&=\Big(\frac{B_0}{4\pi^2}\Big)\int_0^\infty\int_0^{2\pi}\sum_{k\in\Z}e^{ik(\theta_1-\theta_2)}\frac{B_0^{\alpha_k}e^{-t\beta_k}}{2^{\alpha_k}\Gamma(1+\alpha_k)}
 (r_1r_2)^{\alpha_k}e^{-\frac{B_0(r_1^2+r_2^2)}{4}}f(r_2,\theta_2)\\
&\times \frac{2^{\alpha_k}e^{t\alpha_k B_0}}{(B_0r_1r_2)^{\alpha_k}}\exp\left(-\frac{\frac{B_0(r_1^2+r_2^2)}2e^{-2tB_0}}{1-e^{-2tB_0}}\right)I_{\alpha_k}
\left(\frac{B_0r_1r_2e^{-tB_0}}{1-e^{-2tB_0}}\right)r_2 \mathrm{d}r_2\mathrm{d}\theta_2\\
&=\Big(\frac{B_0e^{-\alpha tB_0}}{4\pi^2\sinh tB_0}\Big)\int_0^\infty\int_0^{2\pi} e^{-\frac{B_0(r_1^2+r_2^2)\cosh tB_0}{4\sinh tB_0}}f(r_2,\theta_2)\\
&\quad\times\sum_{k\in\Z}e^{ik(\theta_1-\theta_2+itB_0)}I_{\alpha_k}\left(\frac{B_0r_1r_2}{2\sinh tB_0}\right)r_2 \mathrm{d}r_2\mathrm{d}\theta_2.
\end{align*}
Therefore, we obtain the heat kernel
\begin{equation*}
K_H(t; x,y)=\Big(\frac{B_0e^{-\alpha tB_0}}{4\pi^2\sinh tB_0}\Big) e^{-\frac{B_0(r_1^2+r_2^2)\cosh tB_0}{4\sinh tB_0}}\sum_{k\in\Z}e^{ik(\theta_1-\theta_2+itB_0)}I_{\alpha_k}\left(\frac{B_0r_1r_2}{2\sinh tB_0}\right),
\end{equation*}
which gives \eqref{rep:h}. \vspace{0.2cm}

Now we need to verify the inequality \eqref{est:H-radial}. To this end, it suffices to show
\begin{equation}\label{H-Up}
|K_H(t; x,y)|\leq\frac{B_0e^{-\alpha B_0t}}{4\pi^2\sinh tB_0}e^{-\frac{B_0(r_1^2+r_2^2)}{4\tanh tB_0}}\sum_{k\in\Z}e^{-kB_0 t}I_{\alpha_k}\left(\frac{B_0r_1r_2}{2\sinh tB_0}\right).
\end{equation}
Let $z=\frac{B_0r_1r_2}{2\sinh tB_0}>0$ and notice the monotonicity of the modified Bessel function $I_{\mu}(z)$ with respect to the order, in other words,  for fixed $z>0$, $$ I_{\mu}(z)\leq I_{\nu}(z), \quad \mu\geq \nu.$$
Recall $\alpha_k=|k+\alpha|$ and $\alpha\in (0,1)$,  thus we show
\begin{align*}
\sum_{k\in\Z}e^{-kB_0 t}I_{\alpha_k}\left(z\right)
=&\sum_{k\geq0}e^{-kB_0 t}I_{k+\alpha}(z)+\sum_{k\geq1}e^{kB_0 t}I_{k-\alpha}(z)\\
\leq &\sum_{k\geq0}e^{-kB_0 t}I_{k}(z)+e^{B_0 t}\sum_{k\geq0}e^{kB_0 t}I_{k}(z)\\
\leq &e^{B_0 t}\Big(\sum_{k\geq0}e^{-kB_0 t}I_{k}(z)+\sum_{k\geq0}e^{kB_0 t}I_{k}(z)\Big)\\
\leq &e^{B_0 t}\Big(\sum_{k\in\Z}e^{kB_0 t}I_{|k|}(z)+I_{0}(z)\Big)\\
\leq &Ce^{B_0t}(e^z+e^{z\cosh(B_0 t)})\\
\leq &Ce^{B_0 t}e^{z\cosh(B_0 t)}\\
\leq &Ce^{B_0 t}e^{\frac{B_0r_1r_2}{2|\tanh tB_0|}},
\end{align*}
where we use the formula \cite[Eq. (9.6.19)]{AS65}
\begin{equation*}
\sum_{k\in\Z} e^{kt}I_{|k|}(z)=e^{z\cosh(t)},\quad I_0(z)\leq C e^{z}.
\end{equation*}
Combining with \eqref{H-Up}, we have verified \eqref{est:H-radial}.
\end{proof}

We next extend our result of the "on-diagonal" 
kernel estimate
\begin{equation*}
|K_H(t; x,x)|\leq C\frac{B_0e^{(1-\alpha)B_0 t}}{|\sinh tB_0|}
\end{equation*}
to the "off-diagonal".  Let $p_t(x, y)$ denote the heat kernel corresponding to a second-order differential elliptic or sub-elliptic operator,
then the usual theory says that one can automatically improve on-diagonal bounds
$$p_t(x,x)\leq \frac{C}{V(x,\sqrt{t})}$$ to
the typical Gaussian heat kernel upper bound
$$p_t(x,y)\leq \frac{C}{V(x,\sqrt{t})}\exp\Big(-\frac{d^2(x,y)}{Ct}\Big)$$
for all $t>0$ and $x, y$ ranging in the space where the operator acts, for an appropriate function $V$.\vspace{0.2cm}

For our specific operator $H_{\alpha,B_0}$, we prove that
\begin{proposition}\label{prop:Gassian-H}
Let $K_H(t; x,y)$ be in Proposition \ref{prop:H}, then there exists a constant $C$ such that
\begin{equation*}
|K_H(z; x,y)|\leq C (\Re z)^{-1} \exp\Big(-\Re\frac{d^2(x,y)}{Cz}\Big),
\end{equation*}
for all $z\in \C_+$ and $x, y\in \R^2$. In particular, $z=t>0$, then
\begin{equation}\label{est:G-H}
|K_H(t; x,y)|\leq C t^{-1} \exp\Big(-\frac{d^2(x,y)}{Ct}\Big),\quad t>0.
\end{equation}
\end{proposition}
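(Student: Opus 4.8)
The plan is to deduce the complex-time Gaussian bound from the abstract heat-kernel machinery of Coulhon--Sikora \cite{CS08} (see also Grigor'yan \cite{Grig}). On the doubling space $(\R^2,|\cdot|,dx)$ --- where $V(x,\sqrt t)\simeq t$ and the distance attached to $H_{\alpha,B_0}$ is the Euclidean one, since the principal symbol of $-(\nabla+i(A_B+A_{\mathrm{hmf}}))^2$ is $|\xi|^2$, so $d(x,y)=|x-y|$ --- the conjunction of the \emph{on-diagonal} bound $\|e^{-tH_{\alpha,B_0}}\|_{L^1\to L^\infty}\lesssim t^{-1}$ for all $t>0$ with the $L^2$ \emph{Davies--Gaffney} off-diagonal inequality automatically yields the full pointwise Gaussian upper bound, in both real and complex time. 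Thus the task reduces to verifying these two inputs.

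For the on-diagonal bound, Proposition~\ref{prop:H} gives $|K_H(t;x,x)|\leq CB_0e^{(1-\alpha)B_0t}/|\sinh(tB_0)|$; for $0<t\leq1$ one uses $\sinh(tB_0)\simeq B_0t$ to get $\lesssim t^{-1}$, and for $t\geq1$ one uses $\sinh(tB_0)\simeq\tfrac12e^{tB_0}$ together with $\alpha\in(0,1)$ to get $\lesssim e^{-\alpha B_0t}\lesssim t^{-1}$; hence $|K_H(t;x,x)|\lesssim t^{-1}$ for all $t>0$. Since $H_{\alpha,B_0}$ is self-adjoint, $K_H(t;x,y)=\int_{\R^2}K_H(t/2;x,z)\,\overline{K_H(t/2;y,z)}\,dz$, so Cauchy--Schwarz gives $|K_H(t;x,y)|\leq K_H(t;x,x)^{1/2}K_H(t;y,y)^{1/2}\lesssim t^{-1}$, i.e. $\|e^{-tH_{\alpha,B_0}}\|_{L^1\to L^\infty}\lesssim t^{-1}$, and by duality and the semigroup property $\|e^{-tH_{\alpha,B_0}}\|_{L^1\to L^2}=\|e^{-tH_{\alpha,B_0}}\|_{L^2\to L^\infty}\lesssim t^{-1/2}$.

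For the Davies--Gaffney inequality $\|e^{-tH_{\alpha,B_0}}f\|_{L^2(E)}\leq e^{-d(E,F)^2/(4t)}\|f\|_{L^2(F)}$, valid for disjoint open $E,F$ and $\text{supp}\,f\subset F$, I would use Davies' exponential-perturbation trick: for a bounded $1$-Lipschitz weight $\psi$ and $\rho>0$, the Leibniz rule $\nabla_{\alpha,B_0}(e^{\pm\rho\psi}w)=e^{\pm\rho\psi}\big(\nabla_{\alpha,B_0}w\pm\rho(\nabla\psi)w\big)$ gives, for the conjugated operator $H_\rho:=e^{\rho\psi}H_{\alpha,B_0}e^{-\rho\psi}$, the identity $\Re\langle H_\rho w,w\rangle=\|\nabla_{\alpha,B_0}w\|_{L^2}^2-\rho^2\big\||\nabla\psi|\,w\big\|_{L^2}^2\geq-\rho^2\|w\|_{L^2}^2$, whence $\|e^{-tH_\rho}\|_{L^2\to L^2}\leq e^{\rho^2t}$ by Gr\"onwall; taking $\psi=\min(d(\cdot,F),R)$ with $R$ large, writing $e^{-tH_{\alpha,B_0}}=e^{-\rho\psi}e^{-tH_\rho}e^{\rho\psi}$ and optimizing in $\rho$ (with $\rho=d(E,F)/2t$) gives the estimate. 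This is the step I expect to be the main obstacle, precisely because $A_B+A_{\mathrm{hmf}}\notin L^2_{\mathrm{loc}}(\R^2)$: one must check that multiplication by the bounded Lipschitz weight $e^{\pm\rho\psi}$ maps the form domain $\HH^1_{\alpha,B_0}$ into itself (which the Leibniz rule above provides) and that $\mathcal{C}_c^\infty(\R^2\setminus\{0\})$ remains a form core for $H_\rho$, so that the conjugation identity is legitimate; mollifying $\psi$ near the origin reduces this to the standard situation.

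Finally, feeding the two inputs into \cite{CS08} (whose Phragm\'en--Lindel\"of argument exploits the analyticity of $z\mapsto e^{-zH_{\alpha,B_0}}$ on $\C_+$ and the contractivity $\|e^{-zH_{\alpha,B_0}}\|_{L^2\to L^2}\leq1$ there) upgrades the on-diagonal bound and Davies--Gaffney to $|K_H(z;x,y)|\leq C(\Re z)^{-1}\exp\big(-\Re\frac{|x-y|^2}{Cz}\big)$ for all $z\in\C_+$, and specializing to $z=t>0$ gives \eqref{est:G-H}. (For real $t$ one could instead extract the full factor $e^{-B_0|x-y|^2/(4\tanh tB_0)}$ directly from the series \eqref{rep:h}, since the prefactor already carries $e^{-B_0(r_1^2+r_2^2)\cosh(tB_0)/(4\sinh tB_0)}$ and the $k$-sum --- estimated as in the proof of \eqref{est:H-radial} but keeping the angular phase rather than discarding it --- supplies the missing cross term; the Davies--Gaffney route is preferred here because it treats the order $\alpha_k=|k+\alpha|$ of the Bessel functions more transparently and passes to complex time at no extra cost.)
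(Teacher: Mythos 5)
Your proposal is correct and follows the paper's overall architecture: on-diagonal bound from Proposition \ref{prop:H} plus an $L^2$ Davies--Gaffney estimate, fed into the Coulhon--Sikora theorem \cite{CS08} to produce the complex-time Gaussian bound, with the polynomial factor absorbed into the exponential for $z=t>0$. Your treatment of the on-diagonal bound (small/large $t$ split of $e^{(1-\alpha)B_0t}/\sinh(tB_0)$) matches the paper's. Where you genuinely diverge is the Davies--Gaffney step: the paper proves \eqref{est:DG} via Grigor'yan's integrated maximum principle, i.e.\ monotonicity of the weighted quantity $J(\tau)=\int|e^{-\tau H_{\alpha,B_0}}f|^2e^{\xi(\tau,x)}dx$ with $\xi(\tau,x)=d^2(x,A^c_\rho)/(2(\tau-s))$, an argument that runs through a Dirichlet exhaustion ($e^{-\tau H^{\Omega_i}_{\alpha,B_0}}\to e^{-\tau H_{\alpha,B_0}}$), the Green formula, and the pointwise diamagnetic inequality $|\nabla|u||\leq|(\nabla+i(A_B+A_{\mathrm{hmf}}))u|$; you instead use Davies' exponential perturbation, conjugating by $e^{\rho\psi}$ with a bounded $1$-Lipschitz weight, observing that the cross term in $\langle H_\rho w,w\rangle$ is purely imaginary so $\Re\langle H_\rho w,w\rangle\geq-\rho^2\|w\|^2$, and then applying Gr\"onwall and optimizing $\rho=d(E,F)/(2t)$. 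Both routes exploit the same structural fact --- the magnetic form interacts with Lipschitz weights exactly as the free Dirichlet form does, so $A_B+A_{\mathrm{hmf}}\notin L^2_{\mathrm{loc}}$ only enters through domain bookkeeping --- but your version avoids the pointwise diamagnetic inequality and the exhaustion argument entirely, at the cost of the form-domain/core verification you correctly flag (note that the issue there is not really the behaviour of $\psi$ near the origin, which is harmless since $\psi$ is globally Lipschitz and bounded, but precisely the fact that bounded Lipschitz multipliers preserve the Friedrichs form domain built from $\mathcal{C}_c^\infty(\R^2\setminus\{0\})$, which your Leibniz-rule computation supplies). The paper's method is more self-contained at the level of pointwise integrands; yours is shorter and transfers verbatim to any sectorial magnetic form.
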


\begin{remark} One usual way to prove the Gaussian bounds for the magnetic Schr\"odinger operator is to apply the important diamagnetic
inequality
\begin{equation}\label{diamagnetic-h}
\Big|\Big(e^{t(\nabla+iA(x))^2}f\Big)(x)\Big|\leq \Big(e^{t\Delta}|f|\Big)(x),
\end{equation}
which relates estimates on the magnetic Schr\"odinger operator semigroup to estimates
on the free heat semigroup. The obvious disadvantage of using \eqref{diamagnetic-h} is that all the effects of the magnetic field
are completely eliminated. To our best knowledge, \eqref{diamagnetic-h} is available for $A(x)\in L^2_{\mathrm{loc}}$, see \cite{Simon}. Unfortunately, our magnetic potential $A(x)=A_B(x)+A_{\mathrm{hmf}}(x)\notin L^2_{\mathrm{loc}}(\R^2)$.
\end{remark}

\begin{remark} To recover some magnetic effects, it would be tempting to prove
\begin{equation}\label{diamagnetic-h1}
\Big|\Big(e^{-tH_{\alpha,B_0}}f\Big)(x)\Big|\leq \Big(e^{-tH_{0,B_0}}|f|\Big)(x),
\end{equation}
or
\begin{equation}\label{diamagnetic-h2}
\Big|\Big(e^{-tH_{\alpha,B_0}}f\Big)(x)\Big|\leq \Big(e^{-tH_{\alpha, 0}}|f|\Big)(x).
\end{equation}
If \eqref{diamagnetic-h1} was available,
\begin{equation}\label{diamagnetic-h1-g}
\big|e^{-tH_{\alpha,B_0}}(x,y)\big|\lesssim \big|e^{-tH_{0,B_0}}(x,y)\big|\lesssim \frac{1}{\sinh(B_0t)}e^{-\frac{B_0|x-y|^2}{4\tanh(B_0t)}},
\end{equation}
where we use the Mehler heat kernel of $e^{-tH_{0,B_0}}$ (e.g. \cite[P168]{Simon2})
\begin{equation}\label{equ:Meh}
e^{-tH_{0,B_0}}(x,y)=\frac{B_0}{4\pi\sinh(B_0t)}e^{-\frac{B_0|x-y|^2}{4\tanh(B_0t)}-\frac{iB_0}2(x_1y_2-x_2y_1)}.
\end{equation}
If \eqref{diamagnetic-h2} was available, we obtain
\begin{equation}\label{diamagnetic-h2-b}
\big|e^{-tH_{\alpha,B_0}}(x,y)\big|\lesssim \big|e^{-tH_{\alpha,0}}(x,y)\big|\lesssim  \frac{1}{t}e^{-\frac{|x-y|^2}{4t}}.
\end{equation}
We refer to \cite[Proposition 3.2]{FZZ22} for the last  Gaussian upper bounds for  $e^{-tH_{\alpha, 0}}(x,y)$.
Since for $t\geq 0$, one has
$$\sinh(t)\geq t, \quad t/\tanh(t)\geq 1,$$  hence \eqref{diamagnetic-h2-b} is weaker than \eqref{diamagnetic-h1-g}. Unfortunately,
as pointed out in \cite{LT},  the semigroup generated by the magnetic Schr\"odinger operator  is not Markovian, in fact not even positivity
preserving which is important in the theory of comparison of heat semigroups. So the truth of \eqref{diamagnetic-h1} and \eqref{diamagnetic-h2} is not known, we refer to \cite{LT}.
\end{remark}

\begin{remark} The Ganssian decay on the right side of \eqref{est:G-H} is the one of the heat kernel, which
is considerably weaker than the decay of the Mehler kernel \eqref{diamagnetic-h1-g}. Similarly as \cite{LT}, we ask that how to prove
\begin{equation*}
|K_H(t; x,y)|\lesssim \frac{B_0e^{-\alpha B_0 t}}{|\sinh tB_0|}e^{-\frac{|x-y|^2}{C\tanh(B_0t)}},\quad t>0.
\end{equation*}
The truth of this estimate would reveal a
robust dependence of the magnetic heat kernel on the magnetic field. In our case, we give a positive answer to this problem by proving \eqref{est:heatkernel:2}
in the subsequent subsection.

\end{remark}

%

\begin{proof} We prove \eqref{est:G-H} by using \cite[Theorem 4.2]{CS08}. The Theorem claims that
if $(M, d, \mu, L)$ satisfies the Davies-Gaffney estimates, that is,
\begin{equation*}
|\langle e^{-tL}f, g\rangle |\leq \|f\|_{L^2(U_1)}\|g\|_{L^2(U_2)} e^{-\frac{d^2(U_1,U_2)}{4t}}
\end{equation*}
for all $t>0$, $U_i\subset M$ with $i=1,2$ and $f\in L^2(U_1, d\mu)$, $g\in L^2(U_2, d\mu)$ and
 $d(U_1,U_2)=\inf\{\rho=|x-y|: x\in U_1, y\in U_2\}$.
If, for some $K$ and $D>0$,
\begin{equation*}
e^{-tL}(x,x)\leq K t^{-\frac{D}2},\qquad \forall t>0, \quad x\in M,
\end{equation*}
then
\begin{equation*}
|e^{-zL}(x,y)|\leq K (\Re z)^{-\frac{D}2}\Big(1+\Re\frac{d^2(x,y)}{4z}\Big)^{\frac D2}\exp\Big(-\Re\frac{d^2(x,y)}{4z}\Big)
\end{equation*}
for all $z\in \C_+$ and $x, y\in M$.\vspace{0.2cm}

For our model $M=\R^2$ and $L=H_{\alpha,B_0}$, we need to verify the on-diagonal estimates
\begin{equation}\label{est:on-dia}
e^{-tH_{\alpha,B_0}}(x,x)\leq K t^{-1},\qquad \forall t>0, \quad x\in M,
\end{equation}
and the Davies-Gaffney estimates
\begin{equation}\label{est:DG-H}
|\langle e^{-tH_{\alpha,B_0}}f, g\rangle |\leq \|f\|_{L^2(U_1)}\|g\|_{L^2(U_2)} e^{-\frac{d^2(U_1,U_2)}{4t}}.
\end{equation}
If this has been done, for $z=t>0$ and $D=2$ and $\epsilon>0$, then
\begin{equation*}
\begin{split}
|e^{-tH_{\alpha,B_0}}(x,y)|&\leq Ct^{-1}\Big(1+\frac{|x-y|^2}{4t}\Big)\exp\Big(-\frac{|x-y|^2}{4t}\Big)\\
&\leq Ct^{-1}\exp\Big(-\frac{|x-y|^2}{(4+\epsilon)t}\Big).
\end{split}
\end{equation*}
Therefore, it suffices to verify \eqref{est:on-dia} and \eqref{est:DG-H}. Since $x=y$ and $\frac{B_0e^{-\alpha B_0t}}{4\pi^2|\sinh tB_0|}\leq C t^{-1}$, the estimate \eqref{est:on-dia} is a direct consequence of \eqref{est:H-radial}.
However, the inequality \eqref{est:DG-H} is more complicated, this is a consequence of  \eqref{est:DG} below.

\end{proof}

\begin{proposition}[Davies-Gaffney inequality] Let $A$ and $B$ be two disjoint measurable sets in $\R^2$ and suppose that $f\in L^2(A)$ and $g\in L^2(B)$ such that $\mathrm{supp}(f)\subset A$ and $\mathrm{supp}(g)\subset B$. Then
\begin{equation}\label{est:DG}
|\langle e^{-tH_{\alpha,B_0}}f, g\rangle |\leq \|f\|_{L^2(A)}\|g\|_{L^2(B)} e^{-\frac{d^2(A,B)}{4t}}
\end{equation}
where $d(A,B)=\inf\{\rho=|x-y|: x\in A, y\in B\}$.
\end{proposition}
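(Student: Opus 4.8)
The plan is to establish the Davies--Gaffney inequality \eqref{est:DG} by the classical Davies exponential perturbation method (see Davies, \emph{Heat Kernels and Spectral Theory}, and the adaptation in \cite{Grig}), adapted to the magnetic operator $H_{\alpha,B_0}$. Fix a bounded Lipschitz function $\psi$ on $\R^2$ with $|\nabla\psi|\le 1$ a.e., fix a parameter $s\in\R$, and introduce the twisted semigroup $T_s(t):=e^{s\psi}\,e^{-tH_{\alpha,B_0}}\,e^{-s\psi}$. Setting $u(t):=T_s(t)f$ for $f\in L^2$, the function $u$ solves (weakly) $\partial_t u = -e^{s\psi}H_{\alpha,B_0}e^{-s\psi}u$, and I would compute $\tfrac12\tfrac{d}{dt}\|u(t)\|_{L^2}^2 = -\Re\langle e^{s\psi}H_{\alpha,B_0}e^{-s\psi}u, e^{s\psi}u\rangle$ on the form level. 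The key algebraic fact is the gauge-type identity for the magnetic gradient $\nabla_{\alpha,B_0}=\nabla + i(A_B+A_{\mathrm{hmf}})$: conjugation by the \emph{real} exponential weight gives $e^{s\psi}\nabla_{\alpha,B_0}(e^{-s\psi}v) = \nabla_{\alpha,B_0}v - s(\nabla\psi)v$, since the real weight $e^{-s\psi}$ does not interact with the imaginary magnetic term. Hence the quadratic form of the conjugated operator is
\begin{equation*}
Q_s(v):=\int_{\R^2}\big|\nabla_{\alpha,B_0}v - s(\nabla\psi)v\big|^2\,dx,
\end{equation*}
and expanding the square yields $\Re Q_s(v) \ge Q_{\alpha,B_0}(v) - 2|s|\,\|\,|\nabla\psi|\,v\,\|_{L^2}\|\nabla_{\alpha,B_0}v\|_{L^2} + \dots$; a Cauchy--Schwarz/Young step together with $|\nabla\psi|\le 1$ and the positivity of $Q_{\alpha,B_0}$ gives the one-sided bound $\Re\langle e^{s\psi}H_{\alpha,B_0}e^{-s\psi}v, v\rangle \ge -s^2\|v\|_{L^2}^2$. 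This is the only place the magnetic structure enters, and it is precisely where having worked with the form $Q_{\alpha,B_0}$ from Section \ref{sec:pre} pays off.

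From the differential inequality $\tfrac{d}{dt}\|u(t)\|_{L^2}^2 \le 2s^2\|u(t)\|_{L^2}^2$ and $u(0)=e^{s\psi}f\cdot e^{-s\psi}$, Gronwall gives $\|e^{s\psi}e^{-tH_{\alpha,B_0}}e^{-s\psi}f\|_{L^2}\le e^{s^2 t}\|f\|_{L^2}$, i.e. the weighted operator-norm bound
\begin{equation*}
\big\|e^{s\psi}\,e^{-tH_{\alpha,B_0}}\,e^{-s\psi}\big\|_{L^2\to L^2}\le e^{s^2 t},\qquad s\in\R.
\end{equation*}
Now take $A,B$ disjoint measurable sets, $f\in L^2$ supported in $A$, $g\in L^2$ supported in $B$, and choose $\psi(x):=\min\{\operatorname{dist}(x,A),\,d(A,B)\}$, which is $1$-Lipschitz, equals $0$ on $A$, and equals $d(A,B)$ on $B$. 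Then, assuming $s\ge 0$,
\begin{equation*}
|\langle e^{-tH_{\alpha,B_0}}f, g\rangle| = \big|\langle e^{s\psi}e^{-tH_{\alpha,B_0}}e^{-s\psi}(e^{s\psi}f),\, e^{-s\psi}g\rangle\big| \le e^{s^2 t}\,\|e^{s\psi}f\|_{L^2}\,\|e^{-s\psi}g\|_{L^2}.
\end{equation*}
Since $\psi=0$ on $\operatorname{supp}f\subset A$ we have $\|e^{s\psi}f\|_{L^2(A)}=\|f\|_{L^2(A)}$, and since $\psi=d(A,B)$ on $\operatorname{supp}g\subset B$ we have $\|e^{-s\psi}g\|_{L^2(B)}=e^{-s\,d(A,B)}\|g\|_{L^2(B)}$. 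Thus
\begin{equation*}
|\langle e^{-tH_{\alpha,B_0}}f, g\rangle|\le e^{s^2 t - s\,d(A,B)}\,\|f\|_{L^2(A)}\,\|g\|_{L^2(B)},
\end{equation*}
and optimizing the exponent over $s\ge 0$ by taking $s = d(A,B)/(2t)$ produces the factor $e^{-d^2(A,B)/(4t)}$, which is exactly \eqref{est:DG}.

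The main obstacle is making the formal computation of $\tfrac{d}{dt}\|u(t)\|_{L^2}^2$ rigorous: one must justify differentiating under the integral and interpret $e^{s\psi}H_{\alpha,B_0}e^{-s\psi}$ as a well-defined operator with form domain $\HH^1_{\alpha,B_0}$ (multiplication by the bounded weights $e^{\pm s\psi}$ preserves this domain because $\psi$ is bounded Lipschitz and $\nabla_{\alpha,B_0}(e^{\pm s\psi}v) = e^{\pm s\psi}(\nabla_{\alpha,B_0}v \pm s(\nabla\psi)v)\in L^2$). The standard remedy is to work first with the bounded, sectorial approximations or directly with the quadratic-form formulation: the twisted form $Q_s$ is closed and sectorial with numerical range controlled as above, so it generates a holomorphic semigroup coinciding with $e^{s\psi}e^{-tH_{\alpha,B_0}}e^{-s\psi}$, and the norm bound follows from the Lumer--Phillips/numerical-range estimate $\|e^{-tT}\|\le e^{t\,\sup(-\Re\,\mathrm{num.range})}$. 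A secondary technical point is that $\psi$ as defined is only Lipschitz, not smooth, so $\nabla\psi$ is an $L^\infty$ function defined a.e.\ with $\|\nabla\psi\|_\infty\le 1$; this is harmless throughout. With these points handled, the argument above goes through verbatim, and it simultaneously supplies the Davies--Gaffney bound \eqref{est:DG-H} needed to complete the proof of Proposition \ref{prop:Gassian-H}.
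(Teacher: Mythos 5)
Your argument is correct, but it takes a genuinely different route from the paper. You use Davies' exponential perturbation method: conjugate the heat semigroup by $e^{s\psi}$ with a bounded $1$-Lipschitz weight $\psi$ vanishing on $A$ and equal to $d(A,B)$ on $B$, prove the weighted bound $\|e^{s\psi}e^{-tH_{\alpha,B_0}}e^{-s\psi}\|_{L^2\to L^2}\le e^{s^2t}$, and optimize over $s$. The paper instead follows Grigor'yan's integrated maximum principle: it introduces the time-dependent weight $\xi(\tau,x)=d^2(x,A^c_\rho)/(2(\tau-s_0))$ for an auxiliary time $s_0>t$, shows that $J(\tau)=\int_{\R^2}|e^{-\tau H_{\alpha,B_0}}f|^2e^{\xi(\tau,x)}\,dx$ is non-increasing (after reducing to $f\ge 0$ and exhausting $\R^2$ by relatively compact domains with Dirichlet restrictions, following \cite{Grig}), and then deduces \eqref{est:DG} by Cauchy--Schwarz; the eikonal inequality $\partial_\tau\xi+\tfrac12|\nabla\xi|^2\le 0$ plays the role that your optimization in $s$ plays. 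In both proofs the magnetic structure enters only mildly: the paper uses the pointwise diamagnetic inequality $|\nabla|u||\le|(\nabla+i(A_B+A_{\mathrm{hmf}}))u|$, while you use that a real weight passes through the magnetic gradient, $\nabla_{\alpha,B_0}(e^{\pm s\psi}v)=e^{\pm s\psi}\bigl(\nabla_{\alpha,B_0}v\pm s(\nabla\psi)v\bigr)$, together with positivity of $Q_{\alpha,B_0}$. Your route yields the reusable weighted operator bound and both give the same constant $e^{-d^2(A,B)/(4t)}$; the price is that you must handle the non-self-adjoint twisted generator via sectorial forms, whereas the paper stays with the self-adjoint operator and pays instead with the exhaustion and positivity reductions. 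Two small inaccuracies in your write-up, neither fatal: the sesquilinear form of $e^{s\psi}H_{\alpha,B_0}e^{-s\psi}$ is not $\int|\nabla_{\alpha,B_0}v-s(\nabla\psi)v|^2\,dx$ (the conjugated operator is not symmetric); it is $\int\bigl(\nabla_{\alpha,B_0}v-s(\nabla\psi)v\bigr)\cdot\overline{\bigl(\nabla_{\alpha,B_0}v+s(\nabla\psi)v\bigr)}\,dx$, whose cross terms are purely imaginary, so in fact $\Re\langle e^{s\psi}H_{\alpha,B_0}e^{-s\psi}v,v\rangle=\|\nabla_{\alpha,B_0}v\|_{L^2}^2-s^2\|\,|\nabla\psi|v\|_{L^2}^2\ge-s^2\|v\|_{L^2}^2$, which is exactly the bound you need; and in the derivative computation the pairing should be $-\Re\langle e^{s\psi}H_{\alpha,B_0}e^{-s\psi}u,u\rangle$ rather than against $e^{s\psi}u$.
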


\begin{proof} Let $\rho=d(A,B)$ and define
$$A_{\rho}=\{x\in\R^2: d(x, A)<\rho\}, \quad A^c_{\rho}=\R^2\setminus A_{\rho}$$
where $d(x,A)=\inf\{|x-y|: y\in A\}$.
Then $B\subset A^c_{\rho}$, furthermore, by Cauchy-Schwartz inequality, we have
\begin{equation*}
\begin{split}
|\langle e^{-tH_{\alpha,B_0}}f, g\rangle |&\leq \Big(\int_{B} |e^{-tH_{\alpha,B_0}}f|^2 dx\Big)^{1/2} \|g\|_{L^2(B)}\\
&\leq \Big(\int_{A^c_{\rho}} |e^{-tH_{\alpha,B_0}}f|^2 dx\Big)^{1/2} \|g\|_{L^2(B)}.
\end{split}
\end{equation*}
Therefore, \eqref{est:DG} follows if we could prove
\begin{equation}\label{est:DG1}
\begin{split}
\int_{A^c_{\rho}} |e^{-tH_{\alpha,B_0}}f|^2 dx\leq  \|f\|^2_{L^2(A)} e^{-\frac{d^2(A,B)}{2t}}.
\end{split}
\end{equation}
To this end, for any fixed $s>t$ and $x\in\R^2$ and $\tau\in [0,s)$, we define the function
\begin{equation*}
\xi(\tau, x):=\frac{d^2(x, A^c_{\rho})}{2(\tau-s)},
\end{equation*}
and set
\begin{equation}\label{J:funct}
J(\tau):=\int_{\R^2} \big|e^{-\tau H_{\alpha,B_0}}f\big|^2 e^{\xi(\tau, x)}\, dx.
\end{equation}

\begin{lemma}\label{lem: J} For the function defined in \eqref{J:funct}, we have that
\begin{equation}\label{est:J}
J(t)\leq J(0).
\end{equation}
\end{lemma}
We assume \eqref{est:J} to prove \eqref{est:DG1} by postponing the proof for a moment.
Since $x\in A^c_{\rho}$, one has $\xi(\tau, x)=0$, thus
\begin{equation}\label{est:DG2}
\begin{split}
\int_{A^c_{\rho}} |e^{-tH_{\alpha,B_0}}f|^2 dx\leq  \int_{\R^2} \big|e^{-t H_{\alpha,B_0}}f\big|^2 e^{\xi(t, x)}\, dx=J(t).
\end{split}
\end{equation}
For $t=0$, since
\begin{equation*}
e^{\xi(0,x)}\leq
\begin{cases}
 1,&\qquad x\in A^c_{\rho};\\
e^{-\frac{d^2(x, A^c_{\rho})}{2s}},&\qquad x\in A,
\end{cases}
\end{equation*} we see
\begin{equation}\label{J0}
J(0)=\int_{\R^2} f(x)^2 e^{\xi(0, x)}\, dx\leq  \int_{A^c_{\rho}} |f(x)|^2 \, dx+\exp\Big(-\frac{\rho^2}{2s}\Big)\int_{A} |f(x)|^2 \, dx.
\end{equation}
By using \eqref{est:DG2}, \eqref{est:J} and \eqref{J0} and taking $s\to t+$, we obtain
\begin{equation*}
\begin{split}
&\int_{A^c_{\rho}} |e^{-tH_{\alpha,B_0}}f|^2 dx\leq J(t)\leq J(0)\\
&\leq C\left( \int_{A^c_{\rho}} |f(x)|^2 \, dx+\exp\Big(-\frac{\rho^2}{2s}\Big)\int_{A} |f(x)|^2 \, dx\right)\\
&\leq  \int_{A^c_{\rho}} |f(x)|^2 \, dx+\exp\Big(-\frac{\rho^2}{2t}\Big)\|f\|^2_{L^2(A)}.
\end{split}
\end{equation*}
Since $f\in L^2(A)$ and $\mathrm{supp}(f)\subset A$, then $\int_{A^c_{\rho}} |f(x)|^2 \, dx=0$ which implies \eqref{est:DG1}.

\end{proof}
Now it remains to prove \eqref{est:J} in Lemma \ref{lem: J}.
\begin{proof}[The proof of Lemma \ref{lem: J}] Indeed, we need to prove that the function $J(\tau)$ defined in \eqref{J:funct} is non-increasing in $\tau\in [0, s)$. We closely follow the argument of  the
integrated maximum principle \cite[Theorem 12.1]{Grig}. Furthermore, for all
$\tau, \tau_0 \in [0, s)$, if $\tau>\tau_0$, then
\begin{equation}\label{est:J'}
J(\tau)\leq J(\tau_0).
\end{equation}
which shows \eqref{est:J} by taking $\tau=t$ and $\tau_0=0$. Without loss of generality, we assume $f\geq 0$ in \eqref{J:funct}. Indeed, if $f$ has a change sign, we set
$g= |e^{-\tau_0 H_{\alpha,B_0}}f|\ge 0$, then
$$ |e^{-\tau H_{\alpha,B_0}}f|= |e^{-(\tau -\tau_0)H_{\alpha,B_0}}e^{ -\tau_0H_{\alpha,B_0}}f|\leq e^{-(\tau -\tau_0)H_{\alpha,B_0}} g.$$
Assume that \eqref{est:J'} holds for $g\geq 0$, then
\begin{equation*}
\begin{split}
J(\tau)&=\int_{\R^2} \big(e^{-\tau H_{\alpha,B_0}}f\big)^2 e^{\xi(\tau, x)}\, dx\\
&\leq \int_{\R^2} \big(e^{-(\tau-\tau_0) H_{\alpha,B_0}}g\big)^2 e^{\xi(\tau, x)}\, dx\\
&\leq \int_{\R^2} g^2(x) e^{\xi(\tau_0, x)} dx\\&
= \int_{\R^2} \big(e^{ -\tau_0H_{\alpha,B_0}}f\big)^2 e^{\xi(\tau_0, x)} dx\\
&= J(\tau_0).
\end{split}
\end{equation*}
From now on, we assume $f\geq 0$. By using \cite[Theorem 5.23]{Grig}(which claims $e^{-\tau H^{\Omega_i}_{\alpha,B_0}}f\to e^{-\tau H_{\alpha,B_0}}f$ in $L^2$ as $i\to+\infty$ where $\Omega_0\subset\Omega_1\subset\cdots\subset\Omega_i\subset\cdots\to \R^2$ as $i\to+\infty$ ), it suffices to show that, for any relatively compact open set $\Omega\subset \R^2$,
the function
\begin{equation*}
J_\Omega (\tau):=\int_{\Omega} \big|e^{-\tau H^{\Omega}_{\alpha,B_0}}f\big|^2 e^{\xi(\tau, x)}\, dx
\end{equation*}
 is non-increasing in $\tau\in [0, s)$, where $H^{\Omega}_{\alpha,B_0}$ is the Dirichelet Laplace operator in $\Omega$
 $$H^{\Omega}_{\alpha,B_0}=H_{\alpha,B_0}\big|_{W^2_0(\Omega)\cap D(H_{\alpha,B_0})}.$$ To this end, we need to prove the derivative on $J_\Omega (\tau)$ w.r.t. $\tau$ is non-positive.

 By using \cite[Theorem 4.9]{Grig}, the function $u(t,\cdot)=e^{-t H^{\Omega}_{\alpha,B_0}}f$ is strongly differentiable in $L^2(\Omega)$ and its strong derivative
$\frac{du}{dt}$ in $L^2(\Omega)$ is given by
\begin{equation*}
\frac{du}{dt}=-H^{\Omega}_{\alpha,B_0} u.
\end{equation*}
Then we have
 \begin{equation}\label{d-J}
\begin{split}
\frac{d J_\Omega(\tau)}{d\tau}&=\Re \frac{d}{d\tau}\langle u, ue^{\xi(\tau,x)}\rangle\\
&=\Re\langle \frac{d u}{d\tau}, ue^{\xi(\tau,x)}\rangle+ \Re \langle u, \frac{ d(ue^{\xi(\tau,x)})}{d\tau}\rangle\\
&=2\Re\langle \frac{d u}{d\tau}, ue^{\xi(\tau,x)}\rangle+ \langle |u|^2, \frac{ d(e^{\xi(\tau,x)})}{d\tau}\rangle\\
&=2\Re\langle -H^{\Omega}_{\alpha,B_0} u, ue^{\xi(\tau,x)}\rangle+ \langle |u|^2, \frac{ d(e^{\xi(\tau,x)})}{d\tau}\rangle.
\end{split}
\end{equation}
Since $e^{\xi(\tau,\cdot)}\in \mathrm{Lip}_{\mathrm{loc}}(\R^2)$, one has $e^{\xi(\tau,\cdot)}\in \mathrm{Lip}(\Omega)$. The solution $u(t, \cdot)\in W^1_0(\Omega)$,
hence $e^{\xi(\tau,\cdot)}u(t, \cdot)\in W^1_0(\Omega)$.
On the one hand, recall the operator
 \begin{equation*}
H^{\Omega}_{\alpha,B_0} =- (\nabla+i(A_B(x)+A_{\mathrm{hmf}}(x)))^2,
\end{equation*}
by using the Green formula, we obtain
\begin{align}\label{d-J1}
&2\Re \langle -H^{\Omega}_{\alpha,B_0} u, ue^{\xi(\tau,x)}\rangle=2\langle (\nabla+i(A_B(x)+A_{\mathrm{hmf}}(x)))^2 u, ue^{\xi(\tau,x)}\rangle\\
&=-2\int_{\Omega} |(\nabla+i(A_B(x)+A_{\mathrm{hmf}}(x))) u|^2 e^{\xi(\tau,x)}\,dx-2\Re \int_{\Omega} \nabla u\cdot\nabla \xi e^{\xi(\tau,x)}\bar{u}\, dx. \nonumber
\end{align}
On the other hand, we observe that
$$\frac{ d(e^{\xi(\tau,x)})}{dt}=e^{\xi(\tau,x)}\frac{\partial \xi}{\partial\tau},$$
then
 \begin{equation}\label{d-J2}
\begin{split}
 \langle |u|^2, \frac{ d(e^{\xi(\tau,x)})}{dt}\rangle&= \int_{\Omega} |u|^2 e^{\xi(\tau,x)}\frac{\partial \xi(\tau,x)}{\partial\tau} \,dx\\
 &\leq
-\frac12 \int_{\Omega} |u|^2 e^{\xi(\tau,x)}|\nabla \xi|^2 \,dx.
\end{split}
\end{equation}
This is because that we can verify that the function $\xi(\tau, x)$ satisfies
$$\frac{\partial \xi}{\partial \tau}+\frac12|\nabla \xi|^2=-\frac{d^2(x, A^c_{\rho})}{2(\tau-s)^2}
+\frac12 \Bigg(\frac{d(x, A^c_{\rho})|\nabla d (x, A^c_{\rho})|}{2(\tau-s)}\Bigg)^2\leq -\frac34\frac{d^2(x, A^c_{\rho})}{2(\tau-s)^2}\leq 0,$$
since $\|\nabla f\|_{L^\infty}\leq \|f\|_{\mathrm{Lip}}$ and the function $x\mapsto d(x,E)$ is Lipschitz function with Lipschitz norm $1$, see \cite[Lemma 11.2 and Theorem 11.3]{Grig}.
Therefore, by collecting \eqref{d-J}, \eqref{d-J1} and \eqref{d-J2}, we finally show
\begin{equation*}
\begin{split}
\frac{d J_\Omega(\tau)}{d\tau}&\leq -2\int_{\Omega} |(\nabla+i(A_B(x)+A_{\mathrm{hmf}}(x))) u|^2 e^{\xi(\tau,x)}\,dx\\
&\quad -2\int_{\Omega} \Big(\Re \big(\nabla u\cdot\nabla \xi \bar{u} \big)+\frac14 |u|^2 |\nabla \xi|^2 \Big) e^{\xi(\tau,x)}\,dx.
\end{split}
\end{equation*}
On the one hand, we notice that
 $$\Re(\nabla u\cdot\nabla\xi \bar{u})=\frac{1}2\big((\nabla u\cdot\nabla\xi \bar{u})+(\nabla \bar{u}\cdot\nabla\xi u)\big)=\frac{1}2\big(\nabla |u|^2\cdot\nabla\xi \big)=(\nabla |u|) \cdot\nabla\xi |u|.$$
Therefore, we have
 \begin{equation*}
\begin{split}
\frac{d J_\Omega(\tau)}{d\tau}&\leq -2\int_{\Omega} \big(|(\nabla+i(A_B(x)+A_{\mathrm{hmf}}(x))) u|^2-|\nabla |u||^2\big) e^{\xi(\tau,x)}\,dx\\
&\quad -2\int_{\Omega} \Big(|\nabla |u||^2+(\nabla |u|) \cdot\nabla\xi |u|+\frac14 |u|^2 |\nabla \xi|^2 \Big) e^{\xi(\tau,x)}\,dx\\
&=-2\int_{\Omega} \big(|(\nabla+i(A_B(x)+A_{\mathrm{hmf}}(x))) u|^2-|\nabla |u||^2\big) e^{\xi(\tau,x)}\,dx\\
&\quad -2\int_{\Omega} \big|\nabla |u|+\frac12 |u| \nabla \xi \big|^2 e^{\xi(\tau,x)}\,dx.
\end{split}
\end{equation*}
On the other hand, the diamagnetic inequality shows that
  \begin{equation*}
  \begin{split}
|\nabla |u|(x)|=\Big|\Re\big(\frac{\bar{u}(x)}{|u(x)|}\nabla u(x)\big) \Big|&= \Big|\Re\Big(\big(\nabla u(x)+i(A_B+A_{\mathrm{hmf}}) u(x)\big)\frac{\bar{u}(x)}{|u(x)|}\Big) \Big|\\
&\leq  \Big|\big(\nabla +i(A_B+A_{\mathrm{hmf}})\big) u(x)\Big|.
\end{split}
 \end{equation*}
Therefore,  we finally prove that
 \begin{equation*}
\begin{split}
\frac{d J_\Omega(\tau)}{d\tau}&\leq 0
\end{split}
\end{equation*}
Consequently, \eqref{est:J'} follows, which implies Lemma \ref{lem: J}.

\end{proof}

\subsection{Method II}

In this subsection, we will use the Schulman-Sunada formula (which is the second method we used in \cite{WZZ} to construct the Schr\"odinger propagator)
to reconstruct the heat propagator. For more about the Schulman-Sunada formula, we refer the reads to \cite{stov,stov1}.
The representation and estimate of the heat kernel capture more magnetic effects.

Let $M=\R^2\setminus\{\vec{0}\}=(0,+\infty)\times \mathbb{S}^1$ where $\mathbb{S}^1$ is the unit circle. The universal covering space of $M$ is $\tilde{M}=(0,+\infty)\times \R$, then $M=\tilde{M}/\Gamma$ where the structure group $\Gamma=2\pi\Z$ acts in the second factor of the Cartesian product. Then Schulman's ansatz (see \cite{stov,stov1}) enables us to compute the heat propagator $e^{-t H_{\alpha, B_0}}$ on $M$ by using the heat propagator $e^{-t \tilde{H}_{\alpha, B_0}}$ (see the operator $\tilde{H}_{\alpha, B_0}$ in \eqref{t-H} below) on $\tilde{M}$. More precisely, see \cite[(1)]{stov1}, we have
\begin{equation}\label{HH}
e^{-t H_{\alpha, B_0}}(r_1,\theta_1; r_2, \theta_2)=\sum_{j\in\Z} e^{-t \tilde{H}_{\alpha, B_0}} (r_1,\theta_1+2j\pi; r_2, \theta_2),
\end{equation}
which is similar to the construction of wave propagator on $\mathbb{T}^n$, see \cite[(3.5.12)]{sogge}. In the following subsections, we will use it to construct the
heat kernel.\vspace{0.1cm}

\begin{proposition}\label{prop:H'}
Let $K_H(t; x,y)$ be the heat kernel in \eqref{rep:h} of Proposition \ref{prop:H} .
Then
\begin{equation}\label{heatkernel:2}
\begin{split}
&e^{-t H_{\alpha, B_0}}(r_1,\theta_1; r_2, \theta_2)\\
&=\frac{B_0}{4\pi \sinh (tB_0)}e^{-\frac{B_0(r_1^2+r_2^2)}{4\tanh (tB_0)}} \\&\qquad\times \bigg( e^{\frac{B_0r_1r_2}{2\sinh (tB_0)}\cosh(tB_0+i(\theta_1-\theta_2))} e^{-\alpha tB_0} e^{-i\alpha(\theta_1-\theta_2)}\varphi(\theta_1,\theta_2)
\\&\quad-\frac{\sin(\alpha\pi)}{\pi }\int_{-\infty}^{\infty} e^{-\frac{B_0r_1r_2}{2\sinh (tB_0)}\cosh(s)} \frac{e^{s\alpha}}{e^{i(\theta_1-\theta_2)}e^{s+tB_0}+1}\, ds\bigg),
\end{split}
\end{equation}
where
\begin{equation}\label{varphi}
\begin{split}
\varphi(\theta_1,\theta_2)=\begin{cases}1,\qquad |\theta_1-\theta_2|\leq \pi \\
e^{-2\pi\alpha i},\quad  -2\pi<\theta_1-\theta_2\leq -\pi\\
e^{2\pi\alpha i},\quad  \pi<\theta_1-\theta_2\leq 2\pi.
\end{cases}
\end{split}
\end{equation}
Furthermore, we obtain the estimate
\begin{equation}\label{est:heatkernel:2}
\begin{split}
\Big| e^{-t H_{\alpha, B_0}}(x, y)\Big|\leq C\frac{B_0 e^{-\alpha tB_0}}{4\pi \sinh (tB_0)} e^{-\frac{B_0|x-y|^2}{4\tanh (tB_0)}}.
\end{split}
\end{equation}

\end{proposition}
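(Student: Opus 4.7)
The plan is to carry out the Schulman--Sunada program in three steps: first write the heat kernel on the universal cover $\tilde{M}=(0,\infty)\times\mathbb{R}$, then apply the covering sum \eqref{HH}, and finally turn the resulting sum over winding numbers into a Sommerfeld-type contour integral whose real part is explicit. The key point is that on $\tilde{M}$ the magnetic potential $A_B$ becomes a closed 1-form and can be gauge-transformed away modulo a phase $e^{-i\alpha(\theta_1-\theta_2)}$, so that $\tilde{H}_{\alpha,B_0}$ reduces (up to this phase) to a Landau-type Hamiltonian on $\tilde M$. For this operator we already know the Mehler formula \eqref{equ:Meh}; lifted to the cover it reads
\begin{equation*}
e^{-t\tilde H_{\alpha,B_0}}(r_1,\theta_1;r_2,\theta_2)=\frac{B_0}{4\pi\sinh(tB_0)}\,e^{-i\alpha(\theta_1-\theta_2)}\,e^{-\frac{B_0(r_1^2+r_2^2)}{4\tanh(tB_0)}+\frac{B_0 r_1 r_2}{2\sinh(tB_0)}\cos(\theta_1-\theta_2)-\frac{iB_0 r_1 r_2}{2}\sin(\theta_1-\theta_2)},
\end{equation*}
which one can also verify a posteriori by matching the expansion in Laguerre modes with Proposition \ref{prop:H}.

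Next I would substitute this into the Schulman--Sunada sum \eqref{HH} and rewrite it as a contour integral. Setting $\Delta\theta=\theta_1-\theta_2$ and $\rho=\tfrac{B_0 r_1 r_2}{2\sinh(tB_0)}$, the sum has the schematic form $\sum_{j\in\Z} e^{-2\pi i j \alpha}\,F(\Delta\theta+2\pi j)$ with $F(\vartheta)=\exp(\rho\cos\vartheta-i\rho\sinh(tB_0)\sin\vartheta/\cosh(\cdot))$ times the prefactor. Using the identity $\rho\cos\vartheta-i\rho(\sinh(tB_0)/\cosh(tB_0))\sin\vartheta \cdot \cosh(tB_0)=\rho\cosh(tB_0+i\vartheta)/\cosh(tB_0)\cdot\cosh(tB_0)$, one recognizes that the exponential reorganizes into $\exp(\rho\cosh(tB_0+i\vartheta))$ (this is precisely the computation that ties the Mehler kernel to the heat propagator on the cover via an analytic continuation in the angle). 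The sum over $j$ is then handled by a standard Sommerfeld residue argument: one places a contour along $\mathbb{R}+i\pi$ and $\mathbb{R}-i\pi$, picks up the residue at the pole $i\pi-i\Delta\theta$ of $1/(e^{i\Delta\theta}e^{s+tB_0}+1)$ when it lies in the strip, and collapses the remaining sum into a single integral. The residue produces the ``direct'' term carrying the topological factor $\varphi(\theta_1,\theta_2)$ defined in \eqref{varphi} (its three cases correspond to whether the geodesic in $M$ winds $0$, $-1$, or $+1$ times around the solenoid), while the contour integral becomes the second term of \eqref{heatkernel:2} with its $\sin(\alpha\pi)/\pi$ prefactor.

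For the Gaussian estimate \eqref{est:heatkernel:2} I would bound the two terms of \eqref{heatkernel:2} separately. The first term is exact: since
\begin{equation*}
\Re\bigl[\tfrac{B_0 r_1 r_2}{2\sinh(tB_0)}\cosh(tB_0+i(\theta_1-\theta_2))\bigr]=\tfrac{B_0 r_1 r_2\cos(\theta_1-\theta_2)}{2\tanh(tB_0)},
\end{equation*}
combining with the $e^{-B_0(r_1^2+r_2^2)/(4\tanh(tB_0))}$ factor yields exactly $e^{-B_0|x-y|^2/(4\tanh(tB_0))}$, and $|\varphi|\le 1$. For the integral term, I would use $\cosh s\ge 1$ together with $(r_1^2+r_2^2)-2r_1r_2\le |x-y|^2+2r_1r_2(1-1)\le \dots$; more carefully, one writes $\cosh s=1+(\cosh s-1)$ and uses $-\tfrac{B_0 r_1r_2}{2\sinh(tB_0)}\le -\tfrac{B_0 r_1r_2}{2\tanh(tB_0)}\cdot\tanh(tB_0)/\sinh(tB_0)$ to extract the Gaussian $\exp(-B_0|x-y|^2/(4\tanh tB_0))$ from the $r_1^2+r_2^2$ term and a residual $r_1 r_2$ contribution; the remaining $s$-integral $\int_{\mathbb R} e^{-c(\cosh s-1)}e^{s\alpha}/|e^{s+tB_0}+e^{-i\Delta\theta}|\,ds$ is uniformly bounded by a constant times $e^{-\alpha tB_0}$ thanks to $0<\alpha<1$ and the denominator growing like $e^{s+tB_0}$ at $+\infty$ while being bounded below by a positive constant overall (the zero of the denominator has been extracted as the residue).

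The main obstacle is the Sommerfeld step: one must justify the contour deformation, locate the unique pole of $1/(e^{i\Delta\theta}e^{s+tB_0}+1)$ contributing in each of the three ranges of $\Delta\theta$, and verify that the residue assembles exactly into the stated prefactor $e^{-\alpha tB_0}e^{-i\alpha\Delta\theta}\varphi(\theta_1,\theta_2)$. This is the place where the piecewise definition \eqref{varphi} of $\varphi$ emerges, and it is also where the argument most closely parallels (and must be adapted from) the Schr\"odinger construction in \cite{WZZ} and \cite{stov,stov1}; everything else is either a lift to the universal cover or a direct bound.
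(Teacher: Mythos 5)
Your plan to use the Schulman--Sunada covering sum is the paper's approach, but the very first step---the identification of the heat kernel on the universal cover---contains a genuine error that invalidates the rest of the argument. You claim that $e^{-t\tilde H_{\alpha,B_0}}$ on $\tilde M=(0,\infty)\times\R$ is simply the Mehler kernel \eqref{equ:Meh} lifted to $\tilde M$ and multiplied by the gauge phase $e^{-i\alpha(\theta_1-\theta_2)}$. This is not the case: the universal cover of $\R^2\setminus\{0\}$ is not $\R^2$, and the Friedrichs heat kernel on $\tilde M$ is a different object. The paper derives it by replacing the integer sum over $k\in\Z$ (indexing $e^{ik\theta}$ on $\mathbb S^1$) by an integral over $\tilde k\in\R$ (indexing $e^{i\tilde k\theta}$ on $\R$) of modified Bessel functions of real order, and then invoking the key identity of Lemma~\ref{lem: key2}. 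That identity produces two contributions: a ``direct'' term $e^{x\cosh z}H(\pi-|\Im z|)$, which carries a Heaviside cutoff restricting to $|\theta_1-\theta_2|<\pi$, and a correction integral $\frac{1}{2\pi i}\int_\R e^{-x\cosh s}\bigl(\frac{1}{z+s+i\pi}-\frac{1}{z+s-i\pi}\bigr)\,ds$. Both pieces are essential, and neither appears in your proposed cover kernel.

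The absence of the Heaviside cutoff is fatal to your subsequent Sommerfeld step. When you substitute your (incorrect) cover kernel into the Schulman--Sunada sum \eqref{HH}, you get $\sum_{j\in\Z}e^{-2\pi i j\alpha}\,F(\Delta\theta+2\pi j)$ where $F$ is the lifted Mehler kernel. But $|F(\vartheta)|$ is $2\pi$-periodic in $\vartheta$ (since $\cos(\vartheta+2\pi j)=\cos\vartheta$), so every term in the sum has the same modulus, and the series does not converge even conditionally. In the paper's version, the Heaviside factor $H(\pi-|\theta_1+2\pi j-\theta_2|)$ restricts the direct sum to at most a single value of $j$ (this is exactly where the piecewise factor $\varphi(\theta_1,\theta_2)$ comes from), and the correction integrals decay like $1/j$, so their sum converges conditionally and can be evaluated via $\sum_j e^{-2\pi i\alpha j}/(\sigma+2\pi j)=ie^{i\alpha\sigma}/(e^{i\sigma}-1)$. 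Your contour along $\R\pm i\pi$ also does not give a convergent integral as written: $\Re\bigl[\rho\cos(s\pm i\pi)\bigr]=\rho\cos s\cosh\pi$ is unbounded above, so $e^{\rho\cos\vartheta}$ blows up on that contour, whereas the paper's integrand $e^{-x\cosh s}$ decays rapidly. To repair the argument you must replace your guessed cover kernel by the correct one (integral over real Bessel orders plus Lemma~\ref{lem: key2}); once you have the Heaviside term and the correction integral separately, your residue/partial-fraction bookkeeping for the three ranges of $\Delta\theta$ does line up with the definition of $\varphi$.

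The Gaussian bound \eqref{est:heatkernel:2} is also glossed over: the inequality chain ``$(r_1^2+r_2^2)-2r_1r_2\le |x-y|^2+2r_1r_2(1-1)\le\dots$'' is not coherent, and extracting the uniform constant bound for the residual $s$-integral requires the careful decomposition (splitting at $s=0$, isolating the near-singularity of $1/(\cos\theta+\cosh s)$, and using differentiability of $e^{-\rho\cosh(\pm s - tB_0)}$) carried out in the paper; it is not automatic from ``the denominator grows at $+\infty$.'' But the primary gap is the cover kernel.
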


\begin{proof}
Recall \eqref{rep:h} and $\alpha_k=|k+\alpha|$, we have
\begin{equation*}
K_H(t; x,y)=\frac{B_0}{4\pi^2\sinh tB_0} e^{-\frac{B_0(r_1^2+r_2^2)\cosh tB_0}{4\sinh tB_0}}\sum_{k\in\Z}e^{ik(\theta_1-\theta_2)}e^{-(k+\alpha)tB_0}I_{\alpha_k}\left(\frac{B_0r_1r_2}{2\sinh tB_0}\right).
\end{equation*}
The main obstacle is to take the summation in $k$. If $\alpha=0$,  by using the formula \cite[Eq. 9. 6. 19]{AS65}
\begin{equation*}
\sum_{k\in\Z} e^{kz}I_{|k|}(x)=e^{x\cosh(z)},
\end{equation*}
we will see
\begin{equation*}
K_H(t; x,y)=\frac{B_0}{4\pi\sinh(B_0t)}\exp\left(-\frac{B_0(r_1^2+r_2^2)}{4\tanh(B_0t)}+
\frac{B_0r_1r_2}{2\sinh(B_0t)}\cosh(B_0t+i(\theta_1-\theta_2))\right),\quad \text{if}\quad \alpha=0,
\end{equation*}
which is exactly the same as the result \eqref{equ:Meh} obtained from the Mehler formula. Heuristically, if we can replace the above summation in $k$ by the integration in $k$, then one can use the
translation invariant of integration to obtain further result.  To this end, as did in \cite[Section 4]{WZZ}, we consider the operator
\begin{equation}\label{t-H}
\tilde{H}_{\alpha, B_0}=-\partial_r^2-\frac{1}{r}\partial_r+\frac{1}{r^2}\Big(-i\partial_\theta+\alpha+\frac{B_0r^2}{2}\Big)^2,
\end{equation}
which acts on $L^2(\tilde{M}, rdr\, d\theta)$. We emphasize that the variable $\theta\in\R$ while not compact manifold $\mathbb{S}^1$. Then
we choose $e^{i(\tilde{k}-\alpha)\theta}$ as an eigenfunction of  the operator $\Big(-i\partial_\theta+\alpha+\frac{B_0r^2}{2}\Big)^2$ on $L^2_\theta(\R)$, which satisfies that
\begin{equation}\label{eq:ef}
\Big(-i\partial_\theta+\alpha+\frac{B_0r^2}{2}\Big)^2 \varphi(\theta)=\Big(\tilde{k}+\frac{B_0r^2}{2}\Big)^2 \varphi(\theta).
\end{equation}
It worths to point out that $\tilde{k}\in\R$ is a real number while $k\in\Z$. More importantly, we informally move the $\alpha$ in right hand side of \eqref{eq:ef} to the $e^{i(\tilde{k}-\alpha)\theta}$, which will simplify the eigenfunctions.
Hence, similarly as \cite{WZZ} for the Schr\"odinger kernel, we obtain the heat kernel of $e^{-t \tilde{H}_{\alpha, B_0}} $
\begin{equation*}
\begin{split}
\tilde{K}_H(t; x,y)
=&\frac{B_0}{4\pi \sinh (tB_0)}e^{-\frac{B_0(r_1^2+r_2^2)}{4\tanh (tB_0)}} \\&\times \int_{\R}
 e^{i(\tilde{k}-\alpha)(\theta_1-\theta_2-itB_0)}
I_{|\tilde{k}|}\bigg(\frac{B_0r_1r_2}{2\sinh (tB_0)}\bigg)\, d\tilde{k},
\end{split}
\end{equation*}
where $x=(r_1,\theta_1)\in \tilde{M}$ and $y=(r_2,\theta_2)\in \tilde{M}$.
A key identity is from \cite[(2.11)]{stov}, we state it in the following lemma.
\begin{lemma}\label{lem: key2} For any $z\in\C$, one has
\begin{equation}\label{id:key2}
\begin{split}
\int_{\R} e^{z\tilde{k}} I_{|\tilde{k}|}(x)\,  d\tilde{k}&= e^{x\cosh(z)}H(\pi-|\Im z|)\\
&+\frac{1}{2\pi i}\int_{-\infty}^{\infty} e^{-x\cosh(s)} \left(\frac1{z+s+i\pi}-\frac1{z+s-i\pi}\right)\, ds.
\end{split}
\end{equation}
where
\begin{equation*}H(x)=
\begin{cases}
1,\quad x>0,\\
0,\quad x\leq 0.
\end{cases}
\end{equation*} is the Heaviside step function.
\end{lemma}

\begin{proof}\footnote{We would like to appreciate to Prof. P. $\check{S}\check{t}$ov\'i$\check{c}$ek for his helpful discussion. }
To prove \eqref{id:key2}, we recall the representation of the modified Bessel function of order $\nu$
\begin{equation}\label{m-bessel}
I_{\nu}(x)=\frac1{\pi}\int_0^\pi e^{x\cos s} \cos(s\nu)\, ds-\frac{\sin(\nu \pi)}{\pi}\int_0^\infty e^{-x\cosh s-\nu s}\, ds,\quad x>0.
\end{equation}
For fixed $x>0$,  one has that $$I_{\nu}(x)\sim \frac{1}{\sqrt{2\pi\nu}}\Big(\frac{ex}{2\nu}\Big)^\nu,\quad \nu\to +\infty$$
decays very rapidly in the order $\nu$. Due to this fact, the LHS of \eqref{id:key2} is absolutely convergent, hence the dominated convergent theorem implies that the LHS of \eqref{id:key2} represents an entire function in $z$ (holomorphic everywhere on $\C$). The RHS of \eqref{id:key2} is also an entire function in $z$ as well but it is less obvious. The RHS of \eqref{id:key2} is surely holomorphic in $z$ everywhere on $\C$ except the lines $\Im z=\pm \pi$.
On these lines, there is no discontinuity for the RHS of \eqref{id:key2}. For example, we consider $\Im z=\pi$. In fact,  if set
\begin{equation*}
\begin{split}
F(z)=&e^{x\cosh(z)}H(\pi-|\Im z|)\\&
+\frac{1}{2\pi i}\int_{-\infty}^{\infty} e^{-x\cosh(s)} \left(\frac1{z+s+i\pi}-\frac1{z+s-i\pi}\right)\, ds,
\end{split}
\end{equation*}
then one can prove
$$\lim_{\Im z\to \pi-} F(z)=\lim_{\Im z\to \pi+} F(z).$$
Indeed, for $a\in\R$ and $\epsilon>0$, we need to prove
$$\lim_{\epsilon\to0} F(a+i\pi-i\epsilon)=\lim_{\epsilon\to0} F(a+i\pi+i\epsilon),$$
that is,
\begin{equation*}
\begin{split}
&\lim_{\epsilon\to0} \left(e^{x\cosh(a+i\pi-i\epsilon)}+\frac{1}{2\pi i}\int_{-\infty}^{\infty} e^{-x\cosh(s)} \left(\frac1{a+i\pi-i\epsilon+s+i\pi}-\frac1{a+i\pi-i\epsilon+s-i\pi}\right)\, ds\right)\\
&=\lim_{\epsilon\to0} \frac{1}{2\pi i}\int_{-\infty}^{\infty} e^{-x\cosh(s)} \left(\frac1{a+i\pi+i\epsilon+s+i\pi}-\frac1{a+i\pi+i\epsilon+s-i\pi}\right)\, ds.
\end{split}
\end{equation*}
By direct computation, we obtain
\begin{equation*}
\begin{split}
&\lim_{\epsilon\to0} \frac{1}{2\pi i}\Big[\int_{-\infty}^{\infty} e^{-x\cosh(s)} \left(\frac1{a+i\epsilon+s+2i\pi}-\frac1{a-i\epsilon+s+2i\pi}\right)\,ds\\
&\qquad+\int_{-\infty}^{\infty} e^{-x\cosh(s)} \left(\frac1{a+s-i\epsilon}-\frac1{a+s+i\epsilon}\right)\,ds\Big]\\
&=\lim_{\epsilon\to0} \frac{1}{2\pi i}\Big[\int_{-\infty}^{\infty} e^{-x\cosh(s)} \frac{-2i\epsilon}{(a+s+2i\pi)^2+\epsilon^2}\,ds
+\int_{-\infty}^{\infty} e^{-x\cosh(s)} \frac{2i\epsilon}{(a+s)^2+\epsilon^2}\,ds\Big]\\
&=\lim_{\epsilon\to0} \frac{1}{\pi}\Big[-\int_{-\infty}^{\infty} e^{-x\cosh(s)} \frac{\epsilon}{(a+s+2i\pi)^2+\epsilon^2}\,ds
+\int_{-\infty}^{\infty} e^{-x\cosh(s)} \frac{\epsilon}{(a+s)^2+\epsilon^2}\,ds\Big]\\
&=e^{-x\cosh(a)},
\end{split}
\end{equation*}
where we use the fact that the Poisson kernel is an approximation to the identity,
implying that, for any reasonable function $m(x)$
\begin{equation*}
m(x)=\lim_{\epsilon\to0}\frac1{\pi}\int_{-\infty}^{\infty} m(y) \frac{\epsilon}{(x-y)^2+\epsilon^2}\,ds.
\end{equation*}
Obviously, since $\cosh(a+i\pi)=-\cosh a$, we have
\begin{equation*}
\begin{split}
&e^{x\cosh(a+i\pi)}=e^{-x\cosh(a)}\\
=&\lim_{\epsilon\to0} \frac{1}{2\pi i}\Big[\int_{-\infty}^{\infty} e^{-x\cosh(s)} \left(\frac1{a+i\epsilon+s+2i\pi}-\frac1{a-i\epsilon+s+2i\pi}\right)\,ds\\
&\qquad+\int_{-\infty}^{\infty} e^{-x\cosh(s)} \left(\frac1{a+s-i\epsilon}-\frac1{a+s+i\epsilon}\right)\,ds\Big].
\end{split}
\end{equation*}
Therefore the RHS of \eqref{id:key2}, $F(z)$, is an entire function in $z$ as well. As a consequence, it suffices to verify the formula \eqref{id:key2} for purely imaginary value of $z$ only.
Let $z=ib$ and recall \eqref{m-bessel}, then
\begin{equation*}
\begin{split}
\int_{\R} e^{ib\tilde{k}} I_{|\tilde{k}|}(x)\,  d\tilde{k}&= \frac1{\pi}\int_{\R} e^{ib\tilde{k}} \int_0^\pi e^{x\cos s} \cos(s|\tilde{k}|)\, ds \,  d\tilde{k}\\
&\qquad-\int_{\R} e^{ib\tilde{k}}\frac{\sin(|\tilde{k}| \pi)}{\pi}\int_0^\infty e^{-x\cosh s-|\tilde{k}| s}\, ds \,  d\tilde{k}.
\end{split}
\end{equation*}
The first term becomes that
\begin{equation*}
\begin{split}
 \frac1{\pi}\int_{\R} e^{ib\tilde{k}} \int_0^\pi e^{x\cos s} \cos(s\tilde{k})\, ds \,  &d\tilde{k}= \frac1{2\pi}\int_{\R} e^{ib\tilde{k}} \int_{\R} H(\pi-|s|)e^{x\cos s} e^{-is\tilde{k}}\, ds \,  d\tilde{k}\\
 &=e^{x\cos b} H(\pi-|b|)=e^{x\cosh(z)} H(\pi-|\Im z|).
\end{split}
\end{equation*}
The second term gives
\begin{equation*}
\begin{split}
&-\int_{\R} e^{ib\tilde{k}}\frac{\sin(|\tilde{k}| \pi)}{\pi}\int_0^\infty e^{-x\cosh s-|\tilde{k}| s}\, ds \,  d\tilde{k}\\
&=-\frac1{2\pi i}\int_0^\infty e^{-x\cosh s}\Big(\int_{0}^\infty \Big(e^{[i(b+\pi)-s]\tilde{k}}- e^{[i(b-\pi)-s]\tilde{k}}\Big)\, d\tilde{k}\\
&\qquad+\int_{-\infty}^0 \Big(e^{[i(b-\pi)+s]\tilde{k}}- e^{[i(b+\pi)+s]\tilde{k}}\Big)\, d\tilde{k}\Big) \,  ds\\
&=\frac{1}{2\pi i}\int_{-\infty}^{\infty} e^{-x\cosh(s)} \left(\frac1{ib+s+i\pi}-\frac1{ib+s-i\pi}\right)\, ds.
\end{split}
\end{equation*}
Therefore, we have proved \eqref{id:key2}.
\end{proof}

Let $z=tB_0+i(\theta_1-\theta_2)$, by using Lemma \ref{lem: key2}, we obtain
\begin{equation*}
\begin{split}
&\tilde{K}_H(t; x,y)
=\frac{B_0}{4\pi \sinh (tB_0)}e^{-\frac{B_0(r_1^2+r_2^2)}{4\tanh (tB_0)}}  e^{-\alpha tB_0}e^{-i\alpha(\theta_1-\theta_2)}\\&\qquad \qquad
\times \bigg( e^{\frac{B_0r_1r_2}{2\sinh (tB_0)}\cosh(z)}H\big(\pi-|\theta_1-\theta_2|\big)
\\&\quad+\frac{1}{2\pi i}\int_{-\infty}^{\infty} e^{-\frac{B_0r_1r_2}{2\sinh (tB_0)}\cosh(s)} \left(\frac1{z+s+i\pi}-\frac1{z+s-i\pi}\right)\, ds\bigg).
\end{split}
\end{equation*}
By using \eqref{HH} and letting $z_j=tB_0+i(\theta_1+2\pi j-\theta_2)$, we further show
\begin{equation}\label{HH'}
\begin{split}
&e^{-t H_{\alpha, B_0}}(r_1,\theta_1; r_2, \theta_2)=\sum_{j\in\Z} e^{-t \tilde{H}_{\alpha, B_0}} (r_1,\theta_1+2j\pi; r_2, \theta_2),\\
&=\frac{B_0}{4\pi \sinh (tB_0)}e^{-\frac{B_0(r_1^2+r_2^2)}{4\tanh (tB_0)}} \sum_{j\in\Z}  e^{-\alpha z_j}\\&\times \bigg( e^{\frac{B_0r_1r_2}{2\sinh (tB_0)}\cosh(z_j)}H\big(\pi-|\theta_1+2\pi j-\theta_2|\big)
\\&\quad+\frac{1}{2\pi i}\int_{-\infty}^{\infty} e^{-\frac{B_0r_1r_2}{2\sinh (tB_0)}\cosh(s)} \left(\frac1{z_j+s+i\pi}-\frac1{z_j+s-i\pi}\right)\, ds\bigg).
\end{split}
\end{equation}
In the following, we consider the two summations.
Since $\theta_1,\theta_2\in [0, 2\pi)$, then $\theta_1-\theta_2\in(-2\pi, 2\pi)$, recall \eqref{varphi},  hence we obtain
\begin{equation*}
\begin{split}
&\sum_{j\in\Z}  e^{-\alpha z_j} e^{\frac{B_0r_1r_2}{2\sinh (tB_0)}\cosh(z_j)}H\big(\pi-|\theta_1+2\pi j-\theta_2|\big)\\
&=e^{\frac{B_0r_1r_2}{2\sinh (tB_0)}\cosh(z)} e^{-\alpha tB_0}\sum_{j\in\Z}  e^{-i\alpha(\theta_1-\theta_2+2\pi j)} H\big(\pi-|\theta_1+2\pi j-\theta_2|\big)\\
&=e^{\frac{B_0r_1r_2}{2\sinh (tB_0)}\cosh(z)} e^{-\alpha tB_0} e^{-i\alpha(\theta_1-\theta_2)}\varphi(\theta_1,\theta_2),
\end{split}
\end{equation*}

For the second summation term, we use the formula
\begin{equation*}
\sum_{j\in\Z} \frac{e^{-2\pi i\alpha j}}{\sigma+2\pi j}=\frac{i e^{i\alpha\sigma}}{e^{i\sigma}-1},\quad \alpha\in(0,1),\quad \sigma\in\C\setminus 2\pi\Z,
\end{equation*}
to obtain
\begin{equation*}
\begin{split}
&\sum_{j\in\Z}  e^{-\alpha z_j}  \left(\frac1{z_j+s+i\pi}-\frac1{z_j+s-i\pi}\right)\\
&= e^{-\alpha(tB_0+i(\theta_1-\theta_2))}\sum_{j\in\Z} \big(\frac{e^{-2\pi i\alpha j}}{i(\theta_1-\theta_2+2j\pi+\pi)+s+tB_0}-\frac{e^{-2\pi i\alpha j}}{i(\theta_1-\theta_2+2j\pi-\pi)+s+tB_0}\big)\\
&=-i e^{-\alpha(tB_0+i(\theta_1-\theta_2))}\sum_{j\in\Z} \big(\frac{e^{-2\pi i\alpha j}}{\sigma_1+2j\pi}-\frac{e^{-2\pi i\alpha j}}{\sigma_2+2j\pi}\big)\\
&= e^{-\alpha(tB_0+i(\theta_1-\theta_2))}\big(\frac{e^{i\alpha \sigma_1}}{e^{i\sigma_1}-1}-\frac{e^{i\alpha \sigma_2}}{e^{i\sigma_2}-1}\big)= \frac{e^{s\alpha}}{e^{i(\theta_1-\theta_2+\pi)}e^{s+tB_0}-1}\big(e^{i\alpha\pi}-e^{-i\alpha\pi}\big)\\
&= -2i\sin(\alpha\pi)\frac{e^{s\alpha}}{e^{i(\theta_1-\theta_2)}e^{s+tB_0}+1},
\end{split}
\end{equation*}
where $\sigma_1=(\theta_1-\theta_2+\pi) -i(s+tB_0)$ and $\sigma_2=(\theta_1-\theta_2-\pi) -i(s+tB_0)$.
Therefore, by using \eqref{HH'}, we show \eqref{heatkernel:2}
\begin{equation}\label{heat:rep}
\begin{split}
&e^{-t H_{\alpha, B_0}}(r_1,\theta_1; r_2, \theta_2)\\
&=\frac{B_0}{4\pi \sinh (tB_0)}e^{-\frac{B_0(r_1^2+r_2^2)}{4\tanh (tB_0)}} \\&\qquad\times \bigg( e^{\frac{B_0r_1r_2}{2\sinh (tB_0)}\cosh(tB_0+i(\theta_1-\theta_2))} e^{-\alpha tB_0} e^{-i\alpha(\theta_1-\theta_2)}\varphi(\theta_1,\theta_2)
\\&\quad-\frac{\sin(\alpha\pi)}{\pi }\int_{-\infty}^{\infty} e^{-\frac{B_0r_1r_2}{2\sinh (tB_0)}\cosh(s)} \frac{e^{s\alpha}}{e^{i(\theta_1-\theta_2)}e^{s+tB_0}+1}\, ds\bigg).
\end{split}
\end{equation}
To prove \eqref{est:heatkernel:2},
we first note that
\begin{equation*}
\begin{split}
\cosh(tB_0+i\theta)&=\cos\theta\cosh(tB_0)+i\sin\theta\sinh(tB_0), \\|x-y|^2&=r_1^2+r_2^2-2r_1r_2\cos(\theta_1-\theta_2),
\end{split}
\end{equation*}
the first term is controlled by
\begin{equation}\label{est:h1}
\begin{split}
&\frac{B_0}{4\pi \sinh (tB_0)}e^{-\frac{B_0(r_1^2+r_2^2)}{4\tanh (tB_0)}} e^{\frac{B_0r_1r_2}{2\sinh (tB_0)}\cosh(tB_0)\cos(\theta_1-\theta_2))} e^{-\alpha tB_0} \\
&\leq C \frac{B_0 e^{-\alpha tB_0}}{4\pi \sinh (tB_0)}e^{-\frac{B_0|x-y|^2}{4\tanh (tB_0)}}.
\end{split}
\end{equation}
For the second term, we aim to prove
\begin{align}\label{est:h2}
\Big|\int_{\R} e^{-\frac{B_0r_1r_2}{2\sinh (tB_0)}\cosh s}\frac{e^{\alpha s}}{1+e^{s+i(\theta_1-\theta_2)+tB_0}}\,ds\Big|\leq C,
\end{align}
where $C$ is a constant independent of $t$, $r_1, r_2$ and $\theta_1, \theta_2$. To this end, let $\theta=\theta_1-\theta_2$,
we write
\begin{align*}
&\int_{\R} e^{-\frac{B_0r_1r_2}{2\sinh (tB_0)}\cosh s}\frac{e^{\alpha s}}{1+e^{s+i(\theta_1-\theta_2)+tB_0}}\,ds\\
&=e^{-\alpha tB_0}\int_{0}^\infty \Big( e^{-\frac{B_0r_1r_2}{2\sinh (tB_0)}\cosh (-s-tB_0)}\frac{e^{-\alpha s}}{1+e^{-s+i\theta}}+e^{-\frac{B_0r_1r_2}{2\sinh (tB_0)}\cosh (s-tB_0)}\frac{e^{\alpha s}}{1+e^{s+i\theta}}\Big)\,ds\\
&=e^{-\alpha tB_0}\Big( \int_{0}^\infty e^{-\frac{B_0r_1r_2}{2\sinh (tB_0)}\cosh (-s-tB_0)}\big(\frac{e^{-\alpha s}}{1+e^{-s+i\theta}}+\frac{e^{\alpha s}}{1+e^{s+i\theta}}\big)\,ds\\
&\quad +\int_{0}^\infty \Big(e^{-\frac{B_0r_1r_2}{2\sinh (tB_0)}\cosh (s-tB_0)}-e^{-\frac{B_0r_1r_2}{2\sinh (tB_0)}\cosh (-s-tB_0)}\Big)\frac{e^{\alpha s}}{1+e^{s+i\theta}} \,ds\Big),
\end{align*}
then we just need to verify that
\begin{align}\label{est: term1}
\int_{0}^\infty\Big|\frac{e^{-\alpha s}}{1+e^{-s+i\theta}}+\frac{e^{\alpha s}}{1+e^{s+i\theta}}\Big| \,ds\lesssim 1,
\end{align}
and
\begin{align}\label{est: term2}
\Big|\int_{0}^\infty \Big(e^{-\frac{B_0r_1r_2}{2\sinh (tB_0)}\cosh (s-tB_0)}-e^{-\frac{B_0r_1r_2}{2\sinh (tB_0)}\cosh (-s-tB_0)}\Big)\frac{e^{\alpha s}}{1+e^{s+i\theta}} \,ds\Big|\lesssim1,
\end{align}
where the implicit constant is independent of $\theta$.
We first prove \eqref{est: term1}. In fact,
\begin{align*}
&\frac{e^{-\alpha s}}{1+e^{-s+i\theta}}+\frac{e^{\alpha s}}{1+e^{s+i\theta}}\\
&=\frac{\cosh(\alpha s)e^{-i\theta}+\cosh((1-\alpha)s)}{\cos\theta+\cosh s}\\
&=\frac{\cosh(\alpha s)\cos\theta+\cosh((1-\alpha)s)-i\sin\theta\cosh(\alpha s)}{2(\cos^2(\frac{\theta}{2})+\sinh^2(\frac s 2))}\\
&=\frac{2\cos^2(\frac{\theta}2)\cosh(\alpha s)+(\cosh((1-\alpha)s)-\cosh(\alpha s))-2i\sin(\frac\theta 2)\cos(\frac\theta 2)\cosh(\alpha s)}{2(\cos^2(\frac{\theta}{2})+\sinh^2(\frac s 2))}.
\end{align*}
Since $\cosh x-1\sim\frac{x^2}2,\sinh x\sim x$,  as $x\to 0;$ $\cosh x\sim e^x,\sinh x\sim e^{x}$, as $x\to \infty,$
 we have
\begin{align*}
\int_{0}^\infty\Big|\frac{\cos^2(\frac{\theta}2)\cosh(\alpha s)}{\cos^2(\frac{\theta}{2})+\sinh^2(\frac s 2)}\Big| \,ds\lesssim\int_0^1\frac{2|\cos(\frac\theta 2)|}{s^2+(2|\cos(\frac\theta 2)|)^2}ds+\int_1^\infty\ e^{(\alpha-1)s}ds\lesssim 1.
\end{align*}
Similarly,  we obtain
\begin{align*}
\int_{0}^\infty\Big|\frac{\sin(\frac\theta 2)\cos(\frac\theta 2)\cosh(\alpha s)}{\cos^2(\frac{\theta}{2})+\sinh^2(\frac s 2)}\Big| \,ds\lesssim 1.
\end{align*}
Finally, we verify that
\begin{align*}
&\int_{0}^\infty\Big|\frac{\cosh((1-\alpha)s)-\cosh(\alpha s)}{\cos^2(\frac{\theta}{2})+\sinh^2(\frac s 2)}\Big| \,ds\\
&\lesssim\int_0^1\frac{|\frac{(1-\alpha)^2}2-\frac{\alpha^2}2|s^2}{s^2}ds+\int_1^\infty \big( e^{-\alpha s}+e^{(\alpha-1)s} \big)ds\lesssim 1.
\end{align*}
We next prove \eqref{est: term2}.
For convenience, we denote
\begin{equation*}
f(s)=e^{-\frac{B_0r_1r_2}{2\sinh(tB_0)}.
\cosh(s-tB_0)}
\end{equation*}
Then, by noting that $|f(\pm s)|\leq1,$ we obtain
\begin{align*}
\Big|\int_{1}^\infty \big(f(s)-f(-s)\big)\frac{e^{\alpha s}}{1+e^{s+i\theta}} \,ds\Big|
&\leq\int_1^\infty \Big|\frac{e^{\alpha s}}{1+e^{s+i\theta}}\Big|ds\\
&=\int_1^\infty \Big|\frac{e^{(\alpha-1) s}}{e^{-s}+e^{i\theta}}\Big|ds\\
&\leq\frac{1}{1-e^{-1}}\int_1^\infty e^{(\alpha-1)s}ds\\
&\lesssim 1,
\end{align*}
hence, for \eqref{est: term2}, it suffices to prove
\begin{align*}
\int_0^1|f(s)-f(-s)|\Big|\frac{e^{\alpha s}}{1+e^{s+i\theta}}\Big|\,ds \lesssim1.
\end{align*}
Since
\begin{align*}
f'(\pm s)&=\mp\frac{B_0r_1r_2}{2\sinh(tB_0)}\sinh(\pm s-tB_0)f(\pm s)\\
&=\mp\frac{B_0r_1r_2}{2}\Big(\frac{\pm\sinh s\cosh(tB_0)}{\sinh(tB_0)}-\cosh s\Big)e^{-\frac{B_0r_1r_2}{2\sinh(tB_0)}\cosh(\pm s-tB_0)},
\end{align*}
then for $0<s<1,$ there exists a constant $C$ such that $|f'(s)|\leq C$. Hence, by differential mean value theorem, $|f(s)-f(-s)|\leq Cs$ for $0<s<1$, thus we have
\begin{align*}
\int_0^1|f(s)-f(-s)|\Big|\frac{e^{\alpha s}}{1+e^{s+i\theta}}\Big|\,ds \leq C\int_0^1 \frac{se^{\alpha s}}{e^{s}-1}ds\lesssim1.
\end{align*}
Therefore,  we prove \eqref{est:h2}.
By collecting \eqref{heat:rep}, \eqref{est:h1} and \eqref{est:h2}, we finally obtain
\begin{equation*}
\begin{split}
&\Big| e^{-t H_{\alpha, B_0}}(r_1,\theta_1; r_2, \theta_2)\Big|\\
&\leq C\frac{B_0 e^{-\alpha tB_0}}{4\pi \sinh (tB_0)} \left(e^{-\frac{B_0|x-y|^2}{4\tanh (tB_0)}}+e^{-\frac{B_0(r_1^2+r_2^2)}{4\tanh (tB_0)}} \right).
\end{split}
\end{equation*}
which implies \eqref{est:heatkernel:2}.

\end{proof}

\section{Bernstein inequalities and square function inequalities }\label{sec:BS}
In this section, we prove the Bernstein inequalities and the square function inequality associated with the Schr\"odinger operator $H_{\alpha,B_0}$
by using the heat kernel estimates showed in the previous section.

\begin{proposition}[Bernstein inequalities]
Let $\varphi(\lambda)$ be a $C^\infty_c$ bump function on $\R$  with support in $[\frac{1}{2},2]$, then it holds for any $f\in L^q(\mathbb{R}^2)$ and $j\in\mathbb{Z}$
\begin{equation}\label{est:Bern}
\|\varphi(2^{-j}\sqrt{H_{\alpha,B_0}})f\|_{L^p(\mathbb{R}^2)}\lesssim2^{2j\big(\frac{1}{q}-\frac{1}{p}\big)}\|f\|_{L^q(\mathbb{R}^2)},\quad 1\leq q\leq p\leq\infty.
\end{equation}
\end{proposition}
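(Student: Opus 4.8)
The plan is to deduce \eqref{est:Bern} from the Gaussian heat kernel bound of Proposition~\ref{prop:Gassian-H}, by first establishing an off-diagonal kernel estimate for the Littlewood--Paley projector $\varphi(2^{-j}\sqrt{H_{\alpha,B_0}})$ and then invoking Young's inequality. First I would write $\varphi(2^{-j}\sqrt{H_{\alpha,B_0}})=\Phi(2^{-2j}H_{\alpha,B_0})$, where $\Phi(s):=\varphi(\sqrt s)$ is smooth and supported in $[1/4,4]$, so $\Phi\in C_c^\infty(\R)$; setting $G(s):=\Phi(s)e^{s}\in C_c^\infty(\R)$, Fourier inversion gives $\Phi(s)=\frac{1}{2\pi}\int_{\R}\widehat G(\tau)\,e^{-(1-i\tau)s}\,d\tau$. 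Since $\|e^{-zH_{\alpha,B_0}}\|_{L^2\to L^2}\le 1$ for $\Re z\ge 0$ and $\widehat G\in\mathcal{S}(\R)$, one may substitute $s=2^{-2j}H_{\alpha,B_0}$ and obtain, with absolute convergence in operator norm,
\[
\varphi(2^{-j}\sqrt{H_{\alpha,B_0}})=\frac{1}{2\pi}\int_{\R}\widehat G(\tau)\,e^{-2^{-2j}(1-i\tau)H_{\alpha,B_0}}\,d\tau .
\]
Because each $e^{-zH_{\alpha,B_0}}$ with $z=2^{-2j}(1-i\tau)\in\C_+$ has an integral kernel bounded uniformly by $C\,2^{2j}$ (Proposition~\ref{prop:Gassian-H}), the projector is again an integral operator, with kernel $\frac{1}{2\pi}\int_{\R}\widehat G(\tau)\,K_H\big(2^{-2j}(1-i\tau);x,y\big)\,d\tau$.

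The main step is to turn this representation into the pointwise bound
\[
\big|\varphi(2^{-j}\sqrt{H_{\alpha,B_0}})(x,y)\big|\le C_N\,2^{2j}\big(1+2^{j}|x-y|\big)^{-N},\qquad N\ge 0 .
\]
For $z=2^{-2j}(1-i\tau)$ one has $\Re z=2^{-2j}$ and $\Re(1/z)=2^{2j}/(1+\tau^2)$, so Proposition~\ref{prop:Gassian-H} yields $|K_H(z;x,y)|\le C\,2^{2j}\exp\!\big(-\tfrac{2^{2j}|x-y|^2}{C(1+\tau^2)}\big)$. Feeding in the rapid decay $|\widehat G(\tau)|\lesssim_N(1+|\tau|)^{-N}$ and setting $R=2^{j}|x-y|$, the claim reduces to the elementary inequality $\int_{\R}(1+|\tau|)^{-N}e^{-R^2/(C(1+\tau^2))}\,d\tau\lesssim_M(1+R)^{-M}$, which I would prove by splitting the integral into $|\tau|\le 1$ (where the exponential contributes the Gaussian factor $e^{-R^2/(2C)}$), $1\le|\tau|\le R$ (where the substitution $\tau=R\sigma$ together with the convergence of $\int_0^1\sigma^{-N}e^{-1/(2C\sigma^2)}\,d\sigma$ gives a gain $R^{1-N}$), and $|\tau|>R$ (the tail of $(1+|\tau|)^{-N}$), upon taking $N=M+2$.

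With this kernel estimate in hand, $\big|\varphi(2^{-j}\sqrt{H_{\alpha,B_0}})f(x)\big|\le(\Phi_j*|f|)(x)$ with $\Phi_j(z)=C_N 2^{2j}(1+2^{j}|z|)^{-N}$. Fixing $N>2$ large and letting $r\in[1,\infty]$ be determined by $1+\tfrac1p=\tfrac1r+\tfrac1q$ (possible precisely because $q\le p$, and then $1-\tfrac1r=\tfrac1q-\tfrac1p$), a change of variables gives $\|\Phi_j\|_{L^r(\R^2)}\simeq 2^{2j(1-1/r)}=2^{2j(1/q-1/p)}$, and Young's inequality $\|\Phi_j*|f|\|_{L^p}\le\|\Phi_j\|_{L^r}\|f\|_{L^q}$ yields \eqref{est:Bern}.

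The only non-routine ingredient is the off-diagonal kernel estimate, and there the essential input is the \emph{complex-time} bound of Proposition~\ref{prop:Gassian-H}: the real-time estimate \eqref{est:G-H} alone cannot produce localization at a single dyadic frequency scale, since $\varphi(\sqrt{\,\cdot\,})$ is not a superposition of real exponentials $e^{-u\,\cdot}$. Everything else is the standard heat-kernel route to Bernstein inequalities adapted to the operator $H_{\alpha,B_0}$.
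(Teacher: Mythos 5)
Your proposal is correct and follows essentially the same route as the paper: writing $\varphi(\sqrt{\cdot})$ as a Fourier superposition of complex-time heat semigroups $e^{-(1-i\tau)s}$, invoking the complex-time Gaussian bound of Proposition~\ref{prop:Gassian-H} to get the pointwise kernel bound $2^{2j}(1+2^j|x-y|)^{-N}$, and concluding by Young's inequality. The only (minor) differences are cosmetic: the paper uses $\psi(x)e^{2x}$ and keeps an extra real-time heat factor whose kernel it composes with the complex-time one, while you use $\Phi(s)e^{s}$ and, in fact, track the $(1+\tau^{2})$-dependence of the Gaussian constant more carefully than the paper's displayed estimate does.
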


\begin{proof}
Let  $\psi(x)=\varphi(\sqrt{x})$ and $\psi_e(x):=\psi(x)e^{2x}$. Then $\psi_e$  is  a $C^\infty_c$-function on $\R$ with support in $[\frac{1}{4},4]$ and
then its Fourier
transform $\hat{\psi}_e$ belongs to Schwartz class. We write
\begin{align*}
\varphi(\sqrt{x})&=\psi(x)= e^{-2x}\psi_{e}(x)\\
&=e^{-2x}\int_{\R} e^{i x \cdot\xi} \hat{\psi}_e(\xi)\, d\xi\\
 &=e^{-x}\int_{\R} e^{-x(1-i\xi)} \hat{\psi}_e(\xi)\, d\xi.
\end{align*}
Therefore, by the functional calculus, we obtain
\begin{align*}
 &\varphi(\sqrt{H_{\alpha,B_0}})=\psi(H_{\alpha,B_0})=e^{-H_{\alpha,B_0}}\int_{\R} e^{-(1-i\xi)H_{\alpha,B_0}} \hat{\psi}_e(\xi)\, d\xi,
\end{align*}
furthermore,
\begin{align*}
 &\varphi(2^{-j}\sqrt{H_{\alpha,B_0}})=\psi(2^{-2j}H_{\alpha,B_0})=e^{-2^{-2j}H_{\alpha,B_0}}\int_{\R} e^{-(1-i\xi)2^{-2j}H_{\alpha,B_0}} \hat{\psi}_e(\xi)\, d\xi.
\end{align*}
By using \eqref{est:G-H} in Proposition \ref{prop:Gassian-H} with $t=2^{-2j}$, we have
\begin{align*}
\Big|\varphi(2^{-j}\sqrt{H_{\alpha,B_0}})(x,y)\Big|&\lesssim 2^{4j}\int_{\R^2}e^{-\frac{2^{2j}|x-z|^2}{C}}e^{-\frac{2^{2j}|y-z|^2}{C}}\, dz \int_{\R} \hat{\psi}_e(\xi)\, d\xi\\
& \lesssim 2^{2j}\int_{\R^2}e^{-\frac{|2^jx-z|^2}{C}}e^{-\frac{|2^jy-z|^2}{C}}\, dz\\
& \lesssim 2^{2j}(1+2^j|x-y|)^{-N},\quad \forall N\geq 1.
\end{align*}
where we use the fact that $|\alpha-z|^2+|\beta-z|^2\geq \frac12|\alpha-\beta|^2$ with $\alpha,\beta\in\R^2$ and
\begin{align*}
\int_{\R^2}e^{-\frac{|\alpha-z|^2}{C}}e^{-\frac{|\beta-z|^2}{C}}\, dz &\lesssim e^{-\frac{|\alpha-\beta|^2}{4C}}\int_{\R^2}e^{-\frac{|\alpha-z|^2}{2C}}e^{-\frac{|\beta-z|^2}{2C}}\, dz\\
&\lesssim e^{-\frac{|\alpha-\beta|^2}{4C}}\leq (1+|\alpha-\beta|)^{-N}, \forall N\geq 1.
\end{align*}
By Young's inequality,  it follows \eqref{est:Bern}.
\end{proof}

\begin{proposition}[The square function inequality]\label{prop:squarefun} Let $\{\varphi_j\}_{j\in\mathbb Z}$ be a Littlewood-Paley sequence given by\eqref{LP-dp}.
Then for $1<p<\infty$,
there exist constants $c_p$ and $C_p$ depending on $p$ such that
\begin{equation}\label{square}
c_p\|f\|_{L^p(\R^2)}\leq
\Big\|\Big(\sum_{j\in\Z}|\varphi_j(\sqrt{H_{{\alpha}, B_0}})f|^2\Big)^{\frac12}\Big\|_{L^p(\R^2)}\leq
C_p\|f\|_{L^p(\R^2)}.
\end{equation}

\end{proposition}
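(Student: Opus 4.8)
The plan is to derive \eqref{square} entirely from the Gaussian upper bound \eqref{est:G-H} of Proposition~\ref{prop:Gassian-H}, using the fact that $(\R^2,|\cdot|,dx)$ is a doubling metric measure space of homogeneous dimension $n=2$. I emphasize at the outset that Stein's classical Littlewood--Paley theory for symmetric diffusion semigroups is \emph{not} available here, because $e^{-tH_{\alpha,B_0}}$ is not positivity preserving (see the remark after Proposition~\ref{prop:Gassian-H}); the Gaussian kernel bound \eqref{est:G-H} is precisely what plays the role of positivity, since it lets one run a Calder\'on--Zygmund argument that only sees the size and regularity of kernels. Only the plain bound \eqref{est:G-H} is needed; the sharper Mehler-type bound \eqref{est:heatkernel:2} is not required for this proposition.

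The first step is to record a spectral multiplier theorem for $H_{\alpha,B_0}$ of Mihlin--H\"ormander type: if $m:\R\to\C$ is bounded and
\[
\|m\|_{(s)}:=\sup_{R>0}\big\|\,\varphi(\cdot)\,m(R\,\cdot)\,\big\|_{C^s(\R)}<\infty
\]
for some $s>n/2=1$, then $m(\sqrt{H_{\alpha,B_0}})$ is bounded on $L^p(\R^2)$ for every $1<p<\infty$ and of weak type $(1,1)$, with norm $\lesssim\|m\|_{(s)}+\|m\|_{L^\infty}$. This is a black box: it is the standard consequence of the general theory of spectral multipliers for a nonnegative self-adjoint operator whose heat kernel satisfies Gaussian upper bounds on a doubling space (Duong--Ouhabaz--Sikora, Blunck--Kunstmann, Alexopoulos, Hebisch, etc.), once one feeds in \eqref{est:G-H}. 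The only point needing internal input is the regularity of the kernel of $m(\sqrt{H_{\alpha,B_0}})$ entering H\"ormander's integral condition, and this is obtained by combining \eqref{est:G-H} with finite speed of propagation for $\cos(t\sqrt{H_{\alpha,B_0}})$ --- which is equivalent to the Davies--Gaffney inequality \eqref{est:DG} already proved. I expect this multiplier theorem to be the genuinely technical heart; everything else below is soft.

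Granting the multiplier theorem, the upper bound in \eqref{square} follows by randomization. By Khinchine's inequality it is equivalent to
\[
\Big\|\sum_{j\in\Z}\epsilon_j\,\varphi_j(\sqrt{H_{\alpha,B_0}})f\Big\|_{L^p(\R^2)}\le C_p\|f\|_{L^p(\R^2)},\qquad \forall\,(\epsilon_j)\in\{-1,1\}^{\Z},
\]
with $C_p$ independent of the signs. I apply the multiplier theorem to $m_\epsilon(\lambda)=\sum_{j\in\Z}\epsilon_j\,\varphi(2^{-j}\lambda)$: at each fixed $\lambda$ at most two terms are non-zero, so $\|m_\epsilon\|_{L^\infty}\lesssim1$; and since each $\varphi(2^{-j}\cdot)$ is a bump adapted to the annulus $\{|\lambda|\sim2^j\}$ and these annuli are dyadically separated, a direct computation gives $\|m_\epsilon\|_{(s)}\lesssim_s 1$ for every $s$, uniformly in $\epsilon$. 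Hence $\|m_\epsilon(\sqrt{H_{\alpha,B_0}})\|_{L^p\to L^p}\le C_p$ uniformly, which is the claim.

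Finally, the lower bound in \eqref{square} follows by duality together with the same argument applied to a fattened bump. Pick $\tilde\varphi\in C_c^\infty(\R\setminus\{0\})$ with $\tilde\varphi\equiv1$ on $\operatorname{supp}\varphi$ and $\operatorname{supp}\tilde\varphi\subset(1/4,2)$, and set $\tilde\varphi_j=\tilde\varphi(2^{-j}\cdot)$; then $\sum_j\varphi_j\tilde\varphi_j=\sum_j\varphi_j\equiv1$ on $(0,\infty)\supset\operatorname{spec}(\sqrt{H_{\alpha,B_0}})$, so $f=\sum_j\tilde\varphi_j(\sqrt{H_{\alpha,B_0}})\varphi_j(\sqrt{H_{\alpha,B_0}})f$ for $f\in L^2$, extending to $L^p$ by the bounds just obtained. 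For $g\in L^{p'}(\R^2)$, Cauchy--Schwarz in $j$ followed by H\"older gives
\[
|\langle f,g\rangle|\le\Big\|\big(\textstyle\sum_j|\varphi_j(\sqrt{H_{\alpha,B_0}})f|^2\big)^{1/2}\Big\|_{L^p}\Big\|\big(\textstyle\sum_j|\tilde\varphi_j(\sqrt{H_{\alpha,B_0}})g|^2\big)^{1/2}\Big\|_{L^{p'}}.
\]
The upper square function bound, which holds verbatim for $\tilde\varphi$, controls the last factor by $C_{p'}\|g\|_{L^{p'}}$; taking the supremum over $\|g\|_{L^{p'}}=1$ yields $\|f\|_{L^p}\le C_{p'}\big\|(\sum_j|\varphi_j(\sqrt{H_{\alpha,B_0}})f|^2)^{1/2}\big\|_{L^p}$, i.e. the left-hand inequality with $c_p=C_{p'}^{-1}$. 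This completes the plan.
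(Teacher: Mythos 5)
Your proposal is correct and follows essentially the same route as the paper: the paper also deduces \eqref{square} from the Gaussian bound \eqref{est:G-H}, citing Stein's Rademacher-function (Khinchine randomization) argument and Alexopoulos' spectral multiplier theory for operators with Gaussian heat kernel bounds, which is exactly the multiplier-plus-randomization-plus-duality scheme you spell out. Your write-up merely makes explicit the steps the paper delegates to those references.
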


\begin{proof}
By using \eqref{est:G-H} in Proposition \ref{prop:Gassian-H}, Proposition \ref{prop:squarefun} follows from
the Rademacher functions argument in \cite{Stein}. We also refer the reader to \cite{Alex} for result that the square function inequality \eqref{square} can be derived from
the heat kernel with Gaussian upper bounds.
\end{proof}

\section{The decay estimates }\label{sec:decay}

In this section, we mainly prove the decay estimate \eqref{dis-w}.
The first key ingredient is the following Proposition about the subordination formula from \cite{MS, DPR}.
\begin{proposition}
If $\varphi(\lambda)\in C_c^\infty(\mathbb{R})$ is supported in $[\frac{1}{2},2]$, then, for all $j\in\Z, t, x>0$ with $2^jt\geq 1$,  we can write
\begin{equation}\label{key}
\begin{split}
&\varphi(2^{-j}\sqrt{x})e^{it\sqrt{x}}\\&=\rho\big(\frac{tx}{2^j}, 2^jt\big)
+\varphi(2^{-j}\sqrt{x})\big(2^jt\big)^{\frac12}\int_0^\infty \chi(s,2^jt)e^{\frac{i2^jt}{4s}}e^{i2^{-j}tsx}\,ds,
\end{split}
\end{equation}
where $\rho(s,\tau)\in\mathcal{S}(\mathbb{R}\times\mathbb{R})$ is a Schwartz function and
and  $\chi\in C^\infty(\mathbb{R}\times\mathbb{R})$ with $\text{supp}\,\chi(\cdot,\tau)\subseteq[\frac{1}{16},4]$ such that
\begin{equation}\label{est:chi}
\sup_{\tau\in\R}\big|\partial_s^\alpha\partial_\tau^\beta \chi(s,\tau)\big|\lesssim_{\alpha,\beta}(1+|s|)^{-\alpha},\quad \forall \alpha,\beta\geq0.
\end{equation}
\end{proposition}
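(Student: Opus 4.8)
The plan is to turn \eqref{key} into a one-variable identity by a change of variables, to manufacture $\chi$ by a stationary–phase parametrix construction, and to read off $\rho$ as the leftover error. We regard \eqref{key} as an identity between functions of the spectral parameter $x>0$ (it is then applied to $H_{\alpha,B_0}$ by the functional calculus). On the support of $\varphi(2^{-j}\sqrt{\,\cdot\,})$ put $y=2^{-j}\sqrt x\in[\tfrac12,2]$ and $\tau=2^jt\ge1$; then $t\sqrt x=\tau y$, $tx/2^j=\tau y^2$, $2^{-j}tsx=\tau sy^2$ and $2^jt=\tau$, while off that support $\varphi(2^{-j}\sqrt x)=0$ and one only needs $\rho(\tau y^2,\tau)=0$, which will hold by construction. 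Thus \eqref{key} is equivalent to
\[
\varphi(y)\,e^{i\tau y}=\rho(\tau y^2,\tau)+\varphi(y)\,\tau^{1/2}\!\int_0^\infty\chi(s,\tau)\,e^{i\tau\phi_y(s)}\,ds ,\qquad \phi_y(s)=\tfrac1{4s}+sy^2,\ \ \tau\ge1,\ y>0 .
\]
The algebraic identity $\phi_y(s)=y+\dfrac{(2sy-1)^2}{4s}$ explains why this should hold: $\phi_y$ has a single, nondegenerate critical point at $s_c(y)=\tfrac1{2y}$ with $\phi_y(s_c)=y$ and $\phi_y''(s_c)=4y^3>0$, and for $y\in[\tfrac12,2]$ the point $s_c(y)$ lies in $[\tfrac14,1]\subset[\tfrac1{16},4]$, so the oscillatory integral is, morally, $e^{i\tau y}$ times a stationary–phase factor.

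Next I would fix $\chi_0\in C_c^\infty$ with $\chi_0\equiv1$ on a neighbourhood of $[\tfrac14,1]$ and $\mathrm{supp}\,\chi_0\subset[\tfrac1{16},4]$, and build $\chi$ with the asymptotic expansion $\chi(s,\tau)\sim\chi_0(s)\sum_{\ell\ge0}b_\ell(s)\,\tau^{-\ell}$, realised via Borel's lemma as a genuine $C^\infty$ symbol, compactly supported in $s$, with fixed $b_\ell\in C^\infty$ supported in $\mathrm{supp}\,\chi_0$. The non-stationary part of $\mathrm{supp}\,\chi_0$ contributes $O(\tau^{-\infty})$, and the standard stationary–phase expansion at $s_c(y)$ gives $\tau^{1/2}\!\int\chi_0 b_\ell e^{i\tau\phi_y}\,ds=e^{i\tau y}\sum_{m\ge0}c_{\ell,m}(y)\tau^{-m}$ with smooth coefficients and leading term $c_{\ell,0}(y)=b_\ell(s_c(y))\sqrt{\pi/(2y^3)}\,e^{i\pi/4}$, where $c_{\ell,m}$ depends only on $b_\ell$ and $\phi_y$. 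Since the factor $\sqrt{\pi/(2y^3)}$ never vanishes, the equations obtained by matching powers of $\tau$ are triangular and can be solved recursively for the $b_\ell$ (the usual transport-equation scheme), so that
\[
\tau^{1/2}\!\int_0^\infty\chi(s,\tau)\,e^{i\tau\phi_y(s)}\,ds=e^{i\tau y}+E(y,\tau),\qquad \sup_{y\in[\frac12,2]}\big|\partial_y^k\partial_\tau^m E(y,\tau)\big|\lesssim_{k,m,N}\tau^{-N}\quad(\forall k,m,N),
\]
the first choice being $b_0(s)=\tfrac1{2\sqrt\pi}s^{-3/2}e^{-i\pi/4}$ — exactly the density appearing in the classical subordination formula. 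Because each $b_\ell$ is a fixed compactly supported $C^\infty$ function and $\tau^{-\ell-\beta}\le1$ for $\tau\ge1$, the resulting $\chi$ satisfies $\mathrm{supp}\,\chi(\cdot,\tau)\subset[\tfrac1{16},4]$ and $\sup_{\tau\ge1}|\partial_s^\alpha\partial_\tau^\beta\chi(s,\tau)|\lesssim_{\alpha,\beta}1\lesssim(1+|s|)^{-\alpha}$ on that support, which is \eqref{est:chi}.

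Then I would \emph{define} $\rho$ through $\rho(\tau y^2,\tau):=-\varphi(y)\,E(y,\tau)$: since for fixed $\tau>0$ the map $y\mapsto u=\tau y^2$ is a diffeomorphism of $(0,\infty)$, this means $\rho(u,\tau)=-\varphi(\sqrt{u/\tau})\,E(\sqrt{u/\tau},\tau)$, which vanishes unless $u/\tau$ lies in the (compact) range of $y^2$ over $\mathrm{supp}\,\varphi\subset[\tfrac12,2]$; with this $\rho$ the displayed identity — hence \eqref{key} — holds by construction, including where $\varphi(y)=0$. To finish one checks $\rho\in\mathcal S(\mathbb R\times\mathbb R)$: on the cone $\{\tau\ge1,\ \tau/4\le u\le4\tau\}$ the argument $\sqrt{u/\tau}$ stays in a fixed compact subset of $(0,\infty)$, so $\rho$ is smooth there, and by the bound on $E$ and the chain rule it decays, together with all of its derivatives, faster than any power of $\tau$; since $u\asymp\tau$ on that cone this is also faster than any power of $u$. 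Multiplying $\rho$ by a cutoff $\zeta(\tau)\in C^\infty$ with $\zeta\equiv1$ on $[1,\infty)$ and $\mathrm{supp}\,\zeta\subset[\tfrac12,\infty)$ turns it into an honest Schwartz function on $\mathbb R^2$ without changing the identity on $\{\tau\ge1\}$; undoing the reduction of the first paragraph yields \eqref{key}.

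I expect the main obstacle to be the parametrix bookkeeping in the second paragraph: producing a single, $\tau$-uniform symbol $\chi$ whose oscillatory integral reproduces $e^{i\tau y}$ to infinite order \emph{uniformly} over the whole window $y\in[\tfrac12,2]$ (equivalently, solving the transport equations with the $s^{-3/2}$ leading term), and then the somewhat fussy check that the error assembles into a function that is Schwartz in \emph{both} variables — the rapid decay in the first slot is essentially free because $u\asymp\tau$ on the relevant cone, but the smooth cut-off to the cone has to be done by hand. A more computational alternative would start from $e^{-\beta\sqrt\lambda}=\frac{\beta}{2\sqrt\pi}\int_0^\infty e^{-s\lambda}e^{-\beta^2/(4s)}s^{-3/2}\,ds$, rotate the $s$–contour and continue $\beta\to-it$ to reach the phase $\frac1{4s}+s\lambda$ with amplitude $\propto s^{-3/2}$, and localise near the saddle $s=s_c$; this produces the same $\chi$ and the same error, so the analytic content is again stationary phase.
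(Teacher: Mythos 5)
Your argument is correct in outline, but it takes a genuinely different route from the paper. The paper does not build $\chi$ by a parametrix: it first proves the subordination formula $e^{-y\sqrt{x}}=\frac{y}{2\sqrt{\pi}}\int_0^\infty e^{-sx-\frac{y^2}{4s}}s^{-3/2}\,ds$, analytically continues $y=\epsilon-it$, $\epsilon\to0^+$, and rescales, so that $\varphi(2^{-j}\sqrt{x})e^{it\sqrt{x}}$ is written \emph{exactly} as a constant multiple of $(2^jt)^{1/2}\varphi(2^{-j}\sqrt{x})\,I\big(\tfrac{tx}{2^j},2^jt\big)$ with $I(a,\tau)=\int_0^\infty e^{ira}e^{\frac{i\tau}{4r}}r^{-3/2}\,dr$. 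A dyadic decomposition in $r$ then does all the work: the middle blocks give the explicit, $\tau$-independent amplitude $\tilde{\chi}(r)=\sum_{j=-4}^{1}\beta_j(r)r^{-3/2}$ (your $b_0$, with no higher-order corrections ever needed), while the low and high blocks are handled purely by non-stationary phase (repeated integration by parts, using $|\phi'_{a,\tau}(r)|\gtrsim a+\tau$ for $r\geq 2$ and $|\phi'_{a,\tau}(r)|\gtrsim (a+\tau)/r^2$ for $r\leq \tfrac1{16}$, valid because $\tfrac14\leq a/\tau\leq 4$ on the support of $\varphi$) and absorbed into $\rho$; a cutoff $\phi(2^jt)$ then yields \eqref{est:chi} and the Schwartz property. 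In other words, the exact subordination identity supplies the correct amplitude for free, and only elementary $(a+t)^{-N}$ estimates are required. Your route instead reverse-engineers the amplitude: uniform stationary phase in the parameter $y\in[\tfrac12,2]$, a triangular transport scheme determining $b_\ell$ on $s_c([\tfrac12,2])=[\tfrac14,1]$, Borel summation, and the verification that the error decays rapidly together with \emph{all} $(y,\tau)$-derivatives before being repackaged as $\rho$. That does work -- the nonvanishing of $\sqrt{\pi/(2y^3)}$ makes the recursion solvable, your $b_0=\tfrac1{2\sqrt{\pi}}s^{-3/2}e^{-i\pi/4}$ is exactly the subordination density, and $u\asymp\tau$ on the support gives Schwartz decay in the first slot -- but all of the delicate bookkeeping (term-by-term differentiability of the expansion in the large parameter, smooth extension of each $b_\ell$ beyond $[\tfrac14,1]$, the $\tau$-cutoff for $\chi$ as well as for $\rho$) sits precisely in the steps that the paper's explicit formula lets one skip. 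The ``more computational alternative'' you sketch in your final sentence (subordination formula plus localization near the saddle) is in fact the paper's proof.
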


If this has been done, then by the spectral theory for the non-negative self-adjoint operator $H_{\alpha,B_0}$, we can have the representation of the micolocalized
half-wave propagator
\begin{equation}\label{key-operator}
\begin{split}
&\varphi(2^{-j}\sqrt{H_{\alpha,B_0}})e^{it\sqrt{H_{\alpha,B_0}}}\\&=\rho\big(\frac{tH_{\alpha,B_0}}{2^j}, 2^jt\big)
+\varphi(2^{-j}\sqrt{H_{\alpha,B_0}})\big(2^jt\big)^{\frac12}\int_0^\infty \chi(s,2^jt)e^{\frac{i2^jt}{4s}}e^{i2^{-j}tsH_{\alpha,B_0}}\,ds.
\end{split}
\end{equation}

\begin{proof} This result is original from \cite{MS} and the authors of \cite{DPR} provided an independent proof of the key formula.
For the self-contained and the convenience of the reader, we follow the idea of \cite{DPR} to provide the details of the proof.\vspace{0.1cm}

The starting point of the proof is the subordination formula
\begin{equation}\label{change:subord}
e^{-y\sqrt{x}}=\frac{y}{2\sqrt{\pi}}\int_0^\infty e^{-sx-\frac{y^2}{4s}}s^{-\frac{3}{2}}ds,\quad x,y>0.
\end{equation}
Indeed,  let $|\xi|=\sqrt{x}$, we have
\begin{equation*}
\begin{split}
e^{-y\sqrt{x}}&=e^{-y|\xi|}=\int_{\R} e^{-2\pi i t\cdot\xi} \int_{\R} e^{2\pi i t\cdot\eta} e^{-y|\eta|}\,d\eta \, dt\\
&=\int_{\R} e^{-2\pi i t\cdot\xi} \Big(\int_{-\infty}^0 e^{2\pi i t\cdot\eta} e^{y\eta}\, d\eta+\int_{0}^\infty e^{2\pi i t\cdot\eta} e^{-y\eta}\,d\eta\Big) \, dt\\
&=2\int_{\R} e^{-2\pi i t\cdot\xi}\frac{y}{y^2+(2\pi t)^2} \, dt=2\int_{\R} e^{-2\pi i yt\cdot\xi}\frac{1}{1+(2\pi t)^2} \, dt\\
&=2\int_{\R} e^{-2\pi i yt\cdot\xi}\int_0^\infty e^{-r(1+(2\pi t)^2)}\, dr \, dt\\
&=2\int_0^\infty e^{-r}\int_{\R} e^{-2\pi i yt\cdot\xi}e^{-r(2\pi t)^2}\, dt\, dr \\
&=2\int_0^\infty \frac{e^{-r} e^{-\frac{\xi^2y^2}{4 r}}}{\sqrt{4\pi r}}\, dr=2\int_0^\infty \frac{e^{-r} e^{-\frac{ xy^2}{4 r}}}{\sqrt{4\pi r}}\, dr,\quad s=\frac{y^2}{4 r}, \xi=\sqrt{x}, \\
&=\frac y{2\sqrt{\pi}}\int_0^\infty e^{-\frac{y^2}{4 s}} e^{-xs} s^{-\frac32}\, ds \\
\end{split}
\end{equation*}
where we use
\begin{equation*}
\alpha^{-\frac n2}e^{-\pi|\xi|^2/\alpha}=\int_{\mathbb{R}^n}e^{-2\pi ix\xi}e^{-\pi \alpha |x|^2}dx.
\end{equation*}
To obtain $e^{it\sqrt{x}}$, we extend \eqref{change:subord} by setting $y=\epsilon-it$ with $\epsilon>0$
\begin{equation}\label{identity}
\begin{split}
e^{it\sqrt{x}}&=\lim_{\epsilon\to 0^+}e^{-(\epsilon-it)\sqrt{x}}\\
&=\lim_{\epsilon\to 0}\frac{\epsilon-it}{2\sqrt{\pi}}\int_0^\infty e^{-xs}e^{\frac{(it-\epsilon)^2}{4s}}s^{-\frac{3}{2}}ds,\quad s=r(\epsilon-it)\\
&=\lim_{\epsilon\to 0}\frac{\sqrt{\epsilon-it}}{2\sqrt{\pi}}\int_0^\infty e^{rx(it-\epsilon)}e^{\frac{it-\epsilon}{4r}}r^{-\frac{3}{2}}dr\\
&=\lim_{\epsilon\to 0}\frac{\sqrt{\epsilon-it}}{2\sqrt{\pi}}\int_0^\infty e^{itrx}e^{-\epsilon rx}e^{\frac{it}{4r}}e^{-\frac{\epsilon}{4r}} r^{-\frac{3}{2}}dr\\
&=\lim_{\epsilon\to 0}\frac{\sqrt{\epsilon-it}}{2\sqrt{\pi}}I_{\epsilon, \epsilon x}(tx,t),
\end{split}
\end{equation}
where
\begin{equation*}
I_{\epsilon, \delta}(a,t):=\int_0^\infty e^{ira}e^{-\delta r}e^{\frac{it}{4r}}e^{-\frac{\epsilon}{4r}} r^{-\frac{3}{2}}dr.
\end{equation*}
By the dominate convergence theorem, we have that
\begin{equation*}
\begin{split}
e^{it\sqrt{x}}
&=\lim_{\epsilon\to 0}\frac{\sqrt{\epsilon-it}}{2\sqrt{\pi}}I_{\epsilon, \epsilon x}(tx,t)=\sqrt{\frac{t}{4\pi}}e^{-\frac\pi 4i}\lim_{\epsilon\to0}I_{\epsilon, \epsilon x}(tx,t).
\end{split}
\end{equation*}
Thus it suffices to consider the oscillation integral
\begin{equation}\label{limit}
\begin{split}
\lim_{\epsilon\to 0}I_{\epsilon, \epsilon x}(a,t)=I_{0, 0}(a,t)=\int_0^\infty e^{ira} e^{\frac{it}{4r}} r^{-\frac{3}{2}}dr.
\end{split}
\end{equation}

\begin{lemma}\label{lem: stationary} Let
\begin{equation*}
\begin{split}
I(a,t)=\int_0^\infty e^{ira} e^{\frac{it}{4r}} r^{-\frac{3}{2}}dr.
\end{split}
\end{equation*}
Then we can write
\begin{equation}\label{est: stat}
I(a,t)=\tilde{\rho}(a, t)+\int_0^\infty e^{ira} e^{\frac{it}{4r}} \tilde{\chi} (r)\, dr,
\end{equation}
where $\tilde{\chi}(r)\in C_0^\infty(r)$ and $\mathrm{supp}\,\tilde{\chi} \subset [\frac1{16},4]$ and $\tilde{\rho}(a, t)$ satisfies
\begin{equation}\label{est:rho}
\big|\partial_a^\alpha\partial_t^\beta\tilde{\rho}(a, t)\big|\leq C_{N,\alpha,\beta} (a+t)^{-N}, \quad \frac14\leq \frac at\leq 4, \,  t\geq 1, \forall N\geq 1.
\end{equation}
\end{lemma}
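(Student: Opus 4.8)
The plan is a non-stationary phase argument. Write the phase as $\Phi(r):=ar+\tfrac{t}{4r}$, so that $I(a,t)=\int_0^\infty e^{i\Phi(r)}\,r^{-3/2}\,dr$, and note that $\Phi'(r)=a-\tfrac{t}{4r^2}$ vanishes only at $r_c=\tfrac12\sqrt{t/a}$. On the region $\tfrac14\le a/t\le4$ this gives $r_c\in[\tfrac14,1]$, so the unique stationary point sits in a fixed compact subinterval of $(0,\infty)$; the key fact to exploit is that away from it $|\Phi'|$ is bounded below by a positive multiple of $a+t$. To split it off, I would fix a cut-off $\tilde\chi\in C_0^\infty(\R)$ with $\mathrm{supp}\,\tilde\chi\subset[\tfrac1{16},4]$ and $\tilde\chi(r)=r^{-3/2}$ for $r\in[\tfrac18,2]$ (possible since $[\tfrac14,1]\subset[\tfrac18,2]\subset(\tfrac1{16},4)$), and set $g(r):=r^{-3/2}-\tilde\chi(r)$, which is $C^\infty$ on $(0,\infty)$, supported in $(0,\tfrac18]\cup[2,\infty)$, and equal to $r^{-3/2}$ for $r\le\tfrac1{16}$ and for $r\ge4$. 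With
\[
\tilde\rho(a,t):=\int_0^\infty e^{i\Phi(r)}\,g(r)\,dr
\]
one gets exactly the decomposition \eqref{est: stat}, and everything reduces to proving the bound \eqref{est:rho} on $\tilde\rho$.

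On $\mathrm{supp}\,g$ the phase is genuinely non-stationary: for $r\ge2$ one has $\tfrac{t}{4r^2}\le\tfrac t{16}\le\tfrac a4$ (using $t\le4a$), hence $\Phi'(r)\ge\tfrac34 a\gtrsim a+t$; for $0<r\le\tfrac18$ one has $r^2\le\tfrac1{64}$, so $\tfrac{t}{8r^2}\ge 8t\ge a$ (using $a\le4t$), hence $|\Phi'(r)|=\tfrac{t}{4r^2}-a\ge\tfrac{t}{8r^2}\gtrsim a+t$. Thus the standard device applies: with $L:=\tfrac{1}{i\Phi'(r)}\partial_r$ one has $Le^{i\Phi}=e^{i\Phi}$, and integrating by parts $N$ times replaces the amplitude $h$ by $-\partial_r\!\big(h/(i\Phi')\big)$ at each step, giving $\tilde\rho(a,t)=\int_0^\infty e^{i\Phi(r)}\,g_N(r)\,dr$ with $g_N$ assembled from $g$, $1/\Phi'$ and their derivatives. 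No boundary contributions arise at $r=\tfrac18$ or $r=2$ since $g$ vanishes to infinite order there; near $r=\infty$, where $g(r)=r^{-3/2}$, each step contributes the decaying factor $1/\Phi'=O(1/a)$ (plus $\Phi''/(\Phi')^2=O\big(t/(a^2r^3)\big)$ when the derivative hits $1/\Phi'$), so $g_N$ is absolutely integrable at infinity and no boundary term survives; and near $r=0^+$, where $g(r)=r^{-3/2}$ and the integral is only conditionally (oscillatory) convergent, each step raises the power of $r$ by one and brings a factor $O(1/t)$ (since $1/\Phi'(r)\sim-4r^2/t$ as $r\to0$), so $g_N$ becomes genuinely integrable up to $0$ for $N$ large, the boundary term at $0$ vanishes, and altogether $|\tilde\rho(a,t)|\lesssim(a+t)^{-N}\lesssim t^{-N}$ on $\tfrac14\le a/t\le4$, $t\ge1$. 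For the derivative bounds one differentiates under the integral sign, $\partial_a^\alpha\partial_t^\beta\tilde\rho(a,t)=\int_0^\infty e^{i\Phi(r)}(ir)^\alpha(i/4r)^\beta g(r)\,dr$, and reruns the same integration by parts; the extra algebraic factor $r^{\alpha-\beta}$ merely shifts the exponents of $r$ tracked above, so for each $\alpha,\beta$ one still obtains $|\partial_a^\alpha\partial_t^\beta\tilde\rho(a,t)|\le C_{N,\alpha,\beta}(a+t)^{-N}$, which is \eqref{est:rho}.

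I expect the only delicate point to be the endpoint $r\to0^+$: there the amplitude $r^{-3/2}$ (and $r^{-3/2-\beta}$ after $\beta$ derivatives in $t$) is not integrable, so $I$ and $\tilde\rho$ must be understood as oscillatory integrals, and one must verify carefully that integration by parts is legitimate there — that the boundary terms at $0$ truly vanish and that after finitely many steps the amplitude $g_N$ is absolutely integrable near $0$ with the claimed $t^{-N}$ gain. Everything else — the region near $\infty$, the differentiations in $a$ and $t$, and the passage from $(a+t)^{-N}$ to $t^{-N}$ using that $a$ and $t$ are comparable on the relevant region — is routine once the lower bound $|\Phi'(r)|\gtrsim a+t$ on $\mathrm{supp}\,g$ is in hand.
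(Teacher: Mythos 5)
Your proposal is correct, and the analytic core is the same as the paper's: away from the stationary point $r_c=\tfrac12\sqrt{t/a}\in[\tfrac14,1]$ you exploit exactly the same lower bounds on the phase derivative ($|\Phi'|\gtrsim a+t$ for $r\geq 2$ using $t\leq 4a$, and $|\Phi'|\geq \tfrac{t}{8r^2}\gtrsim a+t$ for small $r$ using $a\leq 4t$) and then integrate by parts $N$ times. Where you differ is the decomposition and the bookkeeping: you split off the middle region with a single cutoff $\tilde\chi$ equal to $r^{-3/2}$ on $[\tfrac18,2]$ and run the integration by parts directly on the two non-compact pieces $(0,\tfrac18]$ and $[2,\infty)$, tracking boundary terms and the conditional (oscillatory) convergence at $r\to 0^+$, whereas the paper performs a dyadic (Littlewood--Paley) decomposition $I=\sum_j I_j$, estimates each compactly supported piece by non-stationary phase with no boundary terms, and sums a geometric series ($\sum_{j\geq 2}2^{-j/2}$ at high frequencies, $\sum_{j\leq -5}2^{j(N+\alpha-\beta-\frac12)}$ at low frequencies). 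The dyadic route buys you absolute convergence of every piece, so differentiation under the integral sign and the absence of boundary contributions are automatic, with both endpoints $r\to 0$ and $r\to\infty$ treated uniformly; your route is more direct and avoids the summation, but it puts all the weight on the boundary/convergence analysis you flag at $r\to 0$ --- and note that the same care is needed at $r\to\infty$ once you take $\alpha$ derivatives in $a$, since the amplitude there becomes $r^{\alpha-\beta-3/2}$, which need not be absolutely integrable; the identical mechanism (each integration by parts gains $(a+t)^{-1}$ and lowers the power of $r$, with vanishing boundary terms after regularizing, e.g.\ truncating to $[\epsilon,R]$ and letting $\epsilon\to0$, $R\to\infty$) closes this, so it is an omission of emphasis rather than a gap. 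Your version also matches the stated support condition $\mathrm{supp}\,\tilde\chi\subset[\tfrac1{16},4]$ exactly, whereas the paper's choice $\tilde\chi=\sum_{j=-4}^{1}\beta_j r^{-3/2}$ is in fact supported in $[\tfrac1{32},4]$, a harmless discrepancy.
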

We now assume this lemma to prove \eqref{key}. By \eqref{identity} and \eqref{limit} and noticing
$$I(a,t)=2^{\frac j2}I(2^{-j}a, 2^jt),$$ we have
that \begin{equation*}
\varphi(2^{-j}\sqrt{x})e^{it\sqrt{x}}=\sqrt{\frac{t}{4\pi}}e^{-\frac\pi 4i}\varphi(2^{-j}\sqrt{x}) 2^{\frac j2} I\big(\frac{tx}{2^j}, 2^jt\big).
\end{equation*}
By the support of $\varphi$, one has $2^{2j-2}\leq x\leq 2^{2j+2}$ , hence $\frac14\leq \frac{tx}{2^j}/2^jt=x/2^{2j}\leq 4$. Note the condition $2^jt\geq 1$.
Therefore, by using this lemma, we prove
\begin{equation*}
\begin{split}
&\varphi(2^{-j}\sqrt{x})e^{it\sqrt{x}}\\=&\frac1{\sqrt{4\pi}}e^{-\frac\pi 4i}\big(2^jt\big)^{\frac12}\varphi(2^{-j}\sqrt{x}) \Big(\tilde{\rho}\big(\frac{tx}{2^j}, 2^jt\big)
+\int_0^\infty \tilde{\chi}(s)e^{\frac{i2^jt}{4s}}e^{i2^{-j}txs}\,ds\Big).
\end{split}
\end{equation*}
We need consider this expression when $2^jt\geq 1$. To this end, let $\phi\in C^\infty([0,+\infty)$ satisfies $\phi(t)=1$ if $t\ge1$ and $\phi(t)=0$ if $0\leq t\leq \frac12$,
then set $$\rho\big(\frac{tx}{2^j}, 2^jt\big)=e^{-\frac\pi 4i}\big(2^jt\big)^{\frac12}\varphi(2^{-j}\sqrt{x})\tilde{\rho}\big(\frac{tx}{2^j}, 2^jt\big)\phi(2^jt). $$
This together with \eqref{est:rho} shows
\begin{equation*}
\big|\partial_a^\alpha\partial_t^\beta{\rho}(a, t)\big|\leq C_{N,\alpha,\beta}(1+ (a+t))^{-N}, \quad  \forall N\geq 0.
\end{equation*}
which implies
$\rho(a,t)\in \mathcal{S}(\R_+\times\R_+)$.
Set $$\chi\big(s, 2^jt\big)=e^{-\frac\pi 4i}\tilde{\chi}\big(s\big)\phi(2^jt),$$
 then $\chi$ satisfies \eqref{est:chi}.
 then we finally write
\begin{equation*}
\begin{split}
&\varphi(2^{-j}\sqrt{x})e^{it\sqrt{x}}\\=&\rho\big(\frac{tx}{2^j}, 2^jt\big)
+\big(2^jt\big)^{\frac12}\varphi(2^{-j}\sqrt{x}) \int_0^\infty \chi(s,2^jt)e^{\frac{i2^jt}{4s}}e^{i2^{-j}txs}\,ds,
\end{split}
\end{equation*}
which proves \eqref{key} as desired.
\end{proof}

\begin{proof}[The proof of Lemma \ref{lem: stationary}] To prove \eqref{est: stat}, we divide the integral into three pieces.
Let $\beta(r)$ be a function in $C^\infty(\R)$ compact supported in $[\frac1{2}, 2]$ such that
$$1=\sum_{j\in\Z}\beta_{j}(r), \quad \beta_j(r)=\beta(2^{-j}r).$$
Corresponding to $\beta_j$, we decompose
$$I(a,t)=\sum_{j\in\Z} I_j(a,t)=I_{l}(a,t)+I_{m}(a,t)+I_{h}(a,t)$$ where
\begin{equation*}
\begin{split}
I_{l}(a,t)=\sum_{j\leq -5} I_j(a,t),\quad &I_{m}(a,t)=\sum_{-4\le j\leq 1} I_j(a,t),\quad I_{h}(a,t)=\sum_{j\geq 2} I_j(a,t),\\
&I_j(a,t)=\int_0^\infty e^{ira} e^{\frac{it}{4r}}\beta_j(r) r^{-\frac{3}{2}}dr.
\end{split}
\end{equation*}
Define the phase function $\phi_{a, t}(r)=ra+\frac t{4r}$, then
$$\phi'_{a,t}(r)=a-\frac{t}{4r^2}.$$
Define
$$\tilde{\rho}(a,t)=I_l(a, t)+I_h(a, t),$$
we aim to prove \eqref{est:rho}.
We first consider $I_h(a, t)$.
\begin{equation*}
\begin{split}
\big|\partial_a^\alpha\partial_t^\beta I_h(a, t)\big|&=\sum_{j\geq 2}\Big|\int_0^\infty e^{ira} e^{\frac{it}{4r}}\beta_j(r) r^{-\frac{3}{2}+\alpha}\big(\frac 1{4r}\big)^{\beta} dr\Big|\\
&\leq C \sum_{j\geq 2}\int_0^\infty  \Big|\frac{d}{dr}\Big[\Big(\frac1{\phi'_{a,t}(r)}\frac{d}{dr}\Big)^{N-1}\Big(\frac1{\phi'_{a,t}(r)}\beta_j(r) r^{-\frac{3}{2}+\alpha-\beta}\Big)\Big]\Big|\, dr.
\end{split}
\end{equation*}
Due to that $\mathrm{supp}\,\beta_j\subset [2^{j-1}, 2^{j+1}]$ with $j\geq 2$ implies $r\in [2, \infty)$ and the assumption $\frac{a}t\geq \frac14$,  we see that
$$\big|\phi'_{a,t}(r)\big|=\big|a-\frac{t}{4r^2}\big|\geq \frac{a+t}{16}.$$
For choosing $N>\alpha$, we notice the fact that
$$\Big|\Big(\frac{d}{dr}\Big)^{K}\Big(\frac1{\phi'_{a,t}(r)}\Big)\Big|\leq C_K (a+t)^{-1} r^{-K}, \quad t, r\in [1,+\infty)$$
to obtain
$$  \Big|\frac{d}{dr}\Big[\Big(\frac1{\phi'_{a,t}(r)}\frac{d}{dr}\Big)^{N-1}\Big(\frac1{\phi'_{a,t}(r)}\beta_j(r) r^{-\frac{3}{2}+\alpha-\beta}\Big)\Big]\Big|\leq C_N (a+t)^{-N}2^{-\frac 32j}.$$
Therefore, we prove
\begin{equation*}
\big|\partial_a^\alpha\partial_t^\beta I_h(a, t)\big|
\leq C_N  (a+t)^{-N} \sum_{j\geq 2}  2^{-\frac12j}\,\leq C_N  (a+t)^{-N},\quad N\geq \alpha.
\end{equation*}
Next we consider $I_l(a, t)$.
\begin{equation*}
\big|\partial_a^\alpha\partial_t^\beta I_l(a, t)\big|=\sum_{j\leq -5}\Big|\int_0^\infty e^{ira} e^{\frac{it}{4r}}\beta_j(r) r^{-\frac{3}{2}+\alpha}\big(\frac 1{4r}\big)^{\beta} dr\Big|.
\end{equation*}
\begin{equation*}
\begin{split}
&\sum_{j\leq -5}\Big|\int_0^\infty e^{\frac{it}{4r}}e^{ira}{\beta}_j(r) r^{-\frac{3}{2}+\alpha-\beta} dr\Big|\\
&\leq C_N \sum_{j\leq -5}\int_0^\infty  \Big|\frac{d}{dr}\Big[\Big(\frac1{\phi'_{a,t}(r)}\frac{d}{dr}\Big)^{N-1}\Big(\frac1{\phi'_{a,t}(r)}{\beta}_j(r) r^{-\frac{3}{2}+\alpha-\beta}\Big)\Big]\Big|\, dr.
\end{split}
\end{equation*}
Due to that $\mathrm{supp}\,\beta_j\subset [2^{j-1}, 2^{j+1}]$ with $j\leq -5$ implies $r\in (0, \frac1{16}]$ and the assumption $\frac{a}t\leq 4$, we see that
$$\big|\phi'_{a,t}(r)\big|=\big|a-\frac{t}{4r^2}\big|=\frac{|4r^2a-t|}{4r^2}\geq \frac{a+t}{32r^2}.$$
For choosing $N>\alpha$, we notice the fact that
$$\Big|\Big(\frac{d}{dr}\Big)^{K}\Big(\frac1{\phi'_{a,t}(r)}\Big)\Big|\leq C_K (a+t)^{-1} r^{2-K}, \quad t\in [1,+\infty), r\in (0, \frac1{16}]$$
to obtain
\begin{equation*}
\begin{split}
& \Big|\frac{d}{dr}\Big[\Big(\frac1{\phi'_{a,t}(r)}\frac{d}{dr}\Big)^{N-1}\Big(\frac1{\phi'_{a,t}(r)}{\beta}_j(r) r^{-\frac{3}{2}+\alpha-\beta}\Big)\Big]\Big|\\
 &\leq C_N (a+t)^{-N}2^{j\big(-\frac{3}{2}+\alpha-\beta+N\big)}.
 \end{split}
\end{equation*}
Therefore, for large enough $N$ such that $-\frac{1}{2}+\alpha-\beta+N>0$,  we prove
\begin{equation*}
\begin{split}
\big|\partial_a^\alpha\partial_t^\beta I_h(a, t)\big|
&\leq C_N  (a+t)^{-N}\sum_{j\leq -5}  2^{j\big(-\frac{1}{2}+\alpha-\beta+N\big)}\\
&\leq C_N  (a+t)^{-N},
\end{split}
\end{equation*}
where we use the assumption that $\frac a t\leq 4$ and $t\geq 1$. In sum, we prove \eqref{est:rho}. Let
$$\tilde{\chi}(r)=\sum_{j=-4}^{1}\beta_j(r) r^{-\frac32},$$
then $\tilde{\chi}(r)\in C_0^\infty(r)$ and $\mathrm{supp}\,\tilde{\chi} \subset [\frac1{16},4]$.
Hence we have
\begin{equation*}
I_m(a,t)=\int_0^\infty e^{ira} e^{\frac{it}{4r}}\tilde{\chi}(r) dr.
\end{equation*}
Therefore, we complete the proof of Lemma \ref{lem: stationary}.
\end{proof}

\subsection{Decay estimates for the microlocalized half-wave propagator}
In this subsection, we mainly prove the following results
\begin{proposition} Let $2^{-j}|t|\leq \frac{\pi}{8B_0}$ and $\varphi$ be in \eqref{LP-dp}, then
\begin{equation}\label{est: mic-decay1}
\begin{split}
\big\|\varphi(2^{-j}\sqrt{H_{\alpha,B_0}})&e^{it\sqrt{H_{\alpha,B_0}}}f\big\|_{L^\infty(\R^2)}\\&\lesssim 2^{2j}\big(1+2^j|t|\big)^{-\frac12}\|\varphi(2^{-j}\sqrt{H_{\alpha,B_0}}) f\|_{L^1(\R^2)}.
\end{split}
\end{equation}
In particular, for $0<t<T$ with any finite $T$, there exists a constant $C_T$ depending on $T$ such that
\begin{equation}\label{est: mic-decay2}
\begin{split}
\big\|\varphi(2^{-j}\sqrt{H_{\alpha,B_0}})&e^{it\sqrt{H_{\alpha,B_0}}}f\big\|_{L^\infty(\R^2)}\\&\leq C_T 2^{2j}\big(1+2^j|t|\big)^{-\frac12}\|\varphi(2^{-j}\sqrt{H_{\alpha,B_0}}) f\|_{L^1(\R^2)}.
\end{split}
\end{equation}
\end{proposition}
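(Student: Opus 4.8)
The plan is to deduce \eqref{est: mic-decay1} from the operator bound
\[
\big\|\varphi(2^{-j}\sqrt{H_{\alpha,B_0}})\,e^{it\sqrt{H_{\alpha,B_0}}}g\big\|_{L^\infty(\R^2)}\lesssim 2^{2j}\big(1+2^j|t|\big)^{-1/2}\|g\|_{L^1(\R^2)},\qquad 2^{-j}|t|\le\tfrac{\pi}{8B_0},
\]
the stated form being a routine frequency-localization consequence using a fattened cutoff on the input $g=\varphi(2^{-j}\sqrt{H_{\alpha,B_0}})f$ together with the $L^1$-boundedness of Littlewood--Paley projections supplied by \eqref{est:Bern}. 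We may assume $t>0$. I would treat the regimes $2^jt\le1$ and $2^jt\ge1$ separately. For $2^jt\le1$ I would argue directly by the functional-calculus technique used to prove Bernstein's inequality: with $\varphi(2^{-j}\sqrt{H_{\alpha,B_0}})e^{it\sqrt{H_{\alpha,B_0}}}=n_{j,t}(2^{-2j}H_{\alpha,B_0})$, $n_{j,t}(\mu)=\varphi(\sqrt\mu)e^{i2^jt\sqrt\mu}$ — a $C_c^\infty$ function with $C^k$-norms bounded uniformly in $j,t$ because $2^jt\le1$ — I would write $n_{j,t}(\mu)=e^{-\mu}\int_\R e^{-(1-i\xi)\mu}\,\widehat{p_{j,t}}(\xi)\,d\xi$ with $p_{j,t}(\mu)=n_{j,t}(\mu)e^{2\mu}$, estimate the kernel of $e^{-2^{-2j}H_{\alpha,B_0}}$ by \eqref{est:G-H} and that of $e^{-(1-i\xi)2^{-2j}H_{\alpha,B_0}}$ by the complex Gaussian bound of Proposition \ref{prop:Gassian-H} (the polynomial loss in $\xi$ being absorbed by the Schwartz decay of $\widehat{p_{j,t}}$), arriving at $|\varphi(2^{-j}\sqrt{H_{\alpha,B_0}})e^{it\sqrt{H_{\alpha,B_0}}}(x,y)|\lesssim_N 2^{2j}(1+2^j|x-y|)^{-N}$; Young's inequality then gives the claim, since $(1+2^jt)^{-1/2}\simeq1$ in this range. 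This same computation in fact yields, whenever $2^j|t|\le M$, the bound $\lesssim_M 2^{2j}\|g\|_{L^1}$, which I will reuse.

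For $2^jt\ge1$ (with $2^{-j}t\le\pi/(8B_0)$) I would invoke the subordination representation \eqref{key-operator} and estimate its two summands. The term $\rho(tH_{\alpha,B_0}/2^j,2^jt)$ requires unwinding the construction in Lemma \ref{lem: stationary}: it equals $e^{-i\pi/4}(2^jt)^{1/2}\phi(2^jt)\,\varphi(2^{-j}\sqrt{H_{\alpha,B_0}})\,\tilde\rho(tH_{\alpha,B_0}/2^j,2^jt)$, and the cutoff $\varphi(2^{-j}\sqrt\lambda)$ confines the spectral parameter to $\lambda\simeq2^{2j}$, where $t\lambda/2^j\simeq2^jt$ and the ratio $(t\lambda/2^j)/(2^jt)$ lies in $[\tfrac14,4]$ — precisely the cone on which \eqref{est:rho} gives $\lesssim_N(2^jt)^{-N}$ for every $N$, together with the corresponding bounds for rescaled derivatives. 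Hence $\rho(tH_{\alpha,B_0}/2^j,2^jt)$ is frequency localized at scale $2^j$ with rescaled symbol bounds $\lesssim_N(2^jt)^{-N}$, and the functional-calculus argument of the first regime bounds its kernel by $\lesssim_{N,M}(2^jt)^{-N}2^{2j}(1+2^j|x-y|)^{-M}$; since $2^jt\ge1$ this is far stronger than $2^{2j}(1+2^jt)^{-1/2}$. For the oscillatory integral term I would bring the $L^\infty$ norm inside the $s$-integral, estimate $\|\varphi(2^{-j}\sqrt{H_{\alpha,B_0}})e^{i2^{-j}tsH_{\alpha,B_0}}g\|_{L^\infty}\lesssim\|e^{i2^{-j}tsH_{\alpha,B_0}}g\|_{L^\infty}$ by the $L^\infty\to L^\infty$ Bernstein bound, and then apply the Schr\"odinger dispersive estimate $\|e^{i\tau H_{\alpha,B_0}}h\|_{L^\infty}\le C|\sin(\tau B_0)|^{-1}\|h\|_{L^1}$ from \cite{WZZ}; since $s\in[\tfrac1{16},4]=\mathrm{supp}\,\chi(\cdot,2^jt)$ and $2^{-j}t\le\pi/(8B_0)$, the argument $2^{-j}tsB_0$ lies in $(0,\tfrac\pi2]$, so $\sin(2^{-j}tsB_0)\ge\tfrac2\pi\,2^{-j}tsB_0$ and $|\sin(2^{-j}tsB_0)|^{-1}\lesssim 2^j/(ts)$. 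Together with $|\chi(s,2^jt)|\lesssim1$ and $\int_{1/16}^4 s^{-1}\,ds<\infty$ from \eqref{est:chi}, the integral term is bounded by
\[
(2^jt)^{1/2}\int_0^\infty|\chi(s,2^jt)|\,\frac{C\,2^j}{B_0\,ts}\,ds\,\|g\|_{L^1}\;\lesssim\;(2^jt)^{1/2}\frac{2^j}{t}\|g\|_{L^1}\;=\;2^{2j}(2^jt)^{-1/2}\|g\|_{L^1}\;\lesssim\;2^{2j}(1+2^jt)^{-1/2}\|g\|_{L^1}.
\]
Adding the two contributions proves the operator bound, hence \eqref{est: mic-decay1}. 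Finally, \eqref{est: mic-decay2} follows by cases: if $2^{-j}|t|\le\pi/(8B_0)$ it is \eqref{est: mic-decay1}; otherwise $2^j<8B_0T/\pi$, so $2^j|t|<8B_0T^2/\pi=:M_T$ and the uniform first-regime bound gives $\lesssim_{M_T}2^{2j}\|g\|_{L^1}\le C_T\,2^{2j}(1+2^j|t|)^{-1/2}\|g\|_{L^1}$, since $(1+2^j|t|)^{-1/2}\ge(1+M_T)^{-1/2}$.

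I expect the $\rho$-term to be the main obstacle: one must look inside the proof of Lemma \ref{lem: stationary} to recognise that $\rho(tH_{\alpha,B_0}/2^j,2^jt)$ is genuinely frequency localized at scale $2^j$ and decays rapidly in $2^jt$ on that localization — rather than being merely Schwartz in its two scalar arguments — so that the heat-kernel bounds of Proposition \ref{prop:Gassian-H} can be applied to it. Once that is in place the rest is bookkeeping, the only delicate numerical points being that the powers of $2^j$ in the oscillatory integral term combine exactly into $2^{2j}(2^jt)^{-1/2}$, and that the threshold $\pi/(8B_0)$ is precisely what keeps $\sin(2^{-j}tsB_0)$ positive and comparable to its argument throughout the support of $\chi(\cdot,2^jt)$.
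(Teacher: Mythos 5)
Your argument is correct and follows the same skeleton as the paper: a crude bound when $2^jt\lesssim 1$, the subordination representation \eqref{key-operator} when $2^jt\geq 1$, the Schr\"odinger dispersive estimate from \cite{WZZ} with $\sin(2^{-j}tsB_0)\gtrsim 2^{-j}tsB_0$ on $\mathrm{supp}\,\chi$ (this is exactly what the threshold $\tfrac{\pi}{8B_0}$ is for), and the extension to \eqref{est: mic-decay2} by noting that in the complementary regime $2^jt$ is bounded by a $T$-dependent constant. The two places where you diverge are both subsidiary estimates, and in each the paper is lighter. For $2^jt\lesssim 1$ (and more generally $2^jt\leq M$) the paper does not produce a pointwise kernel bound at all: it simply combines the $L^2$-isometry of $e^{it\sqrt{H_{\alpha,B_0}}}$ (spectral theorem) with the Bernstein inequality \eqref{est:Bern} used twice, $L^1\to L^2$ and $L^2\to L^\infty$; your complex-time heat-kernel computation via Proposition \ref{prop:Gassian-H} also works (given the uniform $C^k$ control of $\varphi(\sqrt\mu)e^{i2^jt\sqrt\mu}$ and a little care with the $\xi$-dependent Gaussian width), but it proves more than is needed. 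More importantly, your anticipated ``main obstacle'' for the $\rho$-term is not one: there is no need to reopen Lemma \ref{lem: stationary} or to establish rescaled symbol bounds, because $f=\varphi(2^{-j}\sqrt{H_{\alpha,B_0}})f$ (fattened-cutoff convention), so the paper just uses the pointwise spectral bound $|\rho(t\lambda_{k,m}/2^j,2^jt)|\leq C_N(1+2^jt)^{-N}$, the spectral theorem on $L^2$, and Bernstein once more to get $2^{2j}(1+2^jt)^{-N}\|\varphi(2^{-j}\sqrt{H_{\alpha,B_0}})f\|_{L^1}$; Schwartz decay in the two scalar arguments is all that is used. Your heavier route buys a kernel bound with spatial decay for that term, which is stronger than required here. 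The oscillatory-integral estimate and the final case analysis coincide with the paper's.
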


\begin{remark} The finite $T$ can be choosen beyond  $\frac{\pi}{B_0}$. If we could prove \eqref{est: mic-decay2},
then \eqref{dis-w} follows
\begin{equation*}
\begin{split}
&\big\|e^{it\sqrt{H_{\alpha,B_0}}}f\big\|_{L^\infty(\R^2)}\leq \sum_{j\in\Z}\big\|\varphi(2^{-j}\sqrt{H_{\alpha,B_0}})e^{it\sqrt{H_{\alpha,B_0}}}f\big\|_{L^\infty(\R^2)}\\
&\leq C_T |t|^{-\frac12}\sum_{j\in\Z}2^{\frac32j}\|\varphi(2^{-j}\sqrt{H_{\alpha,B_0}}) f\|_{L^1(\R^2)}\leq C_T |t|^{-\frac12}\|f\|_{\dot{\mathcal{B}}^{3/2}_{1,1}(\R^2)}.
\end{split}
\end{equation*}
\end{remark}

We estimate  the microlocalized half-wave propagator $$ \big\|\varphi(2^{-j}\sqrt{H_{\alpha,B_0}})e^{it\sqrt{H_{\alpha,B_0}}}f\big\|_{L^\infty(\R^2)}$$
by considering two cases that: $|t|2^j\geq 1$ and $|t|2^{j}\lesssim 1$. In the following argument, we can choose $\tilde{\varphi}\in C_c^\infty((0,+\infty))$ such that $\tilde{\varphi}(\lambda)=1$ if $\lambda\in\mathrm{supp}\,\varphi$
and $\tilde{\varphi}\varphi=\varphi$. Since $\tilde{\varphi}$ has the same property of $\varphi$, without confusion, we drop off the tilde above $\varphi$ for brief. Without loss of generality, in the following argument, we assume $t>0$.
\vspace{0.2cm}

{\bf Case 1: $t2^{j}\lesssim 1$.} We remark that we consider $t2^{j}\lesssim 1$ while not $t2^{j}\leq 1$, this will be used to extend the time interval. By the spectral theorem, one has
$$\|e^{it\sqrt{H_{\alpha,B_0}}}\|_{L^2(\R^2)\to L^2(\R^2)}\leq C.$$
Indeed, by the functional calculus, for $f\in L^2$,
we can write
\begin{equation*}
\begin{split}
e^{it\sqrt{H_{\alpha,B_0}}} f&=\sum_{k\in\Z, \atop m\in\mathbb{N}} e^{it\sqrt{\lambda_{k,m}}} c_{k,m}\tilde{V}_{k,m}(x).
\end{split}
\end{equation*}
where
\begin{equation*}
\begin{split}
c_{k,m}=\int_{\mathbb{R}^2}f(y)\overline{\tilde{V}_{k,m}(y)} dy.
\end{split}
\end{equation*}
Then
\begin{equation*}
\begin{split}
\|e^{it\sqrt{H_{\alpha,B_0}}} f\|_{L^2(\R^2)}=\Big(\sum_{k\in\Z, \atop m\in\mathbb{N}} \big| e^{it\sqrt{\lambda_{k,m}}} c_{k,m}\big|^2\Big)^{1/2}
=\Big(\sum_{k\in\Z, \atop m\in\mathbb{N}} \big| c_{k,m}\big|^2\Big)^{1/2}
=\|f\|_{L^2(\R^2)}.
\end{split}
\end{equation*}
Together with this, we use the Bernstein inequality \eqref{est:Bern}
 to prove
\begin{equation*}
\begin{split}
&\big\|\varphi(2^{-j}\sqrt{H_{\alpha,B_0}})e^{it\sqrt{H_{\alpha,B_0}}}f\big\|_{L^\infty(\R^2)}\\
&\lesssim 2^{j}\|e^{it\sqrt{H_{\alpha,B_0}}}\varphi(2^{-j}\sqrt{H_{\alpha,B_0}}) f\|_{L^2(\R^2)}\\
&\lesssim 2^{j}\|\varphi(2^{-j}\sqrt{H_{\alpha,B_0}}) f\|_{L^2(\R^2)}\lesssim 2^{2j}\|\varphi(2^{-j}\sqrt{H_{\alpha,B_0}}) f\|_{L^1(\R^2)}.
\end{split}
\end{equation*}
In this case $0<t\lesssim 2^{-j}$, we have
\begin{equation}\label{<1}
\begin{split}
&\big\|\varphi(2^{-j}\sqrt{H_{\alpha,B_0}})e^{it\sqrt{H_{\alpha,B_0}}}f\big\|_{L^\infty(\R^2)}\\
&\quad\lesssim 2^{2j}(1+2^jt)^{-N}\|\varphi(2^{-j}\sqrt{H_{\alpha,B_0}}) f\|_{L^1(\R^2)},\quad \forall N\geq 0.
\end{split}
\end{equation}\vspace{0.2cm}

{\bf Case 2: $t2^{j}\geq 1$.} In this case, we can use
 \eqref{key-operator} to obtain the micolocalized
half-wave propagator
\begin{equation*}
\begin{split}
&\varphi(2^{-j}\sqrt{H_{\alpha,B_0}})e^{it\sqrt{H_{\alpha,B_0}}}\\&=\rho\big(\frac{tH_{\alpha,B_0}}{2^j}, 2^jt\big)
+\varphi(2^{-j}\sqrt{H_{\alpha,B_0}})\big(2^jt\big)^{\frac12}\int_0^\infty \chi(s,2^jt)e^{\frac{i2^jt}{4s}}e^{i2^{-j}tsH_{\alpha,B_0}}\,ds.
\end{split}
\end{equation*}
We first use the spectral theorems and the Bernstein inequality again to estimate
\begin{equation*}
\begin{split}
&\big\|\varphi(2^{-j}\sqrt{H_{\alpha,B_0}})\rho\big(\frac{tH_{\alpha,B_0}}{2^j}, 2^jt\big) f\big\|_{L^\infty(\R^2)}.
\end{split}
\end{equation*}
Indeed, since $\rho\in \mathcal{S}(\R\times\R)$, then $$\big|\rho\big(\frac{t\lambda_{k,m}}{2^j}, 2^jt\big)\big|\leq C(1+2^jt)^{-N},\quad \forall N\geq 0.$$
Therefore, we use  the Bernstein inequality and  the spectral theorems to show
\begin{equation*}
\begin{split}
&\big\|\varphi(2^{-j}\sqrt{H_{\alpha,B_0}})\rho\big(\frac{tH_{\alpha,B_0}}{2^j}, 2^jt\big) f\big\|_{L^\infty(\R^2)}\\
&\lesssim 2^{j}\Big\|\rho\big(\frac{tH_{\alpha,B_0}}{2^j}, 2^jt\big)\varphi(2^{-j}\sqrt{H_{\alpha,B_0}}) f\Big\|_{L^2(\R^2)}\\
&\lesssim 2^{j}(1+2^jt)^{-N}\Big\|\varphi(2^{-j}\sqrt{H_{\alpha,B_0}}) f\Big\|_{L^2(\R^2)}\\
&\lesssim 2^{2j}(1+2^jt)^{-N}\Big\|\varphi(2^{-j}\sqrt{H_{\alpha,B_0}}) f\Big\|_{L^1(\R^2)}.
\end{split}
\end{equation*}

Next we use the dispersive estimates of Schr\"odinger propagator (see \cite[Theorem 1.1]{WZZ})
\begin{equation*}
\begin{split}
&\big\|e^{itH_{\alpha,B_0}} f\big\|_{L^\infty(\R^2)}\leq C |\sin(tB_0)|^{-1}\big\| f\big\|_{L^1\R^2)},\quad t\neq \frac{k\pi}{B_0}, \,k\in \Z,
\end{split}
\end{equation*}
 to estimate
\begin{equation*}
\begin{split}
&\big\|\varphi(2^{-j}\sqrt{H_{\alpha,B_0}})\big(2^jt\big)^{\frac12}\int_0^\infty \chi(s,2^jt)e^{\frac{i2^jt}{4s}}e^{i2^{-j}tsH_{\alpha,B_0}}f\,ds \big\|_{L^\infty(\R^2)}.
\end{split}
\end{equation*}
For $0<t<T_0<\frac{\pi}{2B_0}$, then $\sin(tB_0)\sim tB_0$
\begin{equation*}
\begin{split}
&\big\|\varphi(2^{-j}\sqrt{H_{\alpha,B_0}})\big(2^jt\big)^{\frac12}\int_0^\infty \chi(s,2^jt)e^{\frac{i2^jt}{4s}}e^{i2^{-j}tsH_{\alpha,B_0}}f\,ds \big\|_{L^\infty(\R^2)}\\
&\lesssim\big(2^jt\big)^{\frac12}\int_0^\infty \chi(s,2^jt)|\sin(2^{-j}ts B_0)|^{-1}\,ds \big\|\varphi(2^{-j}\sqrt{H_{\alpha,B_0}})f\big\|_{L^1(\R^2)}.
\end{split}
\end{equation*}
Since $s\in [\frac1{16}, 4]$ (the compact support of $\chi$ in $s$) and $B_0>0$,  if $2^{-j}t\leq \frac{\pi}{8B_0}$, then
\begin{equation}\label{>1}
\begin{split}
&\big\|\varphi(2^{-j}\sqrt{H_{\alpha,B_0}})\big(2^jt\big)^{\frac12}\int_0^\infty \chi(s,2^jt)e^{\frac{i2^jt}{4s}}e^{i2^{-j}tsH_{\alpha,B_0}}f\,ds \big\|_{L^\infty(\R^2)}\\
&\lesssim\big(2^jt\big)^{\frac12}(2^{-j}t)^{-1}\int_0^\infty \chi(s,2^jt)\,ds \big\|\varphi(2^{-j}\sqrt{H_{\alpha,B_0}})f\big\|_{L^1(\R^2)}\\
&\lesssim 2^{2j}\big(2^jt\big)^{-\frac12}\big\|\varphi(2^{-j}\sqrt{H_{\alpha,B_0}})f\big\|_{L^1(\R^2)}\\
&\lesssim 2^{2j}\big(1+2^jt\big)^{-\frac12}\big\|\varphi(2^{-j}\sqrt{H_{\alpha,B_0}})f\big\|_{L^1(\R^2)}.
\end{split}
\end{equation}
Collecting \eqref{<1} and \eqref{>1}, it gives \eqref{est: mic-decay1}. To prove \eqref{est: mic-decay2}, we consider $0<t<T$.
For any $T>0$, there exists $j_0$ such that $2^{-j_0}T\leq \frac{\pi}{8B_0}$ with $j_0\in\Z_+$. For $j\leq j_0$, then
$2^j t\lesssim 1$, then one has \eqref{est: mic-decay2} from the first case. While for $j\geq j_0$,  if  $2^j t\lesssim 1$, one still has \eqref{est: mic-decay2} from the first case.
Otherwise, i.e. $2^j t\geq 1$, one has \eqref{est: mic-decay2} from the second case, since we always have $2^{-j}t\leq \frac{\pi}{8B_0}$ for $j\geq j_0$ and $0<t\leq T$.

\section{Strichartz estimate}\label{sec:str}
In this section, we prove the Strichartz estimates \eqref{stri:w} in Theorem \ref{thm:dispersive} by using \eqref{est: mic-decay2}.
To this end, we need a variety of the abstract Keel-Tao's Strichartz estimates
theorem (\cite{KT}).
\begin{proposition}\label{prop:semi}
Let $(X,\mathcal{M},\mu)$ be a $\sigma$-finite measured space and
$U: I=[0,T]\rightarrow B(L^2(X,\mathcal{M},\mu))$ be a weakly
measurable map satisfying, for some constants $C$ may depending on $T$, $\alpha\geq0$,
$\sigma, h>0$,
\begin{equation}\label{md}
\begin{split}
\|U(t)\|_{L^2\rightarrow L^2}&\leq C,\quad t\in \mathbb{R},\\
\|U(t)U(s)^*f\|_{L^\infty}&\leq
Ch^{-\alpha}(h+|t-s|)^{-\sigma}\|f\|_{L^1}.
\end{split}
\end{equation}
Then for every pair $q,p\in[1,\infty]$ such that $(q,p,\sigma)\neq
(2,\infty,1)$ and
\begin{equation*}
\frac{1}{q}+\frac{\sigma}{p}\leq\frac\sigma 2,\quad q\ge2,
\end{equation*}
there exists a constant $\tilde{C}$ only depending on $C$, $\sigma$,
$q$ and $r$ such that
\begin{equation*}
\Big(\int_{I}\|U(t) u_0\|_{L^r}^q dt\Big)^{\frac1q}\leq \tilde{C}
\Lambda(h)\|u_0\|_{L^2}
\end{equation*}
where $\Lambda(h)=h^{-(\alpha+\sigma)(\frac12-\frac1p)+\frac1q}$.
\end{proposition}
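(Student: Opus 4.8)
The plan is to run the classical $TT^*$ argument together with the endpoint machinery of Keel--Tao \cite{KT}, carrying the parameter $h$ through every estimate. Writing $Tu_0:=U(\cdot)u_0$, the desired inequality $\|Tu_0\|_{L^q(I;L^p)}\le\tilde C\Lambda(h)\|u_0\|_{L^2}$ is, by duality (the $\sigma$-finiteness of $X$ and weak measurability of $U$ making the adjoint and Fubini legitimate), equivalent to
\begin{equation*}
\|TT^*G\|_{L^q(I;L^p)}\le \tilde C^2\Lambda(h)^2\|G\|_{L^{q'}(I;L^{p'})},\qquad TT^*G(t)=\int_I U(t)U(s)^*G(s)\,ds.
\end{equation*}
Composing the two hypotheses gives $\|U(t)U(s)^*\|_{L^2\to L^2}\le C^2$ and $\|U(t)U(s)^*\|_{L^1\to L^\infty}\le C^2h^{-\alpha}(h+|t-s|)^{-\sigma}$, and Riesz--Thorin interpolation then yields, for every $2\le p\le\infty$,
\begin{equation*}
\|U(t)U(s)^*f\|_{L^p}\le C^2\,h^{-\alpha(1-2/p)}\,(h+|t-s|)^{-\beta}\,\|f\|_{L^{p'}},\qquad \beta:=\sigma\Big(1-\tfrac2p\Big).
\end{equation*}
Inserting this into the formula for $TT^*G$ and using Minkowski's integral inequality in the space variable reduces everything to the one-dimensional convolution bound $\big\|(h+|\cdot|)^{-\beta}\ast g\big\|_{L^q(\R)}\lesssim h^{2/q-\beta}\|g\|_{L^{q'}(\R)}$; note that the admissibility condition $\frac1q+\frac\sigma p\le\frac\sigma2$ is exactly $\beta\ge\frac2q$, and that $\Lambda(h)^2=h^{-\alpha(1-2/p)+2/q-\beta}$, so this is precisely the estimate required.

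For the bulk of the admissible range, $\beta>\tfrac2q$, I would simply invoke Young's inequality on $\R$ with exponents $\tfrac q2,q',q$: since $\int_\R(h+|\tau|)^{-\beta q/2}\,d\tau\simeq h^{\,1-\beta q/2}$, the integral converging exactly because $\beta q/2>1$, one has $\|(h+|\cdot|)^{-\beta}\|_{L^{q/2}(\R)}\simeq h^{\,2/q-\beta}$, which is the convolution bound (for $q=\infty$ this is the elementary $L^\infty\ast L^1$ estimate). On the sharp line $\beta=\tfrac2q$ with $2<q<\infty$, Young just fails, but one may discard the regularization, $(h+|\cdot|)^{-\beta}\le|\cdot|^{-2/q}$, and quote the one-dimensional Hardy--Littlewood--Sobolev inequality, which gives the bound with no loss in $h$ (consistently, $h^{2/q-\beta}=1$ here). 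In either case one recovers $\|TT^*\|\le\tilde C^2 h^{-\alpha(1-2/p)+2/q-\beta}=\tilde C^2\Lambda(h)^2$, which is the claim.

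The only case not handled by Young or Hardy--Littlewood--Sobolev, and the only genuine difficulty, is the endpoint $q=2$ lying on the sharp line, i.e. $\beta=1$; this forces $\sigma>1$ and $p=\tfrac{2\sigma}{\sigma-1}<\infty$, the excluded triple $(2,\infty,1)$ being precisely the borderline $\sigma=1$. Here one must reproduce the Keel--Tao endpoint argument \cite{KT}: decompose $TT^*=\sum_k T_k$ according to $h+|t-s|\simeq 2^k$ (with one extra piece for $|t-s|\lesssim h$), establish for each $k$ the off-diagonal bilinear $L^2$-estimates on $T_k$ — carrying the factors $h^{-\alpha(1-2/b)}$, which are inert in the time variable — for exponents near $(q,p)=(2,p)$, with a dyadic gain $2^{-k\varepsilon}$ whose exponent $\varepsilon$ changes sign as one crosses this point, then interpolate these bilinearly and sum the resulting Lorentz-space bounds. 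Since the $h$-powers depend only on the spatial exponents and are constant in $k$, they pass through unchanged and reproduce $\|TT^*\|\le\tilde C^2\Lambda(h)^2$. The main obstacle is exactly this last step: off $q=2$ the proof is the routine $TT^*$/Young/Hardy--Littlewood--Sobolev scheme (the only real care being uniformity in $h\in(0,1]$ and the bookkeeping of the $h$-exponents), whereas at the endpoint one has to replay Keel--Tao's bilinear real interpolation and atomic summation with the extra $h$-dependent constants in tow.
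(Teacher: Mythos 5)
Your argument is correct, but note that the paper does not actually prove this proposition: its ``proof'' is a two-line remark that the statement is the semiclassical analogue of the Strichartz estimates in \cite{KTZ,Zworski}, with a reference to \cite{Zhang} for the details. What you wrote is essentially the standard argument behind those references: the $TT^*$ reduction, Riesz--Thorin giving $\|U(t)U(s)^*\|_{L^{p'}\to L^p}\lesssim h^{-\alpha(1-\frac2p)}(h+|t-s|)^{-\beta}$ with $\beta=\sigma(1-\frac2p)$, Young's inequality when $\beta>\frac2q$ (where the truncation at $h$ is exactly what produces the factor $h^{\frac2q-\beta}$), Hardy--Littlewood--Sobolev on the sharp line for $2<q<\infty$, and the Keel--Tao endpoint machinery \cite{KT} at $(q,p)=\big(2,\tfrac{2\sigma}{\sigma-1}\big)$; your bookkeeping $\Lambda(h)^2=h^{-\alpha(1-\frac2p)}h^{\frac2q-\beta}$ is right, and your identification of the excluded triple $(2,\infty,1)$ with the borderline $\sigma=1$ is the correct reading of the hypothesis. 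One simplification you may as well use at the endpoint (indeed on the whole sharp line $\frac1q=\sigma(\frac12-\frac1p)$): since $(h+|t-s|)^{-\sigma}\le|t-s|^{-\sigma}$, the truncation gives no gain there, so instead of re-running the Keel--Tao bilinear real interpolation with $h$-dependent constants one can apply their theorem as a black box after the time rescaling $t\mapsto\lambda t$ with $\lambda=(Ch^{-\alpha})^{1/\sigma}$, which normalizes the dispersive constant and returns precisely the factor $h^{-\frac{\alpha}{\sigma q}}=\Lambda(h)$ at sharp pairs, while your Young-inequality computation already covers all non-sharp pairs. Finally, the $L^r$ in the conclusion is the paper's typo for $L^p$ (and the constant depends on $p$, not on an undefined $r$), which your formulation implicitly corrects.
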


\begin{proof}
This is an analogue of the semiclassical Strichartz
estimates for Schr\"odinger in \cite{KTZ, Zworski}. We refer to \cite{Zhang} for the proof.
\end{proof}

Now we prove the Strichartz estimates \eqref{stri:w}. Recall $\varphi$ in \eqref{LP-dp} and
Littlewood-Paley frequency cutoff $\varphi_k(\sqrt{H_{\alpha, B_0}})$, for each $k\in\Z$, we
define
\begin{equation*}
u_k(t,\cdot)=\varphi_k(\sqrt{H_{\alpha, B_0}})u(t,\cdot).
\end{equation*}
where $u(t,x)$ is the solution of \eqref{eq:wave}.
Then,  for each $k\in\Z$,  $u_k(t,x)$ solves the Cauchy problem
\begin{equation*}
\partial_{t}^2u_k+H_{\alpha, B_0} u_k=0, \quad u_k(0)=f_k(z),
~\partial_tu_k(0)=g_k(z),
\end{equation*}
where $f_k=\varphi_k(\sqrt{H_{\alpha, B_0}})u_0$ and
$g_k=\varphi_k(\sqrt{H_{\alpha, B_0}})u_1$. Since $(q, p)\in \Lambda_s^W$ in definition \ref{ad-pair}, then $q, p\geq2$. Thus,  by using the square-function
estimates \eqref{square} and the Minkowski inequality, we obtain
\begin{equation}\label{LP}
\|u(t,x)\|_{L^q(I;L^p(\R^2))}\lesssim
\Big(\sum_{k\in\Z}\|u_k(t,x)\|^2_{L^q(I;L^p(\R^2))}\Big)^{\frac12},
\end{equation}
where $I=[0,T]$.
Denote the half-wave propagator $U(t)=e^{it\sqrt{H_{\alpha, B_0}}}$, then
we write
\begin{equation}\label{sleq}
\begin{split}
u_k(t,z)
=\frac{U(t)+U(-t)}2f_k+\frac{U(t)-U(-t)}{2i\sqrt{H_{\alpha, B_0}}}g_k.
\end{split}
\end{equation}
By using \eqref{LP} and \eqref{sleq}, we complete the proof of \eqref{stri:w} after taking summation in $k\in\Z$ if we could prove
\begin{proposition}\label{lStrichartz} Let
$f=\varphi_k(\sqrt{H_{\alpha, B_0}})f$ for $\varphi_k$ in \eqref{LP-dp} and $k\in\Z$. Then
\begin{equation}\label{lstri}
\|U(t)f\|_{L^q(I;L^p(\R^2))}\leq C_T
2^{ks}\|f\|_{L^2(\R^2)},
\end{equation}
where the admissible pair $(q,p)\in [2,+\infty]\times [2,+\infty)$ and $s$ satisfy
\eqref{adm} and \eqref{scaling}.
\end{proposition}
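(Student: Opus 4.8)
The plan is to derive \eqref{lstri} from the abstract semiclassical Strichartz estimate of Proposition \ref{prop:semi}, applied after frequency localization at $2^{k}$, with semiclassical parameter $h=2^{-k}$, dispersion exponent $\sigma=\tfrac12$ (the two-dimensional wave decay) and gain exponent $\alpha=\tfrac32$. Fix a fattened cutoff $\tilde\varphi\in C_c^\infty((0,\infty))$ with $\tilde\varphi\equiv1$ on $\mathrm{supp}\,\varphi$, so that $\tilde\varphi\varphi=\varphi$ and $\tilde\varphi_k(\sqrt{H_{\alpha,B_0}})f=f$ whenever $f=\varphi_k(\sqrt{H_{\alpha,B_0}})f$ (here $\tilde\varphi_k=\tilde\varphi(2^{-k}\,\cdot\,)$), and apply Proposition \ref{prop:semi} to
\begin{equation*}
U(t):=e^{it\sqrt{H_{\alpha,B_0}}}\,\tilde\varphi_k(\sqrt{H_{\alpha,B_0}}),\qquad t\in I .
\end{equation*}
For such $f$ one has $U(t)f=e^{it\sqrt{H_{\alpha,B_0}}}f$, so it suffices to bound $\|U(t)f\|_{L^q(I;L^p)}$.

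First I would verify the two inputs \eqref{md}. The $L^{2}\!\to\!L^{2}$ bound follows from the spectral theorem, exactly as in the ``Case 1'' computation of Section \ref{sec:decay}: $\|U(t)g\|_{L^2}=\|\tilde\varphi_k(\sqrt{H_{\alpha,B_0}})g\|_{L^2}\le\|\tilde\varphi\|_{L^\infty}\|g\|_{L^2}$, uniformly in $t$ and $k$. For the dispersive bound, observe $U(t)U(s)^{*}=\tilde\varphi_k^{\,2}(\sqrt{H_{\alpha,B_0}})\,e^{i(t-s)\sqrt{H_{\alpha,B_0}}}$, and $\tilde\varphi^{2}$ is again a $C_c^\infty((0,\infty))$ bump so the microlocalized decay estimate \eqref{est: mic-decay2} applies equally to it; combining this with the uniform bound $\|\tilde\varphi_k(\sqrt{H_{\alpha,B_0}})\|_{L^1\to L^1}\lesssim1$ (which comes from the kernel estimate used to prove the Bernstein inequality), using that \eqref{est: mic-decay2} holds for every $k\in\Z$ with the single constant $C_T$ (by the reduction at the end of Section \ref{sec:decay}), and using that the $L^1\to L^\infty$ bound is even in the time variable (self-adjointness of $H_{\alpha,B_0}$), we obtain for $t,s\in I$
\begin{equation*}
\|U(t)U(s)^{*}g\|_{L^\infty(\R^2)}\le C_T\,2^{2k}\big(1+2^{k}|t-s|\big)^{-\frac12}\|g\|_{L^1(\R^2)} .
\end{equation*}
Writing $1+2^{k}|t-s|=2^{k}\big(2^{-k}+|t-s|\big)$ recasts the right-hand side as $C_T\,h^{-3/2}\big(h+|t-s|\big)^{-1/2}\|g\|_{L^1}$, which is precisely the second line of \eqref{md} with $h=2^{-k}$, $\alpha=\tfrac32$, $\sigma=\tfrac12$.

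It then remains to match the indices. Since $\sigma=\tfrac12\ne1$ the forbidden triple $(2,\infty,1)$ does not arise, and for $(q,p)\in\Lambda^W_{s}$ the admissibility condition \eqref{adm}, $\tfrac2q\le\tfrac12-\tfrac1p$, is exactly $\tfrac1q+\tfrac{\sigma}{p}\le\tfrac{\sigma}{2}$ with $\sigma=\tfrac12$, together with $q\ge2$. Hence Proposition \ref{prop:semi} gives
\begin{equation*}
\|U(t)f\|_{L^q(I;L^p(\R^2))}\le\tilde C\,\Lambda(2^{-k})\,\|f\|_{L^2(\R^2)},\qquad
\Lambda(2^{-k})=2^{k\left[(\alpha+\sigma)\left(\frac12-\frac1p\right)-\frac1q\right]}=2^{k\left(1-\frac2p-\frac1q\right)},
\end{equation*}
using $\alpha+\sigma=2$. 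The scaling relation \eqref{scaling}, $\tfrac1q+\tfrac2p=1-s$, yields $1-\tfrac2p-\tfrac1q=s$, so $\Lambda(2^{-k})=2^{ks}$; since $U(t)f=e^{it\sqrt{H_{\alpha,B_0}}}f$ this is \eqref{lstri}, with $C_T$ absorbing $\tilde C$ and the constant from \eqref{est: mic-decay2}. Together with the square-function bound \eqref{LP} and the summation over $k\in\Z$ explained before the statement, this also proves \eqref{stri:w}.

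The only genuinely substantive ingredient here is the microlocalized dispersive estimate \eqref{est: mic-decay2}, which is already established, so the remainder is bookkeeping. The \emph{main obstacle}, such as it is, is merely to be careful about three points: the fattened cutoff $\tilde\varphi$, which is what puts $U(t)U(s)^{*}$ in the form required by \eqref{md} while still recovering $e^{it\sqrt{H_{\alpha,B_0}}}f$ on frequency-localized data; the extension of \eqref{est: mic-decay2} to time differences of either sign; and the arithmetic check that the exponent of $\Lambda(h)$ produced by Proposition \ref{prop:semi} lands exactly on the scaling line \eqref{scaling}.
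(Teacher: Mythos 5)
Your proposal is correct and follows essentially the same route as the paper: verify the two hypotheses of Proposition \ref{prop:semi} for the frequency-localized propagator (the $L^2$ bound from the spectral theorem and the dispersive bound from \eqref{est: mic-decay2}), take $h=2^{-k}$, $\alpha=\tfrac32$, $\sigma=\tfrac12$, and check that $\Lambda(2^{-k})=2^{k[2(\frac12-\frac1p)-\frac1q]}=2^{ks}$ by \eqref{scaling}. Your extra care with the fattened cutoff $\tilde\varphi$, the $L^1\to L^1$ bound, and the evenness in $t-s$ only makes explicit what the paper leaves implicit.
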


\begin{proof}
Since $f=\varphi_k(\sqrt{\mathrm{H}})f$, then $$U(t)f=\varphi_k(\sqrt{H_{\alpha, B_0}})e^{it\sqrt{H_{\alpha, B_0}}} f:=U_k f.$$
By using the spectral theorem, we see
\begin{equation*}
\|U_k(t)f\|_{L^2(\R^2)}\leq C\|f\|_{L^2(\R^2)}.
\end{equation*}
By using \eqref{est: mic-decay2}, we obtain
\begin{equation*}
\begin{split}
\|U_k(t)U_k^*(s)f\|_{L^\infty (\R^2)}&=\|U_k(t-s)f\|_{L^\infty (\R^2)}\\
&\leq C_T 2^{\frac32 k}\big(2^{-k}+|t-s|\big)^{-\frac12}\|f\|_{L^1(\R^2)},
\end{split}
\end{equation*}
Then the estimates \eqref{md}
for $U_{k}(t)$ hold for $\alpha=3/2$, $\sigma=1/2$ and
$h=2^{-k}$. Hence, Proposition \ref{prop:semi} gives
\begin{equation*}
\|U(t)f\|_{L^q(I;L^p(\R^2))}=\|U_k(t)f\|_{L^q(I;L^p(\R^2))}\leq C_T
2^{k[2(\frac12-\frac1p)-\frac1q]} \|f\|_{L^2(\R^2)}.
\end{equation*}
which implies \eqref{lstri} since $s=2(\frac12-\frac1p)-\frac1q$.
\end{proof}

\begin{center}

\end{center}

\end{document}